\newtheorem{Theo}{Theorem}[section]
\newtheorem{Prop}[Theo]{Proposition}
\newtheorem{Cor}[Theo]{Corollary}
\newtheorem{Lemma}[Theo]{Lemma}
\newtheorem{Conj}[Theo]{Conjecture}
\theoremstyle{definition}
\newtheorem{Exam}[Theo]{Example}
\newtheorem{Remark}[Theo]{Remark}
\def\mystrut(#1,#2){\vrule height #1pt depth #2pt width 0pt}
\newcommand{\rep}{{\rm rep}}
\newcommand{\rrep}{\overline{\mystrut(5,0) {\rm rep\hspace{-1pt}}}\,}
\newcommand{\rreps}{\overline{\mystrut(3.5,0) {\rm rep\hspace{-1pt}}}\,}
\newcommand{\Rep}{{\rm Rep}}
\newcommand{\Hom}{{\rm Hom}}
\newcommand{\Ext}{{\rm Ext}}
\newcommand{\N}{\mathbb{N}}
\newcommand{\Z}{\mathbb{Z}}
\newcommand{\C}{\mathcal{C}}
\newcommand{\A}{\mathbb{A}}
\def\id{\hbox{1\hskip -3pt {\sf I}}}
\def\Ga{\hbox{$\mathit\Gamma$}}
\begin{document}

\begin{abstract} Let $Q$ be a strongly locally finite quiver and denote by $\rep(Q)$ the category of locally
finite dimensional representations of $Q$ over some fixed field $k$. The main purpose of this paper is to get a better understanding of $\rep(Q)$ by means of its Auslander-Reiten quiver. To achieve this goal,
we define a category $\rrep(Q)$ which is a full, abelian and Hom-finite subcategory of $\rep(Q)$ containing all the almost
split sequences of $\rep(Q)$.  We give a complete description of the Auslander-Reiten quiver
of $\rrep(Q)$ by describing its connected components.  Finally, we
prove that these connected components are also connected components
of the Auslander-Reiten quiver of $\rep(Q)$. We end the paper by giving a conjecture describing the Auslander-Reiten components of $\rep(Q)$ that cannot be obtained as Auslander-Reiten components of $\rrep(Q)$.
\end{abstract}

\title[Auslander-Reiten quiver of an infinite quiver]{On the Auslander-Reiten quiver of the representations of an infinite quiver}

\author{Charles Paquette}
\address{Charles Paquette, Dept. of Math. and Stat., U. of New Brunswick, Fredericton, NB, E3B 5A3, Canada.}

\email{charles.paquette@usherbrooke.ca}

\maketitle

\section*{Introduction}

It follows from a result of Gabriel that any basic and finite
dimensional algebra over an algebraically closed field $k$ is given
by a quiver with relations, that is, it is a quotient of a path
algebra $kQ$ by an admissible ideal $I$ of $kQ$, where $Q$ is a
finite quiver; see, for example, \cite[Section 3, Theorem 1.9]{ARS}.
Therefore, the algebras of the form $kQ$ where $Q$ is a quiver and
$k$ is any field are of particular interest. If $Q$ is finite and
contains no oriented cycle, $kQ$ is a finite dimensional hereditary
algebra and the Auslander-Reiten theory of $kQ$ is well established;
see \cite{ASS, Ri}. In \cite{BLP}, the Auslander-Reiten theory of
$kQ$ where $Q$ is infinite but strongly locally finite is studied.
Indeed, the category $\rep^+(Q)$ consisting of the finitely
presented representations is studied by means of its
Auslander-Reiten theory. A complete description of the
Auslander-Reiten quiver of $\rep^+(Q)$ is given. There is a unique
preprojective component, some preinjective components and four types
of regular components. Dually, the category $\rep^-(Q)$ of the
finitely co-presented representations has a well-understood
Auslander-Reiten theory and its Auslander-Reiten quiver is
completely described. In \cite{Pa}, it is shown that an almost split
sequence in $\rep(Q)$ necessarily starts with a finitely
co-presented representation and ends with a finitely presented one.
Therefore, it seems that the categories $\rep^+(Q)$ and $\rep^-(Q)$ somehow control the Auslander-Reiten theory of the whole category $\rep(Q)$. The main goal of this paper is to show that this is indeed the case. One can construct a full
subcategory $\rrep(Q)$ of $\rep(Q)$ which is abelian and Hom-finite and
contains the Auslander-Reiten theory of $\rep(Q)$ (and hence the
Auslander-Reiten theory of both $\rep^+(Q)$ and $\rep^-(Q)$). We give a complete description of the Auslander-Reiten quiver of $\rrep(Q)$ and show how the knowledge of this helps one to get a partial
description of the irreducible morphisms in the whole category $\rep(Q)$. In particular, we provide all the Auslander-Reiten components of $\rep(Q)$ for which the translation acts non-trivially.

\smallskip

In the first section, we provide some background on representations
of quivers and recall some key facts concerning the categories
$\rep^+(Q), \rep^-(Q)$ and $\rep(Q)$ and the existence of almost
split sequences in $\rep(Q)$.  In Section $2$, we define and study
finite extensions, and provide some properties of these extensions.
 In Section $3$, we define a full subcategory $\rrep(Q)$ of $\rep(Q)$ which
consists of the finite extensions of objects in $\rep^-(Q)$ by those
in $\rep^+(Q)$. The additive $k$-category $\rrep(Q)$ is shown to be
abelian and Hom-finite.  We also provide a result
saying that each representation in $\rrep(Q)$ is built from a
projective representation in $\rep^+(Q)$, an injective
representation in $\rep^-(Q)$ and a finite dimensional
representation.  In Section $4$, we show how to control the domain
and co-domain of an irreducible morphism in $\rrep(Q)$. In
particular, if $M \to N$ is irreducible in $\rrep(Q)$ with both
$M,N$ indecomposable, then either $M \in \rep^-(Q)$ or $N \in
\rep^+(Q)$. In Section $5$, we give a complete description of the
Auslander-Reiten quiver of $\rrep(Q)$.  If $Q$ is connected, there
is a unique preprojective component, a unique preinjective component
and four possible types of regular components. Finally, in Section
$6$, we describe partially the Auslander-Reiten quiver of $\rep(Q)$,
by showing that the connected components of the Auslander-Reiten
quiver of $\rrep(Q)$ are connected components of the
Auslander-Reiten quiver of $\rep(Q)$. We propose a conjecture for
the shapes of the other connected components of the Auslander-Reiten
quiver of $\rep(Q)$.

\section{Representations of quivers}

Let $Q=(Q_0,Q_1)$ be a \emph{strongly locally finite} quiver, that
is, a locally finite quiver for which the number of paths between
any given pair $x,y$ of vertices of $Q$ is finite. This last
property will be referred to as $Q$ being \emph{interval-finite}.
Throughout the paper, we fix $k$ to be any field. Recall that a
representation $M$ of $Q$ (over $k$) is defined by the following
data. For each $x \in Q_0$, $M(x)$ is a $k$-vector space and for
each arrow $\alpha: x \to y \in Q_1$, $M(\alpha) : M(x) \to M(y)$ is
a $k$-linear map.  If $M,N$ are representations in $\Rep(Q)$, then a
map $f: M \to N$ is a family $\{f_x: M(x) \to N(x) \mid x \in Q_0\}$
of $k$-linear maps such that for any arrow $\alpha : x \to y$ in
$Q$, we have $f_yM(\alpha)=N(\alpha)f_x$. The category of all
representations of $Q$ over $k$ is denoted by $\Rep(Q)$. Indeed,
$\Rep(Q)$ is the category of all $k$-linear covariant functors from
the path category $kQ$ to the category of all $k$-vector spaces. The
importance of the category $\Rep(Q)$ relies on the fact that it is
equivalent to the category ${\rm Mod}A$ of all unitary left
$A$-module, where $A$ is the path algebra $kQ$ (and has no identity
if $Q$ is infinite). Here, a left $A$-module $M$ is \emph{unitary} if $AM=M$. A representation $M \in \Rep(Q)$ is said to be
\emph{locally finite dimensional} if $M(x)$ is finite dimensional
for all $x \in Q_0$; and \emph{finite dimensional} if
$\textstyle{\sum}_{x \in Q_0}M(x)$ is finite dimensional. The full
subcategory of $\Rep(Q)$ of all locally finite dimensional
representations of $Q$ is denoted by $\rep(Q)$. In some sense, the
objects in $\rep(Q)$ are close from being finite dimensional and
they have nice properties. For example, every indecomposable object
in $\rep(Q)$ has a local endomorphism algebra; see \cite{GR}, and
there exists a (pointwise) duality $D_Q: \rep(Q) \to \rep(Q^{\rm
\,op})$ where $Q^{\rm \,op}$ is the opposite quiver of $Q$; see
\cite{BLP}. It is also shown in \cite[Section 3.6]{GR} that
$\Rep(Q)$ (and $\rep(Q)$) is abelian and hereditary, that is,
$\Ext_{\Rep(Q)}^2(-,-)$ vanishes. Note however that, in general,
$\rep(Q)$ is not Hom-finite.

\smallskip

Let us now introduce two important families of representations in
$\rep(Q)$. First, given $x,y \in Q_0$, let us denote by $Q(x,y)$ the
(finite) set of paths from $x$ to $y$ in $Q$. For $a \in Q_0$, let
$P_a$ denote the representation defined as follows. For $x \in Q_0$,
$P_a(x) = k\langle Q(a,x) \rangle$ and for an arrow $\alpha: x \to
y$, $P_a(\alpha)$ is the right multiplication by $\alpha$. Since $Q$
is interval-finite, $P_a \in \rep(Q)$. It is easy to show that $P_a$
is indecomposable projective in $\rep(Q)$; see, for example,
\cite[Proposition 1.3]{BLP}. Dually, for $a \in Q_0$, denote by
$I_a$ the following representation. For $x \in Q_0$, $I_a(x) =
k\langle Q(x,a)\rangle$ and for an arrow $\alpha: x \to y$,
$I_a(\alpha)$ is the transpose of the map $kQ(y,a) \to kQ(x,a)$
which is the left multiplication by $\alpha$. Since $Q$ is
interval-finite, $I_a \in \rep(Q)$. Moreover, using the duality $D_Q$, we see that $I_a$ is
indecomposable injective in $\rep(Q)$; see also \cite{BLP}.

\smallskip

The full subcategory of $\Rep(Q)$ whose objects are the finitely
presented (finitely co-presented, respectively) representations is
denoted by $\rep^+(Q)$ ($\rep^-(Q)$, respectively).  Since $Q$ is
strongly locally finite, $\rep^+(Q)$ and $\rep^-(Q)$ are indeed full
subcategories of $\rep(Q)$. Since $\rep(Q)$ is hereditary, $M \in
\rep^+(Q)$ if and only if there exists a short exact sequence
$$0 \to \textstyle{\bigoplus}_{i=s+1}^r P_{x_i} \to \textstyle{\bigoplus}_{i=1}^s P_{x_i} \to M \to 0$$
in $\rep(Q)$ where the $x_i$ are vertices in $Q$.
Similarly, $M \in \rep^-(Q)$ if and only if there exists a short
exact sequence
$$0 \to M \to \textstyle{\bigoplus}_{i=1}^{s'} I_{y_i} \to \textstyle{\bigoplus}_{i=s'+1}^{r'} I_{y_i} \to 0$$
in $\rep(Q)$ where the $y_i$ are vertices in $Q$. The
indecomposable projective representations in $\rep^+(Q)$, up to
isomorphisms, are the $P_x$ for $x \in Q_0$. Similarly, the
indecomposable injective representations in $\rep^-(Q)$, up to
isomorphisms, are the $I_x$ for $x \in Q_0$.  However, $\rep(Q)$ may
have indecomposable projective representations which are not
isomorphic to the $P_x$, and dually, may have indecomposable
injective representations which are not isomorphic to the $I_x$.

\smallskip

Let us now introduce more definitions. For this purpose, fix $M \in
\rep(Q)$. The \emph{support} of $M$, written as ${\rm supp}(M)$, is
the full subquiver of $Q$ generated by the vertices $x \in Q_0$ for
which $M(x) \ne 0$. If $M \in \rep^+(Q)$, then it follows from the
definition of $\rep^+(Q)$ and the fact that $Q$ is strongly locally
finite that ${\rm supp}(M)$ is \emph{top-finite}, that is, contains
finitely many source vertices and every vertex $x$ in ${\rm
supp}(M)$ is a successor of one such source vertex. Dually, if $M
\in \rep^-(Q)$, then ${\rm supp}(M)$ is \emph{socle-finite}, that
is, contains finitely many sink vertices and every vertex $x$ in
${\rm supp}(M)$ is a predecessor of one such sink vertex. Observe
that a socle-finite quiver may contain a \emph{left-infinite path},
that is, a path of the form
$$\cdots \to \circ \to \circ \to \circ \to \circ$$
but does not contain a \emph{right-infinite path}, that is, a path
of the form
$$\circ \to \circ \to \circ \to \circ \to \cdots.$$
Dually, a top-finite quiver may contain a right-infinite path but
does not contain left-infinite paths. It is easy to see that a full
subquiver of $Q$ which is top-finite and socle-finite needs to be
finite.  In particular, $$\rep^b(Q):=\rep^+(Q) \cap \rep^-(Q)$$
consists of all the finite dimensional representations of $Q$. If
$x$ is a vertex in $Q$, then one says that $M$ is \emph{supported}
by $x$ if $M(x) \ne 0$, or equivalently, if $x \in
{\rm supp}(M)$. 
Finally, if $\alpha \in Q_1$, we say that $M$
is \emph{supported} by $\alpha$ 
if $M(\alpha) \ne 0$. Note that if $\alpha$ is an arrow in ${\rm
supp}(M)$, then it does not necessarily mean that $M$ is supported
by $\alpha$.

\smallskip

In this paper, we shall use many basic results concerning
$\rep^+(Q)$ and $\rep^-(Q)$ appearing in \cite{BLP}.  This is our
main reference.  We shall, however, recall the main definitions and
results we need at the appropriate place in the sequel. Here are
some of them, that we will use freely. Both $\rep^+(Q)$ and
$\rep^-(Q)$ are Hom-finite hereditary abelian
$k$-categories.  For any indecomposable non-projective
representation $X$ in $\rep^+(Q)$, there exists an almost split
sequence
$$0 \to X' \to E \to X \to 0$$
in $\rep(Q)$ where $X' \in \rep^-(Q)$.  This sequence is almost
split in $\rep^-(Q)$ if and only if $X$ is finite dimensional.
Dually, for any indecomposable non-injective representation $Y$ in
$\rep^-(Q)$, there exists an almost split sequence
$$0 \to Y \to E' \to Y' \to 0$$
in $\rep(Q)$ where $Y' \in \rep^+(Q)$.  This sequence is almost
split in $\rep^+(Q)$ if and only if $Y$ is finite dimensional.


\section{Finite extensions and their properties}

In this section, we define the main category (a subcategory of
$\rep(Q)$) of interest of the paper. As we will see, this category
contains all the almost split sequences of $\rep(Q)$ and has a nice
Auslander-Reiten theory. Let $L,M,N \in \rep(Q)$. An
\emph{extension} of $N$ by $L$ is a short exact sequence
$$0 \to L \to M \to N \to 0$$
in $\rep(Q)$, where $M$ is called an \emph{extension-representation}
of $N$ by $L$. Two extensions $\eta, \gamma$ of $N$ by $L$ are
equivalent if there exists a commutative diagram
$$\xymatrixrowsep{15pt}\xymatrixcolsep{15 pt}\small\xymatrix{\eta\,:\;\; 0 \ar[r] &L \ar[r]\ar@{=}[d]& M \ar[r]\ar[d] & N \ar[r]\ar@{=}[d] & 0 \\
\gamma\,: \;\;0 \ar[r] &L \ar[r]& M' \ar[r] & N \ar[r] & 0 }$$
\normalsize in $\rep(Q)$. If $M$ is an extension-representation of
$N$ by $L$ such that all but a finite number of arrows supporting
$M$ are arrows supporting $L\oplus N$, then $M$ is said to be a
\emph{finite extension-representation} of $N$ by $L$, while the
corresponding short exact sequence is a \emph{finite extension} of
$N$ by $L$. Recall that $\Ext(N,L)$ is the set of all extensions of
$N$ by $L$ modulo the equivalence relation given above. It is an
abelian group under the Baer sum; see \cite{Mit}. Observe that if
$\eta,\gamma$ are equivalent extensions of $N$ by $L$, then $\eta$
is finite if and only if $\gamma$ is finite. Moreover, one can
easily check that if $\gamma,\eta \in\Ext(N,L)$ are finite
extensions, then their Baer sum $\gamma + \eta$ is also a finite
extension. Hence, the subset $\mathcal{E}(N,L)$ of $\Ext(N,L)$ of
all equivalence classes of finite extensions of $N$ by $L$ is a subgroup of $\Ext(N,L)$.
The following lemma tells us how to recognize such a finite
extension.

\begin{Lemma} \label{RecognizeFiniteExt}
Let $M$ be an extension-representation of $N$ by $L$ and $S$ be the
arrows $x \to y$ in $Q$ with $x \in {\rm supp}(N)$ and $y \in {\rm
supp}(L)$. Then $M$ is a finite extension-representation if and only
if $M(\alpha)=0$ for all but a finite number of $\alpha \in S$.
\end{Lemma}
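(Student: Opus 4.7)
The plan is to reduce the statement to a vertex-wise vector-space splitting of the short exact sequence. For each vertex $x \in Q_0$ I pick a $k$-linear section $\sigma_x : N(x) \to M(x)$ of the surjection $M(x) \twoheadrightarrow N(x)$, which identifies $M(x) = L(x) \oplus N(x)$ as a vector space (though not as a representation). Under these identifications, for each arrow $\alpha : x \to y$ the map $M(\alpha)$ takes the block upper-triangular form
\[
M(\alpha) = \begin{pmatrix} L(\alpha) & \phi_\alpha \\ 0 & N(\alpha) \end{pmatrix}
\]
for a uniquely determined $k$-linear map $\phi_\alpha : N(x) \to L(y)$, and the split extension $L \oplus N$ corresponds to the same block form with all $\phi_\alpha \equiv 0$.

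Reading off supports from this form, $\alpha$ supports $L \oplus N$ iff $L(\alpha) \neq 0$ or $N(\alpha) \neq 0$, while $\alpha$ supports $M$ iff one of $L(\alpha)$, $N(\alpha)$, $\phi_\alpha$ is non-zero. Hence the arrows supporting $M$ but not $L \oplus N$ are exactly those with $L(\alpha) = N(\alpha) = 0$ and $\phi_\alpha \neq 0$. Since $\phi_\alpha : N(x) \to L(y)$ can be non-zero only when both $N(x) \neq 0$ and $L(y) \neq 0$, every such arrow lies in $S$, and on it the block form collapses to $M(\alpha) = \phi_\alpha$; in particular $M(\alpha) \neq 0 \iff \phi_\alpha \neq 0$ on such arrows. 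The $\Leftarrow$ direction then follows at once: if $M(\alpha) = 0$ for all but finitely many $\alpha \in S$, the discrepancy set is contained in the finite set $\{\alpha \in S : M(\alpha) \neq 0\}$, and $M$ is a finite extension-representation.

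For the converse I argue by contraposition. Suppose infinitely many arrows $\alpha \in S$ satisfy $M(\alpha) \neq 0$. If infinitely many of them have $L(\alpha) = N(\alpha) = 0$, the block analysis above produces an infinite discrepancy set and we are done. Otherwise cofinitely many such $\alpha$ have $L(\alpha) \neq 0$ or $N(\alpha) \neq 0$, so that $x$ or $y$ lies in $\mathrm{supp}(L) \cap \mathrm{supp}(N)$; using interval- and local-finiteness of $Q$ (and in the setting where $L, N$ lie in $\rep^\pm(Q)$ the top-/socle-finiteness of their supports), this intersection is forced to be a finite subquiver, so only finitely many arrows of $Q$ have an endpoint in it — a contradiction. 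The main obstacle is exactly this last bookkeeping: ruling out that the infinitely many offending arrows in $S$ all hide behind a non-zero $L(\alpha)$ or $N(\alpha)$ and thereby escape the discrepancy set. Everything else in the argument is a direct unpacking of the block form.
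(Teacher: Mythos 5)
Your block-triangular setup is sound, and your $\Leftarrow$ direction is correct: the arrows supporting $M$ but not $L\oplus N$ are exactly those with $L(\alpha)=N(\alpha)=0$ and $\phi_\alpha\neq 0$, each of which lies in $S$ and satisfies $M(\alpha)=\phi_\alpha\neq 0$, so the discrepancy set is contained in $\{\alpha\in S: M(\alpha)\neq 0\}$. This is, in substance, exactly the paper's argument (the paper phrases it by ruling out $x\in{\rm supp}(L)\setminus{\rm supp}(N)$ and $y\in{\rm supp}(N)\setminus{\rm supp}(L)$ rather than via an explicit splitting), and in fact the paper's printed proof stops once this containment is established, i.e.\ it only proves the $\Leftarrow$ direction.

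The gap is in your converse, at the step "using interval- and local-finiteness of $Q$ \dots this intersection is forced to be a finite subquiver." Interval-finiteness and local finiteness alone do \emph{not} force ${\rm supp}(L)\cap{\rm supp}(N)$ to be finite for arbitrary $L,N\in\rep(Q)$; your parenthetical about $L,N$ lying in $\rep^+(Q)$ and $\rep^-(Q)$ is carrying all the weight, and that hypothesis is not part of the lemma's statement. Indeed the $\Rightarrow$ direction is false in the stated generality: take $Q$ of type $\A_\infty$ with linear orientation, let $L=N$ be the representation with $L(x)=k$ at every vertex and all maps the identity, and let $M=L\oplus N$ be the split extension. Then the discrepancy set is empty, so $M$ is a finite extension-representation, yet $S$ consists of all arrows and $M(\alpha)\neq 0$ for every one of them. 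So your contraposition cannot be completed without an added hypothesis. That said, the converse \emph{is} needed later (Lemma \ref{FiniteExtLemma} uses both directions), and in every application one of $L,N$ lies in $\rep^+(Q)$ and the other in $\rep^-(Q)$, so ${\rm supp}(L)\cap{\rm supp}(N)$ is simultaneously top- and socle-finite, hence finite, and only finitely many arrows of $Q$ touch it; under that hypothesis your argument for the converse is correct and supplies a direction the paper's own proof omits. You should state the extra hypothesis explicitly rather than leave it in a parenthesis.
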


\begin{proof}
Let $E$ be the set of all arrows in $Q$ supporting $M$ but not
supporting $L \oplus N$, and let $\alpha : x \to y$ with $\alpha \in
E$. If $x \in {\rm supp}(L) \backslash {\rm supp}(N)$, then
$M(x)=L(x)$. Moreover, $L$ being a sub-representation of $M$ implies
that $L(\alpha) \ne 0$, a contradiction. Hence,  $x \not \in {\rm
supp}(L) \backslash {\rm supp}(N)$. Similarly, $y \not \in {\rm
supp}(N) \backslash {\rm supp}(L)$. In particular, since ${\rm
supp}(M) = {\rm supp}(L) \cup {\rm supp}(N)$, $x \in {\rm supp}(N)$
and $y \in {\rm supp}(L)$.
\end{proof}

Let $\rrep(Q)$ be the full subcategory of $\rep(Q)$ with objects $M$
such that $M$ is a finite extension-representation of $N$ by $L$ for
some $L \in \rep^+(Q)$ and $N \in \rep^-(Q)$. We will use this category to get a better understanding of the Auslander-Reiten theory of $\rep(Q)$. Observe that Auslander has defined a similar category in \cite{Aus}, for a similar purpose but in a different setting. Given a noetherian algebra $\Lambda$, Auslander defined ${\rm arno}(\Lambda)$ as the full subcategory of the module category whose objects are the middle terms of the short exact sequences of the form $0 \to L \to M \to N \to 0$ with $L$ an artian $\Lambda$-module and $N$ a noetherian $\Lambda$-module. Note that in general, a finitely presented representation is not artinian and, more importantly, is not noetherian; see \cite{RVDB}. Hence our definition is slightly different.  When $\rep^+(Q)$ is noetherian and $\rep^-(Q)$ is artinian, then Lemma \ref{Ext+-} below implies that our category $\rrep(Q)$ contains all representations occurring as a middle term of a short exact sequence of the form $0 \to L \to M \to N \to 0$ with $L$ an artian representation and $N$ a noetherian representation.

\begin{Lemma} \label{FiniteExtLemma}
Let $M$ be a representation in  $\rrep(Q)$.  Then any short exact
sequence $0 \to L \to M \to N \to 0$ with $L \in \rep^+(Q)$ and $N
\in \rep^-(Q)$ is a finite extension.
\end{Lemma}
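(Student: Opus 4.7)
The plan is to verify the finite-extension criterion of Lemma \ref{RecognizeFiniteExt} for the given sequence by comparing it with a known finite extension $0 \to L_0 \to M \to N_0 \to 0$ afforded by the hypothesis $M \in \rrep(Q)$. The idea is that the supports of $L$ and $L_0$ (and also those of $N$ and $N_0$) differ only on a finite set of vertices, from which the conclusion of Lemma \ref{RecognizeFiniteExt} applied to the $(L_0, N_0)$-extension will transfer to the $(L, N)$-extension.

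The first and main step is to show that $L/K$ and $L_0/K$ are finite-dimensional, where $K := L \cap L_0$, viewed as a subrepresentation of $M$. The inclusion $L \hookrightarrow M$ composed with the projection $M \twoheadrightarrow M/L_0 = N_0$ has kernel $K$, so $L/K$ embeds into $N_0$; hence ${\rm supp}(L/K) \subseteq {\rm supp}(L) \cap {\rm supp}(N_0)$. Since ${\rm supp}(L)$ is top-finite and ${\rm supp}(N_0)$ is socle-finite, every vertex in this intersection lies on a path in $Q$ from one of the finitely many sources of ${\rm supp}(L)$ to one of the finitely many sinks of ${\rm supp}(N_0)$. Interval-finiteness of $Q$ bounds the number of such paths, and each path has finitely many vertices, so the intersection is finite. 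Combined with the local finite-dimensionality of $L$, this makes $L/K$ finite-dimensional. A symmetric argument, using $L_0/K \hookrightarrow M/L = N$, shows that $L_0/K$ is also finite-dimensional.

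The second step is bookkeeping of supports. From $0 \to K \to L \to L/K \to 0$ and its analogue for $L_0$, one obtains ${\rm supp}(L) \triangle {\rm supp}(L_0) \subseteq {\rm supp}(L/K) \cup {\rm supp}(L_0/K)$, which is finite. From the exact sequences $0 \to L/K \to N_0 \to M/(L+L_0) \to 0$ and $0 \to L_0/K \to N \to M/(L+L_0) \to 0$, one analogously deduces that ${\rm supp}(N) \triangle {\rm supp}(N_0) \subseteq {\rm supp}(L/K) \cup {\rm supp}(L_0/K)$ is finite.

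Finally, let $S$ and $S_0$ be the arrow sets from Lemma \ref{RecognizeFiniteExt} for the sequences $(L,N)$ and $(L_0,N_0)$, respectively. Any arrow in $S \triangle S_0$ is incident to a vertex in ${\rm supp}(L) \triangle {\rm supp}(L_0)$ or in ${\rm supp}(N) \triangle {\rm supp}(N_0)$; local finiteness of $Q$ then makes $S \triangle S_0$ finite. Since the $(L_0,N_0)$-extension is finite, $M(\alpha) = 0$ for all but finitely many $\alpha \in S_0$ by Lemma \ref{RecognizeFiniteExt}, so the same holds on $S$, and the converse direction of Lemma \ref{RecognizeFiniteExt} yields the result. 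The main hurdle is the first step, where the interval-finiteness of $Q$ is the crucial ingredient for controlling the overlap between a top-finite and a socle-finite subquiver.
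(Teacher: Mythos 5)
Your proof is correct and follows essentially the same route as the paper's: both compare the given sequence with the finite extension guaranteed by $M \in \rrep(Q)$, show that the supports of the corresponding sub- and quotient terms differ in only finitely many vertices (the key point in each case being that a top-finite support meets a socle-finite one in a finite set, by interval-finiteness), and conclude via Lemma \ref{RecognizeFiniteExt}. The only difference is presentational: you route the support comparison through $K = L \cap L_0$ and exact sequences, whereas the paper argues directly on set differences of supports and phrases the finiteness as a contradiction with an infinite path.
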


\begin{proof}
By assumption, there exists a finite extension
$$\xymatrixcolsep{15 pt}\xymatrix{0 \ar[r] & L' \ar[r] & M \ar[r] & N' \ar[r] & 0}$$
with $L' \in \rep^+(Q)$ and $N' \in \rep^-(Q)$. Let
$$\xymatrixcolsep{15 pt}\xymatrix{0 \ar[r] & L \ar[r] & M \ar[r] & N \ar[r] & 0}$$
be an extension with $L \in \rep^+(Q)$ and $N \in \rep^-(Q)$.
Observe that $${\rm supp}(L') \backslash {\rm supp}(L) \subseteq
{\rm supp}(N).$$  We claim that ${\rm supp}(L') \backslash {\rm
supp}(L)$ is finite.  Suppose the contrary. Since ${\rm supp}(N)$ is
socle-finite, there exists a left-infinite path $p$ in ${\rm
supp}(N)$ ending at $y \in Q_0$ such that infinitely many vertices
of $p$, say $\{x_i\}_{i \ge 1}$, lie in ${\rm supp}(L') \backslash
{\rm supp}(L)$. Since ${\rm supp}(L')$ is top-finite, infinitely
many of the $x_i$ are successor of a fixed vertex $x$ in ${\rm
supp}(L')$. Therefore, there are infinitely many vertices
$\{y_i\}_{i \ge 1}$ such that, for $i \ge 1$, there is a path from
$x$ to $y$ passing through $y_i$.  This contradicts the fact that
$Q$ is interval-finite.  Hence, ${\rm supp}(L') \backslash {\rm
supp}(L)$ is finite. Similarly, ${\rm supp}(L) \backslash {\rm
supp}(L')$ is finite. In a dual way, we can show that ${\rm supp}(N)
\backslash {\rm supp}(N')$ and ${\rm supp}(N') \backslash {\rm
supp}(N)$ are finite. Therefore, all but a finite number of arrows
starting at a vertex in ${\rm supp}(N)$ and ending at a vertex in
${\rm supp}(L)$ are arrows starting at a vertex in ${\rm supp}(N') $
and ending at a vertex in ${\rm supp}(L')$. The result follows
immediately from Lemma \ref{RecognizeFiniteExt}.
\end{proof}

\begin{Lemma} \label{Ext+-}
Let $X \in \rep^+(Q)$ and $Y \in \rep^-(Q)$.  Then $\Ext(X,Y) =
\mathcal{E}(X,Y)$.
\end{Lemma}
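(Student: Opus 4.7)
My plan is to reduce the claim to a purely combinatorial statement about the quiver, then derive a contradiction from interval-finiteness.

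The strategy is as follows. Take an arbitrary extension
$$0 \to Y \to M \to X \to 0$$
with $X \in \rep^+(Q)$ and $Y \in \rep^-(Q)$, and let $S$ be the set of arrows $\alpha : x \to y$ in $Q$ with $x \in \mathrm{supp}(X)$ and $y \in \mathrm{supp}(Y)$. By Lemma \ref{RecognizeFiniteExt} (applied with $L = Y$, $N = X$), it suffices to show that $M(\alpha) = 0$ for all but finitely many $\alpha \in S$. In fact, I would prove the stronger statement that $S$ itself is finite, which avoids having to inspect $M$ at all and reduces the problem to properties of the supports of $X$ and $Y$.

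Suppose for contradiction that $S$ is infinite, and pick an infinite sequence of pairwise distinct arrows $\alpha_i : x_i \to y_i$ in $S$. Since $Q$ is locally finite, only finitely many arrows can share the same source or the same target, so after passing to a subsequence we may assume that the $x_i$ are pairwise distinct and that the $y_i$ are pairwise distinct. Now apply top-finiteness of $\mathrm{supp}(X)$: since every vertex in $\mathrm{supp}(X)$ is a successor of one of finitely many sources, pigeonhole yields a source $t$ of $\mathrm{supp}(X)$ and an infinite subsequence with each $x_i$ a successor of $t$. Then apply socle-finiteness of $\mathrm{supp}(Y)$ to further extract an infinite subsequence in which each $y_i$ is a predecessor of a fixed sink $s$ of $\mathrm{supp}(Y)$.

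For each index $i$ in this final subsequence, pick a path $p_i$ from $t$ to $x_i$ in $\mathrm{supp}(X)$ and a path $q_i$ from $y_i$ to $s$ in $\mathrm{supp}(Y)$; concatenating gives a path $p_i \cdot \alpha_i \cdot q_i$ from $t$ to $s$ in $Q$. Since the vertices $y_i$ are pairwise distinct, these concatenated paths are pairwise distinct (the vertex $y_i$ lies on the $i$-th one), producing infinitely many paths from $t$ to $s$. This contradicts interval-finiteness of $Q$, completing the proof.

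The main obstacle I anticipate is the bookkeeping in the pigeonhole step: one must be careful to extract a single subsequence that simultaneously has distinct arrows, a common source ancestor $t$ in $\mathrm{supp}(X)$, and a common sink descendant $s$ in $\mathrm{supp}(Y)$, and then to ensure the resulting paths from $t$ to $s$ are genuinely distinct. Local finiteness of $Q$ is what lets one upgrade distinctness of arrows to distinctness of endpoints, and this is what makes the final contradiction with interval-finiteness work.
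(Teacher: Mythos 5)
Your proof is correct and follows essentially the same route as the paper's: reduce via Lemma \ref{RecognizeFiniteExt} to showing the set of arrows from $\mathrm{supp}(X)$ to $\mathrm{supp}(Y)$ is finite, then use top-finiteness of $\mathrm{supp}(X)$, socle-finiteness of $\mathrm{supp}(Y)$ and a pigeonhole argument to manufacture infinitely many paths between a fixed source and a fixed sink, contradicting interval-finiteness. The extra care you take with extracting subsequences of distinct endpoints is a harmless refinement of the same argument.
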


\begin{proof}
From Lemma \ref{RecognizeFiniteExt}, it is sufficient to show that
there are finitely many arrows starting from a vertex in ${\rm
supp}(X)$ and ending to a vertex in ${\rm supp}(Y)$. Assume the
contrary. Let $\{\alpha_i: x_i \to y_i\}_{i\ge 1}$ be an infinite
family of arrows with $x_i \in {\rm supp}(X)$ and $y_i \in {\rm
supp}(Y)$. Since ${\rm supp}(X)$ is top-finite and ${\rm supp}(Y)$
is socle-finite, there exist a source vertex $x$ in ${\rm supp}(X)$
and a sink vertex $y$ in ${\rm supp}(Y)$ such that, for infinitely
many $i \ge 1$, $x_i$ is a successor of $x$ in ${\rm supp}(X)$ and
$y_i$ is a predecessor of $y$ in ${\rm supp}(Y)$.  Hence, there are
infinitely many paths starting from $x$ and ending at $y$.  This
contradicts the fact that $Q$ is interval-finite.
\end{proof}

Recall that since $Q$ is interval-finite, $Q(x,y)$ is finite for
$x,y \in Q_0$. However, $Q(x,y)$ can be arbitrarily large when $x,y$
run through $Q_0 \times Q_0$. If there exits a global bound on
$|Q(x,y)|$, then every extension of a finitely co-presented
representation by a finitely presented one is finite.

\begin{Prop}
Suppose that $Q$ is such that there exists a positive integer $r$
such that $|Q(x,y)| \le r$ for any $x,y \in Q_0$.  Then
$\mathcal{E}(X,Y) = \Ext(X,Y)$ for any $X \in \rep^-(Q)$ and $Y \in
\rep^+(Q)$.
\end{Prop}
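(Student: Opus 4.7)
The plan is to apply Lemma \ref{RecognizeFiniteExt} and reduce the statement to showing that the set
$$
S=\{\alpha:x\to y\text{ in }Q\mid x\in\mathrm{supp}(X),\ y\in\mathrm{supp}(Y)\}
$$
is finite; once this is known, every extension-representation $M$ of $X$ by $Y$ automatically satisfies $M(\alpha)=0$ for all but finitely many $\alpha\in S$, so every class in $\Ext(X,Y)$ is finite and the claimed equality follows (the reverse inclusion $\mathcal{E}(X,Y)\subseteq\Ext(X,Y)$ being immediate from the definition).

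I would argue by contradiction, in the spirit of Lemma \ref{Ext+-}. Assume $S$ is infinite and choose distinct arrows $\alpha_i:a_i\to b_i$ in $S$ for $i\ge 1$. Since $X\in\rep^-(Q)$, $\mathrm{supp}(X)$ is socle-finite and has only finitely many sinks, so a pigeonhole argument on them extracts a subfamily for which every $a_i$ admits a path $a_i\rightsquigarrow t$ in $\mathrm{supp}(X)$ to a single sink $t$. Dually, the top-finiteness of $\mathrm{supp}(Y)$ (coming from $Y\in\rep^+(Q)$) supplies a single source $s\in\mathrm{supp}(Y)$ with $s\rightsquigarrow b_i$ in $\mathrm{supp}(Y)$ for all $i$ in some further subfamily.

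The main obstacle is the final step, where the new hypothesis $|Q(x,y)|\le r$ has to enter. In Lemma \ref{Ext+-}, the paths $x\rightsquigarrow x_i$, the arrow $\alpha_i$, and $y_i\rightsquigarrow y$ concatenate into infinitely many directed paths from $x$ to $y$, which violates interval-finiteness. In the present setting the arrows $\alpha_i$ point from $\mathrm{supp}(X)$ \emph{into} $\mathrm{supp}(Y)$, so the reference points $t$ and $s$ lie on the ``wrong'' sides and a direct concatenation $s\rightsquigarrow b_i \,{}^{\alpha_i}\!\!\leftarrow\! a_i\rightsquigarrow t$ is not a path. The natural remedy is to refine the pigeonhole using an injective copresentation $0\to X\to\bigoplus_{i=1}^m I_{w_i}\to\bigoplus_{j}I_{z_j}\to 0$ of $X$ and a projective presentation $\bigoplus_{j=1}^n P_{u_j}\to Y\to 0$ of $Y$: these embed $\mathrm{supp}(X)$ into the predecessors of $\{w_1,\dots,w_m\}$ and $\mathrm{supp}(Y)$ into the successors of $\{u_1,\dots,u_n\}$, and by pigeonholing once more we may assume $a_i\rightsquigarrow w$ and $u\rightsquigarrow b_i$ for fixed $w\in\{w_1,\dots,w_m\}$ and $u\in\{u_1,\dots,u_n\}$. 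The goal is then to combine these data with the infinite family $\{\alpha_i\}$ to exhibit more than $r$ distinct paths between some fixed pair of vertices, contradicting the uniform bound. Carrying out this counting step rigorously — in particular, pinpointing the pair of vertices between which one genuinely exceeds the threshold $r$ — is the principal technical hurdle of the proof.
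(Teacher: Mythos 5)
Your reduction is correct: by Lemma \ref{RecognizeFiniteExt} it suffices to show that the set $S$ of arrows from ${\rm supp}(X)$ (socle-finite) to ${\rm supp}(Y)$ (top-finite) is finite, and you rightly observe that the concatenation trick of Lemma \ref{Ext+-} fails here because the arrows of $S$ point the ``wrong way.'' But the proof stops exactly at the step that constitutes the actual content of the proposition, and the direction you sketch for closing it will not work. There is in general \emph{no} fixed pair of vertices carrying more than $r$ paths: pigeonholing the $a_i$ onto a common sink $t$ (or onto a common injective vertex $w$), and the $b_i$ onto a common source $s$ (or projective vertex $u$), only produces paths $a_i\rightsquigarrow t$ and $s\rightsquigarrow b_i$ that still cannot be concatenated with $\alpha_i\colon a_i\to b_i$ into paths between two fixed endpoints, so the bound $|Q(x,y)|\le r$ never gets engaged.

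The missing idea is to produce, not common endpoints, but two genuine infinite paths. Socle-finiteness of ${\rm supp}(X)$ together with local finiteness yields (by a K\"onig-type extraction) a left-infinite path $\cdots\to a_3\to a_2\to a_1$ in ${\rm supp}(X)$ containing infinitely many sources of arrows of $S$; then, since $Q$ is locally finite, the corresponding targets form an infinite subset of ${\rm supp}(Y)$, and top-finiteness gives a right-infinite path $b_1\to b_2\to\cdots$ in ${\rm supp}(Y)$ containing infinitely many of them. The contradiction is then obtained along a \emph{moving} sequence of pairs: for each crossing arrow $a_i\to b_{j'}$ with $i\le j$ and $j'\le j$ one gets a distinct path $a_j\rightsquigarrow a_i\to b_{j'}\rightsquigarrow b_j$, so $|Q(a_j,b_j)|$ tends to infinity with $j$ and eventually exceeds $r$. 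This is precisely where the uniform bound (rather than mere interval-finiteness) is used, and it is the step your proposal leaves as a ``technical hurdle''; as written, the argument is therefore incomplete.
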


\begin{proof}
Let $$0 \to L \to M \to N \to 0$$ be an extension with $L \in
\rep^+(Q)$ and $N \in \rep^-(Q)$.   Let $A$ be the set of all arrows
in $Q$ starting at a vertex in ${\rm supp}(N)$ and ending at a
vertex in ${\rm supp}(L)$. It is sufficient to show that $A$ is
finite. If one of ${\rm supp(N)},{\rm supp}(L)$ is finite, then the
claim follows since $Q$ is locally finite. Hence, we may assume that
both ${\rm supp(N)},{\rm supp}(L)$ are infinite. Suppose to the
contrary that $A$ is infinite. This implies that $S=\{s(\alpha) \; |
\; \alpha \in A\}$ and $E=\{e(\alpha) \; | \; \alpha \in A\}$ are
both infinite subsets of $Q_0$.  Since ${\rm supp}(N)$ is
socle-finite and $S \subseteq {\rm supp}(N)$ is infinite, there
exists a left infinite path
$$p: \quad \cdots \to a_3 \to a_2 \to a_1$$
in ${\rm supp}(N)$ with an infinite number of $a_j$ being in $S$.
Then $J := p \cap S$ is infinite. Since ${\rm supp}(L)$ is
top-finite and
$$J'=\{e(\alpha) \mid  \alpha \in A, \; s(\alpha) \in J \} \subseteq E \subseteq {\rm supp}(L)$$ is infinite, there
exists a right-infinite path
$$b_1 \to b_2 \to b_3 \to \cdots$$
in ${\rm supp}(L)$ with an infinite number of $b_j$ being in $J'$.
It is now easy to see that the sets $Q(a_j,b_j)$, $j \ge 1$, have
arbitrary large cardinality. This is a contradiction.
\end{proof}

Let us introduce some definitions. Let $\Sigma$ be a quiver and
$\Omega$ a full subquiver of $\Sigma$. One says that $\Omega$ is
\emph{predecessor-closed} (resp. \emph{successor-closed}) in
$\Sigma$ if for any path $p: x \rightsquigarrow y$ in $\Sigma$ with
$y$ in $\Omega$ (resp. $x$ in $\Omega$), we have $x \in \Omega$
(resp. $y \in \Omega$).
We say that $\Omega$ is \emph{co-finite} in $\Sigma$ if $\Sigma_0
\backslash \Omega_0$ is finite.  If $\Omega$ is a full subquiver of
$\Sigma$, and $M$ is a representation of $\Sigma$, we denote by
$M_{\Omega}$ the restriction of $M$ to $\Omega$.  It is a
representation of $\Omega$ but can be seen as a representation of
$\Sigma$ by setting $M_{\Omega}(x)=0$ if $x \not \in \Omega$.

\begin{Lemma} \label{StandardExt}
Suppose that $M \in \rrep(Q)$. Then there exists a top-finite full
subquiver $\Omega$ of $Q$ such that $M_{\Omega} \in \rep^+(Q)$ is a
sub-representation of $M$ and $M/M_{\Omega} \in \rep^-(Q)$.  In
particular,
$$0 \to M_{\Omega} \to M \to M/M_{\Omega} \to 0$$
is a finite extension.
\end{Lemma}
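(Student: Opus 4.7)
Since $M \in \rrep(Q)$, fix a finite extension
\[
0 \to L' \to M \to N' \to 0
\]
with $L' \in \rep^+(Q)$ and $N' \in \rep^-(Q)$. The plan is to build $\Omega$ as a modest finite enlargement of ${\rm supp}(L')$, verify that $M_\Omega$ is a subrepresentation of $M$ with $M_\Omega \in \rep^+(Q)$ and $M/M_\Omega \in \rep^-(Q)$, and then invoke Lemma \ref{FiniteExtLemma} for the final assertion.

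The first key observation is that $I := {\rm supp}(L') \cap {\rm supp}(N')$ is finite: every $v \in I$ lies on some path from a source of ${\rm supp}(L')$ (finitely many, by top-finiteness) to a sink of ${\rm supp}(N')$ (finitely many, by socle-finiteness), and by interval-finiteness of $Q$ there are only finitely many such paths between a fixed source-sink pair. Next let $F^*$ be the set of vertices in ${\rm supp}(N')$ reachable by paths inside ${\rm supp}(N')$ starting from $I$. Socle-finiteness of ${\rm supp}(N')$ implies that the successors of a fixed vertex inside ${\rm supp}(N')$ form a finite set (each such successor lies on a finite path to one of finitely many sinks), so $F^*$ is finite. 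Finally, set $\Omega := {\rm supp}(L') \cup F^*$.

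Next I would verify the main properties of $\Omega$. For top-finiteness: $\Omega$ has finitely many sources (those of ${\rm supp}(L')$ not killed by the enlargement, plus vertices of $F^*$), and admits no infinite backward path, because ${\rm supp}(L')$ is top-finite (which together with acyclicity of $Q$ coming from interval-finiteness precludes infinite backward paths there) and $F^*$ is finite. That $M_\Omega$ is a subrepresentation of $M$ reduces to showing $M(\alpha) = 0$ for every arrow $\alpha : x \to y$ with $x \in \Omega$, $y \notin \Omega$: if $x \in {\rm supp}(L') \setminus {\rm supp}(N')$ then $M(x) = L'(x)$ forces $M(\alpha)$ to land in $L'(y) = 0$; otherwise $x \in {\rm supp}(N') \cap \Omega \subseteq F^*$, and $M(\alpha) \neq 0$ would force $y \in {\rm supp}(N')$, making $y$ a successor of $I$ inside ${\rm supp}(N')$, hence $y \in F^* \subseteq \Omega$, a contradiction.

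For the $\rep^+(Q)$- and $\rep^-(Q)$-memberships: the inclusion $L' \hookrightarrow M_\Omega$ has cokernel naturally isomorphic to $N'_\Omega$, whose support is the finite set $F^*$, so $N'_\Omega$ is finite dimensional and in particular lies in $\rep^+(Q)$; since $\rep^+(Q)$ is closed under extensions (by a horseshoe argument using that $\rep(Q)$ is hereditary), $M_\Omega \in \rep^+(Q)$. Dually, $M/M_\Omega \cong N'/N'_\Omega$ is the quotient of $N' \in \rep^-(Q)$ by $N'_\Omega \in \rep^-(Q)$, hence lies in $\rep^-(Q)$ since this abelian subcategory is closed under cokernels. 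The conceptual hurdle, in my view, is seeing why the naive choice $\Omega = {\rm supp}(L')$ fails: arrows from $I$ into ${\rm supp}(N') \setminus {\rm supp}(L')$ may well carry $M(\alpha) \neq 0$, so $M_{{\rm supp}(L')}$ need not be a subrepresentation. The remedy is to absorb all forward successors of $I$ within ${\rm supp}(N')$, and the finiteness of this enlargement, resting on the interplay of top-finiteness, socle-finiteness, and interval-finiteness, is the technical heart of the argument.
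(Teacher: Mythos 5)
Your proof is correct and follows essentially the same route as the paper: start from a finite extension $0 \to L' \to M \to N' \to 0$, enlarge ${\rm supp}(L')$ to a top-finite $\Omega$ so that $M_{\Omega}$ is a subrepresentation with $M_{\Omega}/L'$ finite dimensional, conclude via closure of $\rep^+(Q)$ under extensions and of $\rep^-(Q)$ under cokernels, and invoke Lemma \ref{FiniteExtLemma} for the finiteness of the extension. The only real difference is in the choice of $\Omega$: the paper takes a top-finite \emph{successor-closed} subquiver of $Q$ containing ${\rm supp}(L')$, which makes the subrepresentation property automatic and avoids your explicit set $F^*$ and the case analysis on arrows leaving $\Omega$, at the cost of a slightly larger (but equally harmless) enlargement.
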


\begin{proof} One
has a finite extension
$$0 \to L \to M \to N \to 0$$
with $L \in \rep^+(Q)$ and $N \in \rep^-(Q)$. Let $\Omega$ be a
top-finite and successor-closed subquiver of $Q$ supporting $L$.
Then $L$ is a sub-representation of $M_{\Omega}$. Consider the
diagram
$$\xymatrix{0 \ar[r] & L \ar[r] \ar[d] & M \ar[r] \ar@{=}[d] & N \ar[r] \ar[d]^{g} & 0 \\
0 \ar[r] & M_{\Omega} \ar[r] & M \ar[r] & M/M_{\Omega} \ar[r] & 0}$$
where $g$ is the induced morphism.  Observe that $g$ is an
epimorphism.  By the snake lemma, ${\rm Ker}\,g \cong M_{\Omega} /
L$ and hence,
$${\rm supp}({\rm Ker}\,g) \subseteq {\rm supp}(N) \cap {\rm supp}(M_{\Omega}) \subseteq {\rm supp}(N)
\cap \Omega,$$ which is finite. In particular, ${\rm Ker}\,g$ is
finite dimensional and hence lies in $\rep^-(Q)$. Since $\rep^-(Q)$
has cokernels, $M/M_{\Omega} \in \rep^-(Q)$. The fact that
$M_{\Omega} \in \rep^+(Q)$ follows from the fact that $\rep^+(Q)$ is
closed under extensions. Hence, the bottom row of the above diagram
yields that $M$ is an extension-representation of $M/M_{\Omega} \in \rep^-(Q)$ by
$M_{\Omega} \in \rep^+(Q)$ which is finite by Lemma
\ref{FiniteExtLemma}.
\end{proof}

\section{Properties of $\rrep(Q)$}

The purpose of this section is to collect some properties of
$\rrep(Q)$ and its objects. Using the fact that both $\rep^+(Q)$ and
$\rep^-(Q)$ are abelian and Hom-finite, we can prove that $\rrep(Q)$
is abelian and Hom-finite and consequently is \emph{Krull-Schmidt}, that is, the Krull-Remak-Schmidt theorem holds: every object decomposes uniquely (up to isomorphism and permutation) as a finite direct sum of objects having local endomorphism algebras. Let us first prove that it
is abelian.

\begin{Prop}
The category $\rrep(Q)$ is abelian.
\end{Prop}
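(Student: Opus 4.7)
The plan is to prove the stronger statement that $\rrep(Q)$ is closed under sub-representations and quotient representations taken in the ambient abelian category $\rep(Q)$. Together with the obvious additivity this suffices: for any morphism $f\colon M \to N$ in $\rrep(Q)$, the kernel $\ker f$ is a sub-representation of $M$ and the cokernel $\mathrm{coker}(f) = N/\mathrm{Im}(f)$ is a quotient of $N$, both of which will then lie in $\rrep(Q)$, and the canonical image-coimage isomorphism is inherited from $\rep(Q)$. Additivity itself is immediate, since the zero representation is trivially a finite extension and the direct sum of two finite extensions of $\rep^-(Q)$-objects by $\rep^+(Q)$-objects is again such a finite extension.

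To establish sub-representation-closure, I would start with $M \in \rrep(Q)$ and, via Lemma~\ref{StandardExt}, fix a finite extension
$$0 \to L \to M \to X \to 0$$
with $L \in \rep^+(Q)$ and $X \in \rep^-(Q)$. For a sub-representation $K \subseteq M$, intersecting with $L$ yields
$$0 \to L \cap K \to K \to K/(L \cap K) \to 0,$$
in which $L \cap K$ is a sub-representation of $L$ and $K/(L \cap K) \cong (L+K)/L$ is a sub-representation of $M/L = X$. Invoking the fact (established in \cite{BLP}) that both $\rep^+(Q)$ and $\rep^-(Q)$ are closed under sub-representations in $\rep(Q)$, the two outer terms lie in $\rep^+(Q)$ and $\rep^-(Q)$ respectively. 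To verify this extension is finite, Lemma~\ref{RecognizeFiniteExt} reduces the check to arrows $\alpha\colon x \to y$ with $x \in {\rm supp}(K/(L\cap K)) \subseteq {\rm supp}(X)$ and $y \in {\rm supp}(L \cap K) \subseteq {\rm supp}(L)$; on each such arrow $K(\alpha)$ is the restriction of $M(\alpha)$, so $K(\alpha)=0$ whenever $M(\alpha)=0$, which holds outside a finite set of arrows by finiteness of the original extension of $M$.

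The argument for quotient-closure is dual: for $K \subseteq M$, the short exact sequence
$$0 \to (L+K)/K \to M/K \to M/(L+K) \to 0$$
has $(L+K)/K \cong L/(L \cap K)$ a quotient of $L \in \rep^+(Q)$ and $M/(L+K) \cong X/((L+K)/L)$ a quotient of $X \in \rep^-(Q)$. Closure of $\rep^+(Q)$ and $\rep^-(Q)$ under quotients in $\rep(Q)$ (again from \cite{BLP}) puts the outer terms in the right categories, and the same arrow-counting with Lemma~\ref{RecognizeFiniteExt} makes the extension finite, giving $M/K \in \rrep(Q)$.

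The hard part will be the appeal to closure of $\rep^+(Q)$ and $\rep^-(Q)$ under sub-representations and quotients in $\rep(Q)$; these are non-trivial properties that do not follow formally from being abelian subcategories, but rely on the strongly locally finite hypothesis on $Q$ and must be extracted from \cite{BLP}. Granted those, the rest of the argument amounts to bookkeeping with short exact sequences and a single application of Lemma~\ref{RecognizeFiniteExt} to inherit finiteness from $M$.
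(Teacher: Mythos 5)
Your reduction to showing that $\rrep(Q)$ is closed under arbitrary sub-representations and quotients in $\rep(Q)$ does not work, because neither that statement nor the auxiliary claims it rests on are true. The assertion that $\rep^+(Q)$ is closed under sub-representations in $\rep(Q)$ (and dually that $\rep^-(Q)$ is closed under quotients) fails, and it cannot be ``extracted from \cite{BLP}''. Concretely, let $Q$ have vertices $a, b_1, b_2, \ldots$ and $c_1, c_2, \ldots$ with arrows $a \to b_1$, $b_i \to b_{i+1}$ and $b_i \to c_i$ for all $i$; this quiver is strongly locally finite. The representation $N$ with $N(c_i)=k$ for all $i$ and $N(x)=0$ elsewhere is a sub-representation of the indecomposable projective $P_a$, and $N \cong \bigoplus_{i\ge 1} P_{c_i}$ has infinite-dimensional top, hence is not finitely generated and not in $\rep^+(Q)$. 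Worse, $N$ is not even in $\rrep(Q)$: in any short exact sequence $0 \to L \to N \to N/L \to 0$ with $L \in \rep^+(Q)$, the quotient $N/L$ is supported on all but finitely many of the pairwise non-adjacent sink vertices $c_i$, so its support is not socle-finite and $N/L \notin \rep^-(Q)$. Thus $\rrep(Q)$ itself is not closed under sub-representations, so the strategy --- not merely its justification --- collapses. (Dually, $P_a/N$ is a quotient of $P_a$ whose minimal projective presentation has kernel $\bigoplus_{i\ge 1} P_{c_i}$, so quotient-closure of $\rep^+(Q)$ fails too.)

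What is true, and what the paper proves, is the weaker statement that the kernel (and cokernel) of a morphism $f\colon M_1 \to M_2$ \emph{between two objects of} $\rrep(Q)$ again lies in $\rrep(Q)$. The paper chooses finite extensions $0 \to (M_i)_{\Omega} \to M_i \to M_i/(M_i)_{\Omega} \to 0$ for a single successor-closed subquiver $\Omega$, restricts $f$ to obtain morphisms $u$ in $\rep^+(Q)$ and $h$ in $\rep^-(Q)$, and applies the snake lemma; the decisive points are that $\ker u$ is the kernel of a morphism \emph{inside} $\rep^+(Q)$ (which is abelian, hence closed under kernels of its own morphisms, though not under arbitrary subobjects) and that the relevant sub-representation of $\ker h$ differs from $\ker h$ by a finite-dimensional piece. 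The special position of $\ker f$ as a kernel of a map landing in an object of $\rrep(Q)$, rather than as an arbitrary subobject, is exactly the information your proposal throws away.
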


\begin{proof}
Let us first prove that $\rrep(Q)$ has kernels.  Let $f: M_1 \to
M_2$ be a morphism with $M_1,M_2 \in \rrep(Q)$.  It is easy to see
that $M_1 \oplus M_2 \in \rrep(Q)$. By Lemma \ref{StandardExt},
there exists a full subquiver $\Omega$ of $Q$ such that $(M_1 \oplus
M_2)_{\Omega} \in \rep^+(Q)$ and $(M_1 \oplus M_2) / (M_1 \oplus
M_2)_{\Omega} \in \rep^-(Q)$. Now, $(M_i)_{\Omega} \in \rep^+(Q)$,
for $i=1,2$, since $\rep^+(Q)$ is closed under direct summand.
Similarly, $M_i/(M_i)_{\Omega} \in \rep^-(Q)$, for $i=1,2$. By Lemma
\ref{FiniteExtLemma}, we have finite extensions
$$0 \to (M_i)_{\Omega} \to M_i \to M_i/(M_i)_{\Omega} \to 0$$
for $i=1,2$. Thus, one has a diagram
$$\xymatrix{0 \ar[r] & (M_1)_{\Omega} \ar[r] \ar[d]^u & M_1 \ar[r] \ar[d]^f & M_1/(M_1)_{\Omega} \ar[r] \ar[d]^h & 0 \\
0 \ar[r] & (M_2)_{\Omega} \ar[r] & M_2 \ar[r] & M_2/(M_2)_{\Omega}
\ar[r] & 0}$$ where $u$ is the restriction of $f$ to $\Omega$ and
$h$ is the induced morphism.  This yields an exact sequence
$$0 \to {\rm Ker}\,u \to {\rm Ker}\,f \stackrel{v'}{\rightarrow} {\rm Ker}\,h \stackrel{v}{\rightarrow} {\rm Coker}\,u$$
in $\rep(Q)$. Now, ${\rm Ker}\,u, {\rm Coker}\,u \in \rep^+(Q)$
while ${\rm Ker}\,h \in \rep^-(Q)$.  Since $${\rm supp}({\rm Im}\,v)
\subseteq {\rm supp}({\rm Coker}\,u) \cap {\rm supp}({\rm
Ker}\,h),$$ ${\rm Im}\,v$ is finite dimensional. Hence, ${\rm
Ker}\,v \in \rep^-(Q)$ since ${\rm Ker}\,h/{\rm Ker}\,v$ is finite
dimensional. Therefore, ${\rm Im}\,v' \in \rep^-(Q)$, meaning that
${\rm Ker}\,f$ is an extension-representation of ${\rm Im}\,v' \in \rep^-(Q)$ by
${\rm Ker}\,u \in \rep^+(Q)$. Now, if ${\rm Ker}\,f$ is not a finite
extension-representation of ${\rm Im}\,v'$ by ${\rm Ker}\,u$, it means that there
exists an infinite family of arrows $\{\alpha_i\}_{i\ge 1}$ such
that for $i \ge 1$, $\alpha_i$ starts at a vertex in ${\rm
supp}({\rm Im}\,v')$, ends at a vertex in ${\rm supp}({\rm Ker}\,u)$
and $({\rm Ker}\,f)(\alpha_i)$ is non-zero. Observe that $${\rm
supp}({\rm Im}\,v') \subseteq {\rm supp}(M_1/(M_1)_{\Omega})\;\;
\text{and}\;\;{\rm supp}({\rm Ker}\,u) \subseteq {\rm
supp}((M_1)_{\Omega}).$$ Moreover, $({\rm Ker}\,f)(\alpha_i) \ne 0$
implies that $M_1(\alpha_i) \ne 0$. Hence, for $i \ge 1$, $\alpha_i$
is an arrow starting at a vertex in ${\rm supp}(M_1/(M_1)_{\Omega})$
and ending at a vertex in ${\rm supp}((M_1)_{\Omega})$ such that
$M_1(\alpha_i) \ne 0$.  This contradicts the fact that the given
extension of $M_1/(M_1)_{\Omega}$ by $(M_1)_{\Omega}$ is finite.
This shows that ${\rm Ker}\,f \in \rrep(Q)$.

Similarly, one can prove that $\rrep(Q)$ has cokernels.  Hence,
being a full subcategory of $\rep(Q)$, $\rrep(Q)$ is abelian.
\end{proof}

\begin{Prop}
The category $\rrep(Q)$ is Hom-finite and Krull-Schmidt.
\end{Prop}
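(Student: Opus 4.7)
The plan is to establish Hom-finiteness first and then deduce Krull-Schmidt from it together with the abelian structure and the local-endomorphism-algebra property of indecomposables in $\rep(Q)$ recalled from \cite{GR}.

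For Hom-finiteness, I would take $M, N \in \rrep(Q)$ and apply Lemma \ref{StandardExt} to each to obtain finite extensions
$$0 \to L_M \to M \to N_M \to 0, \qquad 0 \to L_N \to N \to N_N \to 0$$
with $L_M, L_N \in \rep^+(Q)$ and $N_M, N_N \in \rep^-(Q)$. Applying $\Hom(-,N)$ to the first sequence yields the left-exact sequence
$$0 \to \Hom(N_M, N) \to \Hom(M,N) \to \Hom(L_M, N),$$
so it suffices to bound the two outer terms. Applying $\Hom(L_M,-)$ and $\Hom(N_M,-)$ to the second short exact sequence further reduces the problem to showing that the four spaces $\Hom(L_M, L_N)$, $\Hom(L_M, N_N)$, $\Hom(N_M, L_N)$ and $\Hom(N_M, N_N)$ are finite dimensional.

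The two pure terms $\Hom(L_M, L_N)$ and $\Hom(N_M, N_N)$ are finite dimensional because $\rep^+(Q)$ and $\rep^-(Q)$ are Hom-finite, as recalled in Section 1. For the mixed terms, the key point, and what I expect to be the main obstacle, is to argue that ${\rm supp}(L_M) \cap {\rm supp}(N_N)$ is a finite set of vertices. Indeed, ${\rm supp}(L_M)$ is top-finite, so every vertex in it is a successor of one of finitely many sources $s_1,\dots,s_p$; dually, ${\rm supp}(N_N)$ is socle-finite, so every vertex in it is a predecessor of one of finitely many sinks $t_1,\dots,t_q$. A vertex $x$ in the intersection thus lies on a path from some $s_i$ to some $t_j$, and there are only finitely many such paths by interval-finiteness of $Q$. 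Hence a morphism $L_M \to N_N$ is determined by its components at finitely many vertices where both representations are finite dimensional, making $\Hom(L_M, N_N)$ finite dimensional; the same argument handles $\Hom(N_M, L_N)$.

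For Krull-Schmidt, since $\rrep(Q)$ is abelian (so idempotents split via kernels of $1-e$) and Hom-finite, every endomorphism algebra $\End(X)$ for $X \in \rrep(Q)$ is a finite dimensional $k$-algebra. A complete set of primitive orthogonal idempotents in this algebra therefore exists, and their splittings give a finite direct sum decomposition $X = X_1 \oplus \cdots \oplus X_n$ into indecomposable summands. Each summand $X_i$ is indecomposable in $\rep(Q)$ and so has a local endomorphism algebra by \cite{GR}; this algebra, being a subalgebra structure inherited in the full subcategory $\rrep(Q)$, remains local in $\rrep(Q)$. The classical Krull-Remak-Schmidt theorem for additive categories with finite direct sums in which indecomposables have local endomorphism rings then gives the desired uniqueness, completing the proof.
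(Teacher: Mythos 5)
Your proof is correct and follows essentially the same route as the paper: decompose each object via Lemma \ref{StandardExt} into a finite extension of an object of $\rep^-(Q)$ by one of $\rep^+(Q)$, use left-exactness of $\Hom$ to reduce to four Hom-spaces, and handle the mixed terms by the finiteness of the intersection of a top-finite support with a socle-finite support (interval-finiteness). The paper then simply invokes that a Hom-finite abelian category is Krull--Schmidt, which your final paragraph correctly unpacks.
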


\begin{proof}
Recall that both $\rep^+(Q)$ and $\rep^-(Q)$ are Hom-finite; see
\cite{BLP}. Moreover, if $M \in \rep^+(Q)$ and $N \in \rep^-(Q)$,
then $\Hom(M,N)$ and $\Hom(N,M)$ are finite dimensional since ${\rm
supp}(M) \cap {\rm supp}(N)$ is finite. Let $M_1,M_2 \in \rrep(Q)$
such that $M_i$ is a finite extension-representation of $N_i$ by $L_i$, with $L_i
\in \rep^+(Q)$ and $N_i \in \rep^-(Q)$, $i=1,2$. Then one has
$$0 \to \Hom(N_1,L_2) \to \Hom(M_1,L_2) \to \Hom(L_1,L_2)$$
showing that $\Hom(M_1,L_2)$ is finite dimensional.  Similarly,
$\Hom(M_1,N_2)$ is finite dimensional.  The exact sequence
$$0 \to \Hom(M_1,L_2) \to \Hom(M_1,M_2) \to \Hom(M_1,N_2)$$
yields that $\Hom(M_1,M_2)$ is finite dimensional.  Now, it is well known that a Hom-finite abelian category is Krull-Schmidt. \qedhere
\end{proof}

\begin{Prop} \label{closedext}
The category $\rrep(Q)$ is closed under finite extensions and direct
summands.
\end{Prop}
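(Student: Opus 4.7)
The statement has two parts, which I treat separately.

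For closure under direct summands, given $M = M_1 \oplus M_2 \in \rrep(Q)$, I invoke Lemma \ref{StandardExt} to obtain a top-finite full subquiver $\Omega$ with $M_\Omega \in \rep^+(Q)$ and $M/M_\Omega \in \rep^-(Q)$. Since both restriction to $\Omega$ and the associated quotient commute with direct sums, each $(M_i)_\Omega$ and $M_i/(M_i)_\Omega$ is a direct summand of $M_\Omega$ and $M/M_\Omega$, hence lies in $\rep^+(Q)$ and $\rep^-(Q)$ respectively. The finiteness condition for $M_i$ is inherited from that for $M$ through Lemma \ref{RecognizeFiniteExt}, since any arrow $\alpha$ with $M_i(\alpha) \ne 0$ also satisfies $M(\alpha) \ne 0$.

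For closure under finite extensions, let $0 \to A \to B \to C \to 0$ be a finite extension with $A, C \in \rrep(Q)$. My plan is to apply Lemma \ref{StandardExt} to $A$ and to $C$, obtaining top-finite successor-closed subquivers $\Omega_A, \Omega_C$ with $L_A := A_{\Omega_A} \in \rep^+(Q)$, $N_A := A/L_A \in \rep^-(Q)$, and analogously $L_C, N_C$ for $C$, and then to use $\Omega := \Omega_A \cup \Omega_C$ as the standard subquiver for $B$. The union is successor-closed, with sources contained in the finite union of sources of $\Omega_A$ and sources of $\Omega_C$. To prove $\Omega$ is top-finite, I trace back from any $v \in \Omega$ alternately through sources of $\Omega_A$ and $\Omega_C$; this iteration must terminate at a source of $\Omega$, otherwise a source would repeat, producing an oriented cycle in $Q$ and contradicting interval-finiteness.

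Applying the exact restriction functor to the sequence yields $0 \to A_\Omega \to B_\Omega \to C_\Omega \to 0$, and (by the snake lemma) the quotient sequence $0 \to A/A_\Omega \to B/B_\Omega \to C/C_\Omega \to 0$. The quotient $A_\Omega/L_A$ is supported on $\Omega \cap {\rm supp}(N_A)$, which is finite because any common vertex lies on a path between some source of the top-finite $\Omega$ and some sink of the socle-finite ${\rm supp}(N_A)$, and $Q$ is interval-finite. Hence $A_\Omega/L_A$ is finite dimensional, so $A_\Omega \in \rep^+(Q)$ by closure under extensions; similarly $C_\Omega$ and thus $B_\Omega$ lie in $\rep^+(Q)$. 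Dually, $A/A_\Omega = N_A/(A_\Omega/L_A)$ is a quotient of $N_A \in \rep^-(Q)$ by a finite dimensional subrepresentation, hence lies in $\rep^-(Q)$; similarly $B/B_\Omega \in \rep^-(Q)$.

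The final step is to verify via Lemma \ref{RecognizeFiniteExt} that $0 \to B_\Omega \to B \to B/B_\Omega \to 0$ is a finite extension. For an arrow $\alpha \colon v \to w$ with $v \notin \Omega$, $w \in \Omega$, and $B(\alpha) \ne 0$, the short exact sequence structure forces $A(\alpha) \ne 0$, $C(\alpha) \ne 0$, or $\alpha$ to be one of the finitely many extra arrows of the original extension. In the first case $v \notin \Omega_A$ forces $v \in {\rm supp}(N_A)$, and the finite extension structure of $A$ puts $w$ into the finite set $\Omega \cap {\rm supp}(N_A)$ (modulo finitely many extras for $A$), with local finiteness of $Q$ bounding the count; the case $C(\alpha) \ne 0$ is symmetric. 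The main obstacle is the combined finiteness arguments---top-finiteness of $\Omega = \Omega_A \cup \Omega_C$ and finiteness of the resulting extension---both relying crucially on the interval-finiteness of $Q$.
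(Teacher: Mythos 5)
Your proof is correct and follows essentially the same route as the paper: apply Lemma \ref{StandardExt} to both end terms, pass to the union $\Omega=\Omega_A\cup\Omega_C$, restrict the sequence, and use closure of $\rep^{\pm}(Q)$ under extensions together with the finiteness of $\Omega\cap{\rm supp}(N_A)$. The only cosmetic differences are that you verify top-finiteness of the union (which the paper asserts), perform the final finiteness check by a direct count via Lemma \ref{RecognizeFiniteExt} rather than the paper's contradiction with an infinite family of arrows, and treat direct summands directly instead of deducing closure from the fact that $\rrep(Q)$ is abelian.
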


\begin{proof}
Let
$$(*): 0 \to M_1 \to M_3 \to M_2 \to 0$$
be a finite extension in $\rep(Q)$ with $M_1,M_2 \in \rrep(Q)$. By
Lemma \ref{StandardExt}, there exist top-finite successor-closed
subquivers $\Omega', \Omega''$ of $Q$ such that
$(M_1)_{\Omega'},(M_2)_{\Omega''} \in \rep^+(Q)$ and
$M_1/(M_1)_{\Omega'},M_2/(M_2)_{\Omega''} \in \rep^-(Q)$. Let
$\Omega$ be the union of $\Omega'$ and $\Omega''$, which is
top-finite and successor-closed in $Q$. Consider the following
commutative diagram with exact rows:
$$\xymatrixrowsep{12 pt}\xymatrixrowsep{15 pt}\xymatrix{0 \ar[r] & (M_1)_{\Omega'} \ar[r] \ar[d] & M_1 \ar[r] \ar@{=}[d] & M_1/(M_1)_{\Omega'} \ar[r] \ar[d] & 0 \\
0 \ar[r] & (M_1)_{\Omega} \ar[r] & M_1 \ar[r] & M_1/(M_1)_{\Omega}
\ar[r] & 0.}$$ Since the inclusion $(M_1)_{\Omega'} \to
(M_1)_{\Omega}$ has a cokernel which is supported by the
intersection of ${\rm supp}(M_1 / (M_1)_{\Omega'})$ with $\Omega''$,
which is a finite quiver, we see that $(M_1)_{\Omega} \in
\rep^+(Q)$.   Also, the kernel of the projection
$M_1/(M_1)_{\Omega'} \to M_1/(M_1)_{\Omega}$ is finite dimensional
showing that $M_1/(M_1)_{\Omega}$ is finitely co-presented.
Similarly, $(M_2)_{\Omega} \in \rep^+(Q)$ and $M_2/(M_2)_{\Omega}
\in \rep^-(Q)$. By Lemma \ref{FiniteExtLemma}, the extensions
$$0 \to (M_i)_{\Omega} \to M_i \to M_i/(M_i)_{\Omega} \to 0$$
are finite for $i=1,2$.  By restricting the extension $(*)$ to
$\Omega$ and $Q \backslash \Omega$, one gets a commutative diagram
$$\xymatrixrowsep{12 pt}\xymatrixrowsep{15 pt}\xymatrix{0 \ar[r] & (M_1)_{\Omega} \ar[r] \ar[d] & (M_3)_{\Omega} \ar[r] \ar[d] & (M_2)_{\Omega} \ar[r] \ar[d] & 0 \\
0 \ar[r] & M_1 \ar[r]\ar[d] & M_3 \ar[r]\ar[d] & M_2 \ar[r]\ar[d] &
0\\
0 \ar[r] & M_1/(M_1)_{\Omega} \ar[r] & M_3/(M_3)_{\Omega} \ar[r] &
M_2/(M_2)_{\Omega} \ar[r] & 0}$$ where all rows and all columns are
exact. Since $\rep^+(Q)$ and $\rep^-(Q)$ are closed under
extensions, $(M_3)_{\Omega} \in \rep^+(Q)$ and $M_3/(M_3)_{\Omega}
\in \rep^-(Q)$.  If $M_3$ is not a finite extension-representation
of $M_3/(M_3)_{\Omega}$ by $(M_3)_{\Omega}$, then there exists an
infinite family of arrows $\{\alpha_i\}_{\i \ge 1}$ all starting at
a vertex in ${\rm supp}(M_3/(M_3)_{\Omega}) \subseteq Q \backslash
\Omega$ and ending at a vertex in ${\rm supp}((M_3)_{\Omega})
\subseteq \Omega$ such that $M_3(\alpha_i) \ne 0$. Since the middle
row is a finite extension, $(M_1\oplus M_2)(\alpha_i)\ne 0$ for all
but a finite number of $i \ge 1$, which yields that $M_1(\alpha_i)
\ne 0$ for all but a finite number of $i \ge 1$ or $M_2(\alpha_i)
\ne 0$ for all but a finite number of $i \ge 1$. In the first case,
we get that the first column is not a finite extension, and in the
second case, that the third column is not a finite extension.  This
is a contradiction. This shows that $M_3 \in \rrep(Q)$. The last
part of the proposition follows from the fact that $\rrep(Q)$ is
abelian.
\end{proof}


\smallskip

In order to have a better understanding of the objects in
$\rrep(Q)$, we first state the following result, which can be
derived easily from \cite[Theorem 1.12]{BLP}.

\begin{Prop}[Bautista-Liu-Paquette] \label{PropBLP} Let $0 \ne M \in \rep^+(Q)$ have support
$\Sigma$. Then there exists a co-finite and successor-closed
subquiver $\Omega$ of $\Sigma$ such that
\begin{enumerate}[$(1)$]
    \item $M_{\Omega}$ is projective,
    \item $M_{\Sigma\backslash \Omega}$ is non-zero, finite dimensional
    and is
indecomposable when $M$ is.
\end{enumerate}
Moreover, any co-finite and successor-closed subquiver $\Omega'$ of
$\Omega$ also satisfies properties $(1)$ and $(2)$.
\end{Prop}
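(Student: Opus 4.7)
The statement is presented as an easy consequence of [BLP, Theorem 1.12], so the plan is to recall that structural theorem, apply it to $M$, and then handle the final assertion by a second application of the same theorem.

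Since $M \in \rep^+(Q)$ is non-zero, its support $\Sigma$ is top-finite. [BLP, Theorem 1.12] provides a finite, predecessor-closed subquiver $F \subseteq \Sigma$ (the finite dimensional ``head'' of $M$) such that $M_{\Sigma \backslash F}$ is projective, i.e.\ a finite direct sum of indecomposables $P_a$ with $a \in \Sigma \backslash F$, while $M_F$ is non-zero, finite dimensional, and indecomposable whenever $M$ is. Setting $\Omega = \Sigma \backslash F$, the subquiver $\Omega$ is co-finite (since $F$ is finite) and successor-closed in $\Sigma$ (since $F$ is predecessor-closed). Property (1) is then immediate from the projective decomposition of $M_{\Sigma \backslash F}$, and property (2) from the fact that $M_F = M_{\Sigma \backslash \Omega}$ is non-zero, finite dimensional, and inherits the indecomposability of $M$ via the theorem.

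For the ``moreover'' part, let $\Omega' \subseteq \Omega$ be any co-finite successor-closed subquiver of $\Sigma$, and set $F' = \Sigma \backslash \Omega'$, a finite predecessor-closed subquiver of $\Sigma$ with $F \subseteq F'$. Write $M_\Omega = \bigoplus_j P_{a_j}$ with $a_j \in \Omega$. Successor-closure of $\Omega'$ in $\Sigma$ forces every $P_{a_j}$ with $a_j \in \Omega'$ to have its entire support in $\Omega'$, so those summands survive the restriction to $\Omega'$ intact. The remaining finitely many $P_{a_j}$ (those with $a_j \in \Omega \backslash \Omega'$, a finite set) contribute only finite dimensional data that can be absorbed into $M_{F'}$. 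Re-applying [BLP, Theorem 1.12] with the enlarged head $F'$ in place of $F$ then gives that $M_{\Omega'}$ is still a finite direct sum of indecomposable projectives $P_b$ with $b \in \Omega'$, and that $M_{F'}$ is non-zero, finite dimensional, and indecomposable when $M$ is. This yields (1) and (2) for $\Omega'$.

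The main obstacle in the plan is justifying that [BLP, Theorem 1.12] may be invoked with \emph{any} admissible finite predecessor-closed head $F' \supseteq F$, not merely the canonical one it originally produces. The transfer of indecomposability from $M$ down to $M_{F'}$ is the delicate part, but it is precisely the content of the theorem once one verifies admissibility: successor-closure of $\Omega'$ in $\Sigma$ ensures that each projective summand of $M_\Omega$ is either wholly contained in $\Omega'$ or has its source vertex in the finite set $\Omega \backslash \Omega'$, so nothing is broken by re-cutting at $F'$, and the re-application of the structure theorem is legitimate.
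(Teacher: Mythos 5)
The paper gives no proof of this proposition at all --- it is stated as a quotation of \cite[Theorem 1.12]{BLP} with the remark that it ``can be derived easily'' from that theorem --- so your overall strategy of deferring to the cited structure theorem is exactly what the paper intends, and your first paragraph is a faithful account of the main statement.

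Your treatment of the ``moreover'' clause, however, has two concrete problems. First, the claim that the summands $P_{a_j}$ with $a_j \in \Omega \backslash \Omega'$ ``contribute only finite dimensional data that can be absorbed into $M_{F'}$'' is false: $(P_{a_j})_{\Omega'}$ is $P_{a_j}$ with only finitely many vertices of its support deleted, hence is in general infinite dimensional. What is true, and what actually needs to be argued for property $(1)$, is that the restriction of $P_{a_j}$ to a co-finite successor-closed subquiver is again a finite direct sum of representations $P_x$ (its top is finite dimensional because $Q$ is locally finite and only finitely many vertices are removed, and a subrepresentation of a projective is projective since $\rep(Q)$ is hereditary). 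Only the piece $(P_{a_j})_{F'}$ is finite dimensional. Second, your justification of property $(2)$ for $\Omega'$ --- ``re-applying [BLP, Theorem 1.12] with the enlarged head $F'$'' --- is circular: an existence theorem produces \emph{some} admissible $\Omega$, and cannot by itself certify an arbitrary prescribed candidate $\Omega'$; certifying every co-finite successor-closed subquiver of $\Omega$ is precisely the content of the ``moreover'' clause you are trying to prove. You either need to check that the cited theorem already contains this stability statement, or give a direct argument, e.g.\ that a nontrivial decomposition of $M_{\Sigma \backslash \Omega'}$ would lift, using the projectivity of $M_{\Omega'}$ and an argument along the lines of the proof of Lemma \ref{ExtProjFinCoPres}(4) in this paper, to a nontrivial decomposition of $M$.
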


Of course, the dual result for $\rep^-(Q)$ holds true. We will show
that a similar statement in $\rrep(Q)$ can be obtained. Before going
further, we need a lemma. We say that a representation $M \in
\rep(Q)$ is \emph{indecomposable up to projectives} if $M = M_1
\oplus M_2$ implies that $M_1$ or $M_2$ is projective in
$\rep^+(Q)$.  We also have the dual notion of \emph{indecomposable
up to injectives}. Finally, $M \in \rep(Q)$ is \emph{indecomposable
up to projectives and injectives} if $M = M_1 \oplus M_2$ implies
that one of $M_1,M_2$ is projective in $\rep^+(Q)$ or injective in
$\rep^-(Q)$.

\begin{Lemma} \label{ExtProjFinCoPres}
Let $M \in \rrep(Q)$ have support $\Sigma$.  Then there exists a
top-finite and successor-closed subquiver $\Omega$ of $\Sigma$ such
that
\begin{enumerate}[$(1)$]
    \item $M_{\Omega} \in \rep^+(Q)$ is projective,
    \item $M/M_{\Omega} \in \rep^-(Q)$,
    \item If $M/M_{\Omega}$ is indecomposable, then $M$
is indecomposable up to projectives,
    \item If $M$ is indecomposable,
then $M/M_{\Omega}$ is indecomposable.
\end{enumerate}
Moreover, any successor-closed and co-finite subquiver of $\Omega$
also satisfies properties $(1)$ to $(4)$.
\end{Lemma}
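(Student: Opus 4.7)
The plan is to construct $\Omega$ in two stages. First, apply Lemma \ref{StandardExt} to obtain a top-finite successor-closed subquiver $\Omega_0 \subseteq \Sigma$ with $M_{\Omega_0} \in \rep^+(Q)$, $M/M_{\Omega_0} \in \rep^-(Q)$, and with the resulting short exact sequence $0 \to M_{\Omega_0} \to M \to M/M_{\Omega_0} \to 0$ a finite extension. Next, apply Proposition \ref{PropBLP} to the representation $M_{\Omega_0}$, whose support is $\Omega_0$, to extract a co-finite successor-closed subquiver $\Omega$ of $\Omega_0$ such that $M_\Omega = (M_{\Omega_0})_\Omega$ is projective in $\rep^+(Q)$ and $(M_{\Omega_0})_{\Omega_0\setminus \Omega}$ is finite dimensional (and indecomposable when $M_{\Omega_0}$ is). Transitivity of successor-closure shows that $\Omega$ is still successor-closed in $\Sigma$, and $\Omega$ remains top-finite since $\Omega_0$ is.

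Properties (1) and (2) follow from the construction. Property (1) is immediate, while (2) follows from the exact sequence
$$0 \to M_{\Omega_0}/M_\Omega \to M/M_\Omega \to M/M_{\Omega_0} \to 0$$
whose left term is supported on the finite set $\Omega_0\setminus\Omega$ (hence finite dimensional and in $\rep^-(Q)$), whose right term lies in $\rep^-(Q)$, and from the extension-closure of $\rep^-(Q)$. Property (3) admits a direct argument: if $M/M_\Omega$ is indecomposable and $M = M_1 \oplus M_2$, then restricting to $\Omega$ and quotienting yields $M/M_\Omega = M_1/(M_1)_\Omega \oplus M_2/(M_2)_\Omega$, so one of the summands, say $M_2/(M_2)_\Omega$, vanishes. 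Then $M_2 = (M_2)_\Omega$ is a direct summand of the projective $M_\Omega$, and since $\rep^+(Q)$ is Krull-Schmidt, $M_2$ is a finite direct sum of $P_a$'s, hence projective in $\rep^+(Q)$.

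The main obstacle is property (4): assuming $M$ indecomposable, one must show that every decomposition $M/M_\Omega = N_1 \oplus N_2$ is trivial. The strategy is to promote such a decomposition to one of $M$ itself. Using that the sequence $0 \to M_\Omega \to M \to M/M_\Omega \to 0$ is a finite extension (Lemma \ref{FiniteExtLemma}) and the heredity of $\rep(Q)$, decompose the associated class $\eta \in \Ext(M/M_\Omega, M_\Omega)$ as $\eta = (\eta_1,\eta_2) \in \Ext(N_1, M_\Omega) \oplus \Ext(N_2, M_\Omega)$. Decompose $M_\Omega$ further into indecomposable projectives in $\rep^+(Q)$ and analyse the components of $\eta_1, \eta_2$ on each summand: the indecomposability of the finite dimensional piece $M_{\Omega_0}/M_\Omega = (M_{\Omega_0})_{\Omega_0\setminus\Omega}$ supplied by Proposition \ref{PropBLP}(2) (after noting that indecomposability of $M$ forces the relevant indecomposability of $M_{\Omega_0}$ up to projective summands, which are controlled by (3)) is used to partition the projective summands of $M_\Omega$ compatibly with the split $(N_1, N_2)$, producing an honest direct sum decomposition of $M$ and contradicting its indecomposability. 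Finally, the \emph{moreover} clause follows for any co-finite successor-closed $\Omega' \subseteq \Omega$: (1) is inherited from the \emph{moreover} part of Proposition \ref{PropBLP}, (2) from the finite dimensional extension $0 \to M_\Omega/M_{\Omega'} \to M/M_{\Omega'} \to M/M_\Omega \to 0$, and (3) and (4) by repeating the arguments above with $\Omega$ replaced by $\Omega'$.
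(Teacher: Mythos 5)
Your construction of $\Omega$ stops one step too early, and as a result property $(4)$ can genuinely fail for the $\Omega$ you produce. After extracting the projective restriction via Proposition \ref{PropBLP} (your $\Omega$, the paper's $\Sigma_P$), the paper shrinks once more to a co-finite successor-closed subquiver $\Omega$ of $\Sigma_P$ subject to two further conditions: every arrow $\alpha : x \to y$ with $y \in \Omega$ and $M(\alpha) \ne 0$ has $x \in \Sigma_P$, and $\Sigma_P \backslash \Omega$ contains the support of the top of the projective $M_{\Sigma_P}$. The second condition is exactly what makes $(4)$ true: it pushes the generators of the projective part into the quotient $M/M_{\Omega}$, so that a decomposition $N_1 \oplus N_2$ of $M/M_{\Omega}$ determines how to split $M_{\Omega}$; the paper then builds an explicit retraction $M \to M'$ onto the subrepresentation generated by $N_1$, by induction on path lengths inside $\Omega$, using the projectivity of $M_{\Sigma_P}$. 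Without this condition your key claim --- that the indecomposable projective summands of $M_{\Omega}$ can be partitioned compatibly with $(N_1,N_2)$ --- simply fails: a single indecomposable projective summand of $M_{\Omega}$ may have nonzero extension components against both $N_1$ and $N_2$, and then no compatible partition exists.

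Concretely, let $Q$ have vertices $a_1, a_2, y, z_1, z_2, \dots$ and arrows $a_1 \to y$, $a_2 \to y$, $y \to z_1 \to z_2 \to \cdots$, and let $M$ be one-dimensional at every vertex with all structure maps the identity; then $M$ is indecomposable and lies in $\rrep(Q)$. Your recipe permits $\Omega_0 = \{a_1, y, z_1, z_2, \dots\}$, a legitimate output of Lemma \ref{StandardExt} with $M_{\Omega_0} \cong P_{a_1}$ indecomposable, and then $\Omega = \{y, z_1, z_2, \dots\}$, a legitimate output of Proposition \ref{PropBLP} since $M_{\Omega} \cong P_y$ is projective and $(M_{\Omega_0})_{\{a_1\}} = S_{a_1}$ is nonzero, finite dimensional and indecomposable. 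Yet $M/M_{\Omega} \cong S_{a_1} \oplus S_{a_2}$ is decomposable, so $(4)$ fails for this $\Omega$ (the paper's extra condition would force $y \notin \Omega$, restoring indecomposability of the quotient). Note also that the indecomposability guarantee in Proposition \ref{PropBLP}$(2)$ is conditional on $M_{\Omega_0}$ being indecomposable, which does not follow from $M$ being indecomposable (nor ``up to projectives'', as your parenthetical suggests), and in any case it concerns $(M_{\Omega_0})_{\Omega_0 \backslash \Omega}$ rather than $M/M_{\Omega} = M_{\Sigma \backslash \Omega}$. Parts $(1)$, $(2)$ and $(3)$ of your argument are fine, but $(4)$ needs both the corrected choice of $\Omega$ and an actual lifting argument in place of the sketched extension-class partition.
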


\begin{proof}
By Lemma \ref{StandardExt}, there exists a top-finite full subquiver
$\Omega'$ of $\Sigma$ such that $M_{\Omega'} \in \rep^+(Q)$ and
$M/M_{\Omega'} \in \rep^-(Q)$. We may choose $\Omega'$ to be
successor-closed in $\Sigma$. If $\Omega'$ is empty, then we are
done.  Suppose that $\Omega'$ is non-empty. By Proposition
\ref{PropBLP}, let $\Sigma_P$ be a co-finite and successor-closed
subquiver of $\Omega'$ such that $(M_{\Omega'})_{\Sigma_P} =
M_{\Sigma_P}$ is projective in $\rep^+(Q)$ and $M_{\Omega'} /
M_{\Sigma_P}$ is finite dimensional, non-zero and indecomposable
whenever $M_{\Omega'}$ is. Clearly, $M_{\Sigma_P} \in \rep^+(Q)$ and
$M/M_{\Sigma_P} \in \rep^-(Q)$.
Suppose that $\Omega$ is a co-finite and successor-closed subquiver
of $\Sigma_P$ such that if $\alpha: x\to y$ is an arrow with $y \in
\Omega$ and $M(\alpha) \ne 0$, then $x \in \Sigma_P$. Such a
co-finite subquiver of $\Sigma_P$ exists since $M$ is a finite
extension-representation of $M/M_{\Sigma_P}$ by $M_{\Sigma_P}$ by Lemma
\ref{StandardExt}. Moreover, one can chose $\Omega$ so that
$\Sigma_P \backslash \Omega$ contains the support of the top of the
projective representation $M_{\Sigma_P}$. Being a sub-representation
of $M_{\Sigma_P}$, $M_{\Omega}$ is projective.  Moreover, it lies in
$\rep^+(Q)$ since $\Omega$ is co-finite in $\Sigma_P$. Similarly,
$M/M_{\Omega} \in \rep^-(Q)$.

Now, suppose that $M$ decomposes non-trivially as $M=M_1 \oplus M_2$
with $M_1,M_2$ non-projective. If the support of $M_1$ is included
in $\Omega$, then $M_1$ is a sub-representation of $M_{\Omega}$ and
hence is projective, a contradiction. Thus, the support of $M_1$ has
an intersection with $\Sigma \backslash \Omega$, meaning that the
restriction $M_1 /(M_1)_{\Omega}$ of $M_1$ to $\Sigma \backslash
\Omega$ yields a non-zero direct summand of $M/M_{\Omega}$. If
$M_1/(M_1)_{\Omega} = M/M_{\Omega}$, then $M_2$ has a support
included in $\Omega$ and hence is projective, a contradiction. This
shows (3).

Conversely, let $N = M/M_{\Omega} = M_{\Sigma \backslash \Omega}$
with a non-trivial decomposition $N=N_1 \oplus N_2$. Let $M'$ be a
the sub-representation of $M$ generated by the elements $N_1(x)
\subseteq N(x) = M(x)$ for $x$ in the support of $N$. Let $i_x:
M'(x) \to M(x)$, for $x \in Q_0$, be the inclusions defining the
inclusion morphism $M' \to M$. Observe now that $(M')_{\Sigma
\backslash \Omega} = N_1$. For $x \in Q \backslash \Omega$, there
are maps $t_x: M(x)=N(x) \to M'(x)=N_1(x)$ such that
$t_xi_x=1_{M'(x)}$ and for any arrow $\alpha : x \to y$ with $x,y
\in Q \backslash \Omega$, $M'(\alpha)t_x = t_yM(\alpha)$. Observe
that $(M')_{ \Sigma_P}, M_{\Sigma_P}$ are projective representations
whose tops are supported by $\Sigma_P \backslash \Omega$. Hence, the
maps $t_x$, $x \in \Sigma_P \backslash \Omega$, provide an
epimorphism $M_{\Sigma_P \backslash \Omega} \to (M')_{\Sigma_P
\backslash \Omega }$ which could be extended to an epimorphism $t:
M_{\Sigma_P} \to (M')_{\Sigma_P}$ between projective
representations. Therefore, $t$ is a retraction. Thus, for $x \in
Q$, we have maps $t_x : M(x) \to M'(x)$ which are compatible with
the arrows in $Q \backslash \Omega$ and in $\Omega$.  Since any
other arrow of $Q_1$ is not supporting $M$, the $t_x$ define an
epimorphism $M \to M'$ and is such that $t_xi_x = 1_{M'(x)}$ for $x
\in Q \backslash \Omega$. It only remains to show that
$t_xi_x=1_{M'(x)}$ for $x \in \Omega$. Recall that $\Omega$ is
top-finite and hence, for $x \in \Omega$, there is a non-negative
integer $n_x$ such that every path $y \rightsquigarrow x$ with $y
\in \Omega$ has length bounded by $n_x$. We proceed by induction on
$n_x$.  If $n_x=0$ and every arrow $\alpha: y \to x$ in $Q$ is not
supporting $M$, then the elements in $M(x)$ are top elements of
$M_{\Sigma_P}$, contradicting the fact the $\Omega$ does not contain
the vertices supporting the top of $M_{\Sigma_P}$. Let
$\{\alpha_i:y_i \to x\}_{1\le i \le r}$ be the non-empty set of
arrows ending in $x$ and supporting $M$. Since $(M')_{\Sigma_P}$ is
a projective representation whose top is supported by $\Sigma_P
\backslash \Omega$ and $y_i \in \Sigma_P$ for $1 \le i \le r$, the
map
$$(M'(y_i)): \oplus_{i=1}^rM'(y_i) \to M'(x)$$
is bijective.  Moreover, $t_{y_i}i_{y_i}=1_{M'(y_i)}$ for $1 \le i
\le r$.  Now, \begin{eqnarray*}t_xi_x(M'(y_i)) & = &
t_x(M(y_i))(i_{y_i})\\ & = & (M'(y_i))(t_{y_i})(i_{y_i})
\\ & = &(M'(y_i))\end{eqnarray*} showing that $t_xi_x=1_{M'(x)}$.
Now, if $n_x > 0$, then every arrow $\alpha : y \to x$ supporting
$M$ is such that $y \in \Sigma_P$ or $y \in \Omega $ with $n_y <
n_x$.  Hence, by induction, $t_yi_y = 1_{M'(y)}$.  The proof then
uses the same argument as above to show that $t_xi_x=1_{M'(x)}$.
This shows that $M'$ is a direct summand of $M$, which is
non-trivial and proper since $N_1$ is a non-trivial proper direct
summand of $N$. Hence, $M$ is decomposable, showing (4). The last
part of the lemma is easy to see. \qedhere
\end{proof}


The following result says how the representations in $\rrep(Q)$ are
constructed.

\begin{Prop} \label{PropProjFiniteInj}
Let $M \in \rrep(Q)$ with support $\Sigma$.  There exist full
subquivers $\Sigma_P$ and $\Sigma_I$ of $\Sigma$ such that
\begin{enumerate}[$(1)$]
    \item $\Sigma_P$ is top-finite and successor-closed in $\Sigma$ such that $M_{\Sigma_P}$ is
    projective,
    \item $\Sigma_I$ is socle-finite and predecessor-closed in $\Sigma \backslash \Sigma_P$ such that $M_{\Sigma_I}$ is
    injective,
    \item $\Omega:=\Sigma \backslash (\Sigma_P \cup \Sigma_I)$ is
finite and non-empty, 
 \item $M_{\Omega}$ is indecomposable whenever $M$ is indecomposable; and $M$ is indecomposable up to projectives and injectives whenever $M_{\Omega}$ is indecomposable.
\end{enumerate}
Moreover, if $\Sigma_P'$ is co-finite and successor-closed in
$\Sigma_P$ and $\Sigma_I'$ is co-finite and predecessor-closed in
$\Sigma_I$, then $\Sigma_I'$ and $\Sigma_P'$ also satisfies
properties $(1)$ to $(4)$.
\end{Prop}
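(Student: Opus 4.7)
My plan is to construct $\Sigma_P$ and $\Sigma_I$ in two stages. First, applying Lemma~\ref{ExtProjFinCoPres} to $M \in \rrep(Q)$ produces a top-finite successor-closed subquiver $\Sigma_P$ of $\Sigma$ such that $M_{\Sigma_P}$ is projective in $\rep^+(Q)$ and $N := M/M_{\Sigma_P} \in \rep^-(Q)$, together with the indecomposability relations (3) and (4) of that lemma. Second, applying the dual of Proposition~\ref{PropBLP} to $N$, whose support is $\Sigma \backslash \Sigma_P$, produces a co-finite predecessor-closed subquiver $\Sigma_I$ of $\Sigma \backslash \Sigma_P$ such that $N_{\Sigma_I}$ is injective in $\rep^-(Q)$, and $N_\Omega$, with $\Omega := (\Sigma \backslash \Sigma_P) \backslash \Sigma_I$, is non-zero, finite-dimensional, and indecomposable when $N$ is. Since $\Sigma_P$ and $\Omega$ are disjoint, one has $M_\Omega = N_\Omega$ and $M_{\Sigma_I} = N_{\Sigma_I}$.

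Property (1) is then immediate; (2) follows because an injective in $\rep^-(Q)$ has socle-finite support, which here equals $\Sigma_I$; and (3) is immediate from the co-finiteness of $\Sigma_I$ in $\Sigma \backslash \Sigma_P$ together with the non-vanishing of $N_\Omega$ guaranteed by the dual of Proposition~\ref{PropBLP}. For the forward direction of (4), if $M$ is indecomposable then $N$ is indecomposable by Lemma~\ref{ExtProjFinCoPres}(4), and so $M_\Omega = N_\Omega$ is indecomposable by the dual of Proposition~\ref{PropBLP}.

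For the converse direction of (4), the crucial point is that $M_{\Sigma_I}$ is a \emph{finite} direct sum of the representations $I_y$: indeed $\Sigma_I$ is socle-finite and $M_{\Sigma_I}$ is locally finite-dimensional, so the multiplicity at each sink is finite and there are only finitely many sinks. Each $I_y$ is injective in $\rep(Q)$, so $M_{\Sigma_I}$ is itself injective in $\rep(Q)$, and the canonical short exact sequence
$$0 \to M_{\Sigma_P \cup \Omega} \to M \to M_{\Sigma_I} \to 0$$
in $\rep(Q)$ splits, yielding $M = M_{\Sigma_P \cup \Omega} \oplus M_{\Sigma_I}$. Applying Lemma~\ref{ExtProjFinCoPres}(3) to $M_{\Sigma_P \cup \Omega}$, whose projective sub-representation is $M_{\Sigma_P}$ with quotient $M_\Omega$, the assumed indecomposability of $M_\Omega$ implies that $M_{\Sigma_P \cup \Omega}$ is indecomposable up to projectives. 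Consequently, any indecomposable decomposition of $M$ consists of at most one exceptional non-projective summand together with some indecomposable projective summands (from $M_{\Sigma_P \cup \Omega}$) and some indecomposable injective summands (from $M_{\Sigma_I}$).

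The main obstacle is completing the converse of (4) by ruling out ``bad'' two-way decompositions: for any $M = M_1 \oplus M_2$, one of the summands must be projective in $\rep^+(Q)$ or injective in $\rep^-(Q)$. I plan to exploit the specific construction of $\Sigma_P$ and $\Sigma_I$, which (by Lemma~\ref{ExtProjFinCoPres} built on Proposition~\ref{PropBLP}, and dually) excludes the tops of the projective part and the socles of the injective part respectively. Hence each indecomposable projective summand and each indecomposable injective summand of $M$ contributes a non-trivial direct summand to $M_\Omega$, so the indecomposability of $M_\Omega$ restricts these extra summands and precludes the simultaneous presence of both types alongside a non-projective-non-injective exceptional summand. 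The ``Moreover'' clause will then follow from the corresponding ``Moreover'' parts of Lemma~\ref{ExtProjFinCoPres} and Proposition~\ref{PropBLP}, since shrinking $\Sigma_P$ and $\Sigma_I$ merely enlarges $\Omega$ while preserving the decomposition structure above.
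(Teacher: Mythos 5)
Your construction (Lemma~\ref{ExtProjFinCoPres} to get $\Sigma_P$, then the dual of Proposition~\ref{PropBLP} applied to $M/M_{\Sigma_P}$ to get $\Sigma_I$), your treatment of (1)--(3), and the forward direction of (4) all match the paper. The converse direction of (4), however, rests on a false splitting claim. In your short exact sequence
$$0 \to M_{\Sigma_P \cup \Omega} \to M \to M_{\Sigma_I} \to 0$$
the injective representation $M_{\Sigma_I}$ is the \emph{quotient}, not the kernel (indeed $\Sigma_P\cup\Omega$ is the successor-closed part of $\Sigma$, so $M_{\Sigma_P\cup\Omega}$ is the sub-representation). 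The class of this sequence lies in $\Ext(M_{\Sigma_I}, M_{\Sigma_P\cup\Omega})$, and injectivity of the quotient gives no vanishing there: you would need $M_{\Sigma_I}$ projective or $M_{\Sigma_P\cup\Omega}$ injective. The splitting is in fact false in general --- for an indecomposable doubly-infinite $M$ both terms are non-zero, so $M=M_{\Sigma_P\cup\Omega}\oplus M_{\Sigma_I}$ would contradict indecomposability. Everything you build on this decomposition collapses, and your closing paragraph is explicitly only a plan whose key assertion (that every indecomposable projective or injective summand of $M$ contributes a non-trivial summand to $M_\Omega$) is unproved and, as stated, incorrect.

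The intended argument is more direct and uses a normalization you omitted: since $M\in\rrep(Q)$, only finitely many arrows supporting $M$ join $\Sigma_I$ to $\Sigma_P$, so one may enlarge $\Omega$ so that \emph{no} arrow supporting $M$ is attached to both $\Sigma_P$ and $\Sigma_I$. Then, given a non-trivial decomposition $M=M_1\oplus M_2$, restricting to $\Omega$ and using the indecomposability of $M_\Omega$ forces, say, $(M_2)_\Omega=0$, so every indecomposable summand $Z$ of $M_2$ is supported in $\Sigma_P\cup\Sigma_I$; as no arrow supporting $Z$ joins the two parts and $Z$ is indecomposable, either ${\rm supp}(Z)\subseteq\Sigma_P$, making $Z$ a direct summand of the projective $M_{\Sigma_P}$, or ${\rm supp}(Z)\subseteq\Sigma_I$, making $Z$ a direct summand of the injective $M_{\Sigma_I}$. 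No splitting of $M$ itself is needed or available.
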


\begin{proof}
By Lemma \ref{ExtProjFinCoPres}, there exists a successor-closed and
top-finite subquiver $\Sigma_P$ of $\Sigma$ such that $M_{\Sigma_P}$
is projective in $\rep^+(Q)$ and $M/M_{\Sigma_P} \in \rep^-(Q)$.
Moreover, if $M/M_{\Sigma_P}$ is indecomposable, then $M$ is
indecomposable up to projectives; and if $M$ is indecomposable, then
$M/M_{\Sigma_P}$ is.  Since $M/M_{\Sigma_P} \in \rep^-(Q)$, by the
dual of Proposition \ref{PropBLP}, there exists a predecessor-closed
subquiver $\Sigma_I$ of $\Sigma \backslash \Sigma_P$ such that
$\Sigma_I$ is co-finite in $\Sigma \backslash \Sigma_P$ and
$(M/M_{\Sigma_P})_{\Sigma_I} = M_{\Sigma_I}$ is injective in
$\rep^-(Q)$. It is easy to see that $\Sigma \backslash (\Sigma_P
\cup \Sigma_I)$ is finite. By Proposition \ref{PropBLP}, $\Sigma_P$
and $\Sigma_I$ can be chosen so that $\Omega:=\Sigma \backslash
(\Sigma_P \cup \Sigma_I)$ is non-empty.  Moreover, since $M \in
\rrep(Q)$, we can assume that $\Omega$ is large enough so that any
arrow attached to $\Sigma_P$ and $\Sigma_I$ does not support $M$.
Suppose now that $M$ is indecomposable. Then $M/M_{\Sigma_P}$ is
indecomposable by Lemma \ref{ExtProjFinCoPres}. Moreover, by the
dual of Proposition \ref{PropBLP}, one can choose $\Sigma_I$ in such
a way that
$$(M/M_{\Sigma_P})_{\Sigma \backslash \Sigma_I} = M_{\Omega}$$ is indecomposable.
Conversely, suppose that $M_{\Omega}$ is indecomposable. Then any
non-trivial decomposition of $M$ yields an indecomposable direct
summand $Z$ of $M$ supported by $\Sigma_I
\cup \Sigma_P$. 
Since no arrow supporting $Z$ joins $\Sigma_P$ to $\Sigma_I$, we see
that ${\rm supp}(Z) \subseteq \Sigma_P$ or ${\rm supp}(Z) \subseteq
\Sigma_I$. In the first case, $Z$ is a direct summand of
$M_{\Sigma_P}$ and hence is projective in $\rep^+(Q)$. In the second
case, $Z$ is injective in $\rep^-(Q)$. The last part of the
statement follows similarly.
\end{proof}

\section{Irreducible morphisms in $\rrep(Q)$}

Let $\C$ be any additive $k$-category.  A morphism $f:X\to Y$ is
said to be \emph{irreducible} if it is neither a section nor a
retraction, and any factorization $f = gh$ imply that $h$ is a
section or $g$ is a retraction. In this section, we prove that the
irreducible morphisms in $\rrep(Q)$ are all contained in the
Auslander-Reiten sequences of $\rep(Q)$.

\smallskip

For simplicity, an indecomposable representation $M \in \rrep(Q)$
which is neither finitely presented nor finitely co-presented will
be called \emph{doubly-infinite}, since its support contains a
left-infinite path and a right-infinite path.

\smallskip

Now, we need some notations for the lemmas presented in this
section. Fix $M,N$ two doubly-infinite indecomposable
representations in $\rrep(Q)$ with a non-isomorphism $f : M \to N$.
Let $\Sigma$ be the support of $M \oplus N$. We can deduce from
Proposition \ref{PropProjFiniteInj} that there exist a
successor-closed subquiver $\Sigma_P$ of $\Sigma$ and a
predecessor-closed subquiver $\Sigma_I$ of $\Sigma \backslash
\Sigma_P$ such that $M_{\Sigma_P},N_{\Sigma_P}$ are projective in
$\rep^+(Q)$, $M_{\Sigma_I}, N_{\Sigma_I}$ are injective in
$\rep^-(Q)$ and $M_{\Theta},N_{\Theta}$, where $\Theta = \Sigma
\backslash (\Sigma_P \cup \Sigma_I)$, are finite dimensional and
indecomposable.

Since $\Sigma_I$ is clearly infinite, there exists a finite
successor-closed subquiver $\Theta'$ of $\Sigma_I$ such that:

\begin{enumerate}[$(1)$]
\item $\Theta'$ supports the socle of $(M\oplus N)_{\Sigma_I}$,
    \item every arrow $x \to y$ supporting $M \oplus N$ with $x \in
\Sigma_I \backslash \Theta'$ is such that $y \in \Sigma_I$.
\end{enumerate}
\smallskip
\noindent Such a finite quiver $\Theta'$ exists since $M\oplus N$ is
a finite extension-representation of $(M \oplus N)_{\Sigma_I}$ by
$(M \oplus N)_{\Sigma \backslash \Sigma_I}$ by Lemma
\ref{FiniteExtLemma}. Now, there exist a vertex $a \in \Sigma_I
\backslash \Theta'$ and an arrow $a \to b$ with $b \in \Theta'$.
Then $a$ does not lie in the support of ${\rm soc}(M \oplus
N)_{\Sigma_I}$ and we can choose it so that every arrow starting in
$a$ and supporting $M \oplus N$ has an ending point in $\Theta'$.
The successor-closed subquiver of $\Sigma_I$ generated by $\Theta'$
and $a$ will be denoted by $\Lambda$. Observe that $\Lambda$ is
finite and successor-closed in $\Sigma_I$.
Now, set $\Delta : = (\Sigma \backslash \Sigma_I) \cup \Lambda$ and
$\Delta' := \Theta \cup \Lambda = \Delta \backslash \Sigma_P$; see
figure $1$.

\begin{figure}[t]
\begin{center}
\includegraphics[width=0.38\textwidth]{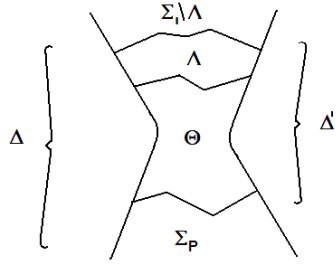}
\end{center}
\caption{The subdivisions of the quiver $\Sigma$}

\end{figure}

\smallskip

First, let us show that $f_{\Sigma \backslash \Sigma_I}$ can be
assumed not be an isomorphism. Otherwise, there exists a co-finite
and predecessor-closed subquiver $\Sigma'_I$ of $\Sigma_I$ such that
$f_{\Sigma \backslash \Sigma'_I}$ is not an isomorphism. By the
second part of Proposition \ref{PropProjFiniteInj}, $\Sigma_P$ and
$\Sigma'_I$ satisfy the same properties stated in the introduction
of this section.  We then set $\Sigma_I :=\Sigma'_I$. The subquivers
$\Theta$ and $\Lambda$ need also to be changed according to the new
definition of $\Sigma_I$. We start with the following lemma.

\begin{Lemma} \label{psi}
Let $L \in \rep(Q)$ such that ${\rm supp}(L) \subseteq \Sigma$ and
any arrow $\alpha : x \to y$ supporting $L$ with $x \in \Sigma_I
\backslash \Lambda$ is such that $y \in \Sigma_I$. Then the
restriction map:
$$\psi_L: \Hom(L,X) \to \Hom(L_{\Delta},X_{\Delta})$$
is an isomorphism of $k$-vector spaces, where $X=M$ or $X=N$.
\end{Lemma}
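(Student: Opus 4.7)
The plan has two parts, both driven by the injective structure of $X_{\Sigma_I}$ in $\rep(Q)$. First, by Proposition \ref{PropProjFiniteInj} and the choice of $\Theta'$ (hence of $\Lambda$), $X_{\Sigma_I}$ decomposes as a finite direct sum $\bigoplus_{a}I_a^{n_a}$ in which every index $a$ is a socle vertex lying in $\Theta'\subseteq\Lambda$. Each $I_a$ is also injective in $\rep(Q)$---via the natural identification of $\Hom_{\rep(Q)}(Y,I_a)$ with the $k$-dual of $Y(a)$---so any morphism $Y\to X_{\Sigma_I}$ is uniquely determined by its components at these finitely many sink vertices, all of which lie in $\Lambda\subseteq\Delta$. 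Moreover, $\Sigma_P\cup\Theta=\Sigma\setminus\Sigma_I$ is successor-closed in $\Sigma$, so $X_{\Sigma_P\cup\Theta}$ is a subrepresentation of $X$ with quotient $X_{\Sigma_I}$; write $q\colon X\to X_{\Sigma_I}$ for the resulting surjection.

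For injectivity of $\psi_L$, suppose $g\colon L\to X$ satisfies $g|_\Delta=0$. Then $qg\colon L\to X_{\Sigma_I}$ vanishes at every socle vertex $a\in\Lambda\subseteq\Delta$, since $q_a$ is the identity on $X(a)=X_{\Sigma_I}(a)$; by the determination principle above, $qg=0$, whence $g_z=0$ for every $z\in\Sigma_I$, so $g=0$. For surjectivity, given $\tilde g\colon L_\Delta\to X_\Delta$, let $h\colon L\to X_{\Sigma_I}$ be the unique morphism with $h_a=\tilde g_a$ at each socle vertex $a\in\Lambda$; since $\Lambda$ is successor-closed in $\Sigma_I$ and $\tilde g$ is a morphism on $\Delta$, one verifies $h_z=\tilde g_z$ for every $z\in\Lambda$. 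Then define
\[
g_z=\tilde g_z\ \ (z\in\Delta),\qquad g_z=h_z\ \ (z\in\Sigma_I\setminus\Lambda),
\]
which gives $\psi_L(g)=\tilde g$ by construction.

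The main obstacle is to verify that this $g$ is genuinely a morphism of representations, i.e.\ that $g_yL(\alpha)=X(\alpha)g_x$ for every arrow $\alpha\colon x\to y$ of $Q$. The nontrivial cases are those with at least one endpoint in $\Sigma_I\setminus\Lambda$. A case analysis combining (i) $\Sigma_P$ is successor-closed in $\Sigma$, (ii) $\Sigma_I$ is predecessor-closed in $\Sigma\setminus\Sigma_P$, (iii) $\Lambda$ is successor-closed in $\Sigma_I$, (iv) the hypothesis on $L$, (v) property (2) of $\Theta'$---which forces any arrow out of $\Sigma_I\setminus\Lambda\subseteq\Sigma_I\setminus\Theta'$ supporting $M\oplus N$ to end in $\Sigma_I$---and (vi) the assumption in Proposition \ref{PropProjFiniteInj} that no arrow between $\Sigma_P$ and $\Sigma_I$ supports $M\oplus N$, shows that every such arrow either has both endpoints in $\Sigma_I$ (in which case the commutation follows from $h$ being a morphism $L\to X_{\Sigma_I}$, since $X_{\Sigma_I}(\alpha)=X(\alpha)$ on arrows internal to $\Sigma_I$) or supports neither $L$ nor $X$ (so the commutation is trivially $0=0$). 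I expect this bookkeeping---rather than any deeper categorical difficulty---to be the main hurdle.
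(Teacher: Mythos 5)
Your argument is correct and is essentially the paper's own proof: your ``determination at socle vertices'' is precisely the paper's use of the injectivity of $X_{\Sigma_I}$ (essentiality of $X_{\Lambda}$ in $X_{\Sigma_I}$ for the injectivity of $\psi_L$, the injective extension property for its surjectivity), followed by the same gluing over $\Lambda=\Delta\cap\Sigma_I$ justified by the hypothesis on arrows leaving $\Sigma_I\setminus\Lambda$. The one soft spot --- an arrow from $\Sigma_P$ into $\Sigma_I\setminus\Lambda$ supporting $L$ is not excluded by the stated hypothesis on $L$, only by how the lemma is actually applied --- is present in the paper's proof as well, so it is not a defect of your approach relative to theirs.
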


\begin{proof} We only consider the case where $X=N$. It is clear that $\psi_L$ is $k$-linear. If $h : L \to N$ is such that $h_{\Delta}=0$,
then $h_{\Lambda}=0$.  Now, consider the morphism $h_{\Sigma_I}:
L_{\Sigma_I} \to N_{\Sigma_I}$ where $N_{\Sigma_I}$ is injective in
$\rep(Q)$. By the construction of $\Lambda$, $N_{\Lambda}$ is an
essential sub-representation of $N_{\Sigma_I}$. Therefore, if ${\rm
Im}(h_{\Sigma_I})$ is non-zero, then it has a non-zero intersection
with $N_{\Lambda}$, contradicting the fact that $h_{\Delta}=0$.
Hence, $h_{\Sigma_I}=0$ yielding $h=0$. This shows that $\psi_L$ is
injective. Conversely, let $g: L_{\Delta} \to N_{\Delta}$ be any
morphism. Consider the canonical inclusions $i_L: L_{\Lambda} \to
L_{\Sigma_I}$ and $i_N : N_{\Lambda} \to N_{\Sigma_I}$. Since
$N_{\Sigma_I}$ is injective, there exists a morphism $v:L_{\Sigma_I}
\to N_{\Sigma_I}$ such that $vi_L=i_Ng_{\Lambda}$. Now, the
morphisms $g : L_{\Delta} \to N_{\Delta}$ and $v:L_{\Sigma_I} \to
N_{\Sigma_I}$ coincide on $\Lambda$, that is, $g_{\Lambda} =
v_{\Lambda}$. Since there is no arrow $x \to y$ with $x \in \Sigma_I
\backslash \Lambda$ and $y \in \Sigma\backslash \Sigma_I$ which
supports $L \oplus N$, we see that $g$ and $v$ yield a morphism $h:
L \to N$ such that $h_{\Delta} = g$. This shows that $\psi_L$ is
surjective and thus that it is an isomorphism.
\end{proof}

\begin{Lemma} \label{indecDelta}
Both $M_{\Delta}$ and $N_{\Delta}$ are indecomposable.
\end{Lemma}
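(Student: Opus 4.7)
The plan is to reduce the indecomposability of $M_\Delta$ and $N_\Delta$ to that of $M$ and $N$ via Lemma \ref{psi}. The key observation is that by our choice of $\Lambda$, which was constructed to contain $\Theta'$, any arrow $x\to y$ supporting $M\oplus N$ with $x \in \Sigma_I \setminus \Lambda$ must satisfy $y \in \Sigma_I$ (this is property $(2)$ in the construction of $\Theta'$, together with the inclusion $\Theta' \subseteq \Lambda$). Combined with the trivial fact that $\mathrm{supp}(M), \mathrm{supp}(N) \subseteq \Sigma$, this shows that both $L = M$ and $L = N$ satisfy the hypothesis on $L$ in Lemma \ref{psi}.

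Taking therefore $L = X = M$ and $L = X = N$ in Lemma \ref{psi}, I obtain $k$-linear isomorphisms
$$\psi_M : \End(M) \longrightarrow \End(M_\Delta), \qquad \psi_N : \End(N) \longrightarrow \End(N_\Delta),$$
each of which is simply the restriction map $f \mapsto f_\Delta$. Because restricting a morphism to $\Delta$ commutes with composition, each of these is in fact a $k$-algebra isomorphism, not merely a linear one.

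Since $M$ and $N$ are indecomposable objects of $\rep(Q)$, their endomorphism algebras are local by the basic fact recalled in Section~$1$ (every indecomposable in $\rep(Q)$ has a local endomorphism algebra, see \cite{GR}). Transporting this property across $\psi_M$ and $\psi_N$, we conclude that $\End(M_\Delta)$ and $\End(N_\Delta)$ are local, and hence both $M_\Delta$ and $N_\Delta$ are indecomposable.

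I expect no real obstacle here: the proof reduces entirely to applying Lemma \ref{psi}, and the only point that needs a brief check is that the restriction map induced by $\psi$ respects composition, together with the verification that the support/arrow hypothesis on $L$ is met by $M$ and $N$ themselves. Both of these are immediate from the construction of $\Lambda$.
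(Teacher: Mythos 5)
Your proof is correct, but it takes a genuinely different route from the paper. The paper invokes Proposition \ref{PropProjFiniteInj} to know that $M_{\Delta'}$ (with $\Delta'=\Theta\cup\Lambda$) is indecomposable, and then argues that any nontrivial decomposition of $M_{\Delta}$ would produce a nonzero direct summand $Z$ supported in $\Sigma_P$; since no arrow from $\Sigma_I\backslash\Lambda$ to $\Sigma_P$ supports $M$, such a $Z$ would extend to a direct summand of $M$ itself, contradicting the indecomposability of $M$. You instead apply Lemma \ref{psi} with $L=X=M$ (resp.\ $L=X=N$); the hypothesis on $L$ is indeed satisfied because $\Lambda\supseteq\Theta'$ and property $(2)$ of $\Theta'$ gives exactly the required arrow condition, and restriction to $\Delta$ is visibly compatible with composition and identities, so $\psi_M:\End(M)\to\End(M_{\Delta})$ is a $k$-algebra isomorphism. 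Transporting locality of $\End(M)$ (which holds by the result of Gabriel--Roiter recalled in Section~1) then yields indecomposability of $M_{\Delta}$. Your argument is shorter and arguably cleaner, exploiting the full strength of Lemma \ref{psi} (bijectivity, not just injectivity, of the restriction map), whereas the paper's argument is more hands-on with supports and only uses the structural decomposition of Proposition \ref{PropProjFiniteInj}; both are valid, and yours has the small added virtue of not needing the ``moreover'' clause of that proposition to pass from $\Theta$ to $\Delta'$.
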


\begin{proof}
By Proposition \ref{PropProjFiniteInj}, $M_{\Delta'}$ is
indecomposable.  Therefore, if $M_{\Delta}$ decomposes
non-trivially, then there is a non-zero direct summand $Z$ of
$M_{\Delta}$ supported by $\Sigma_P$.  Since there is no arrow from
$\Sigma_I\backslash \Lambda$ to $\Sigma_P$ supporting $M$, we see
that $Z$ is a direct summand of $M$, a contradiction.  Thus,
$M_{\Delta}$ is indecomposable and similarly, $N_{\Delta}$ is
indecomposable.
\end{proof}

\begin{Lemma} \label{sublemma1} If $f$ is an irreducible monomorphism, then $f_{\Delta}$ is irreducible in
$\rep^+(\Delta)$.
\end{Lemma}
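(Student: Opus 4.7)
The plan is to deduce irreducibility of $f_{\Delta}$ in $\rep^+(\Delta)$ by lifting any candidate obstruction back to a statement about $f$ in $\rrep(Q)$, and then invoking the irreducibility of $f$ there. Throughout I use the isomorphisms $\psi_M, \psi_N$ from Lemma~\ref{psi}, applied to $L=M$ and $L=N$ respectively (the hypothesis on $L$ is satisfied by the construction of $\Lambda$, which was chosen to contain $\Theta'$).

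First, I would verify that $f_{\Delta}$ is neither a section nor a retraction in $\rep^+(\Delta)$. It is a monomorphism because $f$ is pointwise injective and restriction to a full subquiver preserves pointwise injectivity. If $f_{\Delta}$ admitted a retraction $r\colon N_{\Delta}\to M_{\Delta}$, then $\psi_M^{-1}$ applied to $r$ would produce $\tilde r\colon N\to M$ with $\tilde r_{\Delta}=r$; the computation $(\tilde r f)_{\Delta}=r f_{\Delta}=1_{M_{\Delta}}=(1_M)_{\Delta}$ combined with the injectivity of $\psi_M$ would force $\tilde r f=1_M$, contradicting the fact that $f$ is not a section. Similarly, if $f_{\Delta}$ were a retraction, then being both a monomorphism and a retraction in the abelian category $\rep^+(\Delta)$ it would be an isomorphism, and lifting its inverse through $\psi_N$ would make $f$ an isomorphism, contradicting irreducibility.

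Next, suppose $f_{\Delta}=gh$ with $h\colon M_{\Delta}\to Z$ and $g\colon Z\to N_{\Delta}$ in $\rep^+(\Delta)$. I view $Z$ as a representation of $Q$ supported on $\Delta$; note that $M_{\Delta}$ and $N_{\Delta}$ are genuine sub-representations of $M$ and $N$ because the construction of $\Sigma_P,\Sigma_I,\Lambda$ ensures no arrow from $\Delta$ to $\Sigma_I\setminus\Lambda$ supports $M\oplus N$. I would form the pushout
\[
\xymatrix@C=30pt{M_{\Delta} \ar@{^(->}[r] \ar[d]_h & M \ar[d]^{\tilde h}\\ Z \ar[r]^{\iota} & \tilde Z}
\]
in $\rep(Q)$. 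Vertex-wise $\tilde Z$ equals $Z$ on $\Delta$, equals $M$ on $\Sigma_I\setminus\Lambda$, and is zero elsewhere, giving an exact sequence $0\to Z\to\tilde Z\to M/M_{\Delta}\to 0$. Since $gh$ agrees with $f|_{M_{\Delta}}$ (after composing with the inclusion $N_{\Delta}\hookrightarrow N$), the universal property produces $\tilde g\colon\tilde Z\to N$ in $\rep(Q)$ with $\tilde g\tilde h=f$ and $\tilde g_{\Delta}=g$.

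The main obstacle is showing $\tilde Z\in\rrep(Q)$. Here $Z\in\rep^+(Q)$ because $\Delta$ is top-finite and $Z\in\rep^+(\Delta)$, while $M/M_{\Delta}\cong M_{\Sigma_I}/M_{\Lambda}$ lies in $\rep^-(Q)$ as the cokernel of the finite-dimensional sub-representation $M_{\Lambda}$ of the injective $M_{\Sigma_I}\in\rep^-(Q)$. Finiteness of this extension follows from Lemma~\ref{RecognizeFiniteExt}: any arrow from $\Sigma_I\setminus\Lambda$ to $\Delta$ either ends in $\Sigma_P$, where it fails to support $M$ so that $\tilde Z(\alpha)=0$, or ends in the finite set $\Theta\cup\Lambda$, and such arrows are finite in number by local-finiteness of $Q$. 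With $\tilde Z\in\rrep(Q)$ secured, the irreducibility of $f$ gives two cases: if $\tilde h$ admits a retraction $r\colon\tilde Z\to M$ then $r_{\Delta}h=1_{M_{\Delta}}$, making $h$ a section in $\rep^+(\Delta)$; if $\tilde g$ admits a section $s\colon N\to\tilde Z$ then $g s_{\Delta}=1_{N_{\Delta}}$, making $g$ a retraction. Combined with the first step, this proves irreducibility of $f_{\Delta}$.
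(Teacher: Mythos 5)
Your proposal is correct in its overall strategy and matches the paper's: establish via Lemma~\ref{psi} that $f_{\Delta}$ is a monomorphism which is neither a section nor a retraction, then take an arbitrary factorization $f_{\Delta}=gh$ through $Z\in\rep^+(\Delta)$, extend the middle term to an object of $\rrep(Q)$ that agrees with $M$ on $\Sigma_I\setminus\Lambda$, lift the factorization to $\rrep(Q)$, and invoke the irreducibility of $f$ there. Where you differ is in the execution of the lifting. The paper proceeds by hand: it uses the injectivity of $M_{\Lambda}$ in $\rep(\Lambda)$ to split $u_{\Lambda}$, writes $L_{\Lambda}=M_{\Lambda}\oplus Z$, replaces $M_{\Lambda}$ by $M_{\Sigma_I}$ (extending the complement $s$ via the injectivity of $N_{\Sigma_I}$) and glues the resulting $L'=M_{\Sigma_I}\oplus Z$ with $L$ along $\Lambda$ to get $L''$, together with explicit maps $u''$ and $v''$. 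Your pushout of $M_{\Delta}\hookrightarrow M$ along $h$ produces (an object isomorphic to) the same $L''$, but the maps $\tilde h$ and $\tilde g$ and the identity $f=\tilde g\tilde h$ come for free from the universal property, and you never need the splitting of $u_{\Lambda}$ nor the injectivity of $N_{\Sigma_I}$. This is a genuinely cleaner route to the same intermediate object, and your verification that the extension $0\to Z\to\tilde Z\to M/M_{\Delta}\to 0$ is finite (arrows into $\Sigma_P$ kill $M$, arrows into the finite quiver $\Theta\cup\Lambda$ are finite in number) is exactly the check the paper leaves implicit in ``it is easy to see that $L''\in\rrep(Q)$.''

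One step deserves a caveat: your one-line claim that $Z\in\rep^+(Q)$ ``because $\Delta$ is top-finite and $Z\in\rep^+(\Delta)$'' is not a valid inference in general. Extension by zero from a full top-finite subquiver need not preserve finite presentation: if infinitely many arrows of $Q$ leave $\Delta$, the first syzygy of $\hat Z$ over $Q$ can fail to be finitely generated (this is why the paper's results on restriction to subquivers always require the subquiver to be successor-closed or co-finite). The paper's own proof relies on the same unproved assertion when it declares $L''\in\rrep(Q)$, so this is a soft spot you share with the original rather than a new gap you have introduced; but if you want your write-up to be airtight you should either argue that for the specific $\Delta=\Sigma_P\cup\Theta\cup\Lambda$ the extension by zero does land in $\rep^+(Q)$, or replace $Z$ by a suitable sub- or quotient object for which this is clear.
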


\begin{proof} Suppose that $f: M \to N$ is an irreducible monomorphism, which will be assumed to be an inclusion. By Lemma \ref{psi},
$f_{\Delta}$ is a monomorphism which is neither a section nor a
retraction. Suppose that $f_{\Delta}=vu$ where $u : M_{\Delta} \to
L$, $v: L \to N_{\Delta}$ and $L \in \rep^+(\Delta)$. Since
$\Lambda$ is successor-closed in $\Sigma_I$, it is a convex full
subquiver of $\Sigma_I$.  Hence, $M_{\Lambda}$ and $N_{\Lambda}$ are
injective in $\rep(\Lambda)$ since $M_{\Sigma_I}$ and $N_{\Sigma_I}$
are injective in $\rep(Q)$. Therefore, $u_{\Lambda}$ and
$f_{\Lambda}$ are section maps. Hence, $L_{\Lambda} =
M_{\Lambda}\oplus Z$ where $Z \in \rep(\Lambda)$. Thus, one may
choose to write $u_{\Lambda} = (\id, 0)^T$ and $v_{\Lambda} = (s,
g)$ where $s$ is a section. Since $N_{\Sigma_I}$ is injective, there
exists $s' : M_{\Sigma_I} \to N_{\Sigma_I}$ such that
$(s')_{\Lambda} = s$. Let $i: N_{\Lambda} \to N_{\Sigma_I}$ be the
canonical inclusion. Consider the representation $L' = M_{\Sigma_I}
\oplus Z$ of $\Sigma_I$ with $u' : M_{\Sigma_I} \to L'$ and $v' : L'
\to N_{\Sigma_I}$ where $u' = (\id, 0)^T$ and $v' = (s',ig)$. It is
clear that $(v'u')_{\Lambda} = vu$. Now, the representation $L'$ of
$\Sigma_I$ together with the representation $L$ of $\Delta$ are such
that $(L')_{\Lambda} = L_{\Lambda}$ and thus yield a representation
$L''$ of $\Sigma$ such that if $\alpha: x \to y$ is an arrow with
one extremity in $\Sigma_I \backslash \Lambda$ and the other in
$\Sigma \backslash \Sigma_I$, then $L''(\alpha)=0$. It is easy to
see that $L'' \in \rrep(Q)$. Observe also that if $\alpha: x \to y$
is an arrow with one extremity in $\Sigma_I \backslash \Lambda$ and
the other in $\Sigma \backslash \Sigma_I$, then $M(\alpha)=0$. Using
this and the fact that $\Delta \cap \Sigma_I = \Lambda$, the
morphisms $u: M_{\Delta} \to L$ and $u' : M_{\Sigma_I} \to L'$ give
a morphism $u'' : M \to L''$ such that $(u'')_{\Delta} = u$ and
$(u'')_{\Sigma_I}=u'$. Similarly, the morphisms $v:  L \to
N_{\Delta}$ and $v' : L' \to N_{\Sigma_I}$ give a morphism $v'' :
L'' \to N$ such that $(v'')_{\Delta} = v$ and $(v'')_{\Sigma_I}=v'$.
Hence, $f = v''u''$ by Lemma \ref{psi}. Since $f$ is irreducible in
$\rrep(Q)$, either $u''$ is a section, or $v''$ is a retraction.
Thus, $u$ is a section or $v$ is a retraction.  This shows that
$f_{\Delta}$ is irreducible.
\end{proof}

We are now ready to prove the main result of this section.

\begin{Prop} \label{irred}
Let $f : M \to N$ be an irreducible morphism in $\rrep(Q)$ with
$M,N$ indecomposable.  Then we have four possible cases.
\begin{enumerate}[$(1)$]
    \item $M,N \in \rep^+(Q)$,
    \item $M,N \in \rep^-(Q)$,
    \item $M \in \rep^-(Q)$ is infinite dimensional and $N$ is doubly-infinite,
    \item $N \in \rep^+(Q)$ is infinite dimensional and $M$ is doubly-infinite.
\end{enumerate}
\end{Prop}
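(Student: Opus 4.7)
I would organize the proof as a case analysis on the positions of $M$ and $N$ with respect to the subcategories $\rep^+(Q)$, $\rep^-(Q)$, and the class of doubly-infinite representations. The plan is first to reduce to the case of an irreducible monomorphism using the duality $D_Q$, and then, for each combination not appearing in (1)--(4), to derive a contradiction either by a direct support argument or, in the central case, by invoking Lemma \ref{sublemma1}.

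An irreducible morphism in the abelian category $\rrep(Q)$ is necessarily a monomorphism or an epimorphism. Since $D_Q : \rep(Q) \to \rep(Q^{\rm op})$ restricts to a duality $\rrep(Q) \to \rrep(Q^{\rm op})$, exchanges $\rep^+(Q)$ with $\rep^-(Q^{\rm op})$, and sends monomorphisms to epimorphisms, we may assume $f$ is a monomorphism. Then ${\rm supp}(M) \subseteq {\rm supp}(N)$. This already yields most of the exclusions: if $N \in \rep^-(Q)$, then ${\rm supp}(N)$ is socle-finite and contains no right-infinite path, forcing ${\rm supp}(M)$ to contain no right-infinite path either, which rules out $M$ doubly-infinite; if $N$ is finite-dimensional, then so is $M$, placing us in case (1); and if $M \in \rep^+(Q) \backslash \rep^-(Q)$ while $N \in \rep^-(Q) \backslash \rep^+(Q)$, then ${\rm supp}(M) \cap {\rm supp}(N)$ is both top-finite and socle-finite, hence finite by Section $1$, forcing $M$ finite-dimensional and contradicting $M \notin \rep^-(Q)$.

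These support reductions leave two configurations still to address: (a) $M \in \rep^b(Q)$ paired with $N$ doubly-infinite (which must be excluded, so that case (3) retains its requirement that $M$ be infinite-dimensional); and (b) both $M$ and $N$ doubly-infinite. For case (a), any morphism from the finite-dimensional $M$ into $N$ factors through the restriction $N_{\Omega}$ for any sufficiently large finite successor-closed subquiver $\Omega$ of ${\rm supp}(N)$ containing ${\rm supp}(M)$; by Proposition \ref{PropProjFiniteInj} applied to $N$ one can choose $\Omega$ so that $N_\Omega$ is a proper sub-representation of $N$ in $\rep^+(Q)$, producing a non-trivial factorization of $f$ in $\rrep(Q)$ and contradicting irreducibility. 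For case (b), the hard case, I apply Lemma \ref{sublemma1} to obtain that $f_\Delta$ is irreducible in $\rep^+(\Delta)$ with both $M_\Delta$ and $N_\Delta$ indecomposable by Lemma \ref{indecDelta}. The contradiction is then obtained by constructing an explicit non-trivial factorization of $f_\Delta$ in $\rep^+(\Delta)$, using the socle structure of $N_\Lambda$ and the extra vertex $a \in \Sigma_I \backslash \Lambda$ whose incoming arrows touch $\Lambda$: this vertex provides a concrete finite-dimensional direct summand through which $f_\Delta$ is routed non-trivially.

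The main obstacle is precisely this last construction: one must exhibit, from the finite piece $\Lambda$ and the projective tail $\Sigma_P$, an intermediate object $L \in \rep^+(\Delta)$ together with a factorization $f_\Delta = gh$ in which neither $h$ is a section nor $g$ is a retraction. This reverses the direction of Lemma \ref{sublemma1}, which lifts factorizations from $\rep^+(\Delta)$ up to $\rrep(Q)$; here we must produce the obstruction to irreducibility from scratch, exploiting that in the doubly-infinite setting both $M$ and $N$ carry genuinely injective-like pieces on $\Lambda$ that are not forced to agree with the strict combinatorics of the projective tail. Once this factorization is produced, irreducibility of $f_\Delta$ fails, contradicting Lemma \ref{sublemma1} and excluding case (b); combined with the earlier reductions, $f$ must fall into one of (1)--(4).
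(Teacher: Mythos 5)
Your reduction to the monomorphism case via $D_Q$ and your support-based exclusions are fine in outline, but the proposal has two genuine gaps and one missing configuration. Most seriously, the central case (b) of two doubly-infinite representations is not actually proved: you correctly reach the point where $f_{\Delta}$ is irreducible in $\rep^+(\Delta)$ with $M_{\Delta},N_{\Delta}$ indecomposable (Lemmas \ref{indecDelta} and \ref{sublemma1}), but the contradiction you then announce --- an explicit non-trivial factorization of $f_{\Delta}$ itself --- is precisely the step you flag as ``the main obstacle'' and never carry out. The paper does not factor $f_{\Delta}$ at all. It first shows $N_{\Delta}$ is not projective (using the added vertex $a$), takes the almost split sequence $0 \to W' \to E' \to N_{\Delta} \to 0$ in $\rep(\Delta)$, extracts the irreducible monomorphism $W' \to M_{\Delta}$ with image $W$, and contradicts the irreducibility of \emph{that} map by factoring it through the subrepresentation $L$ of $M_{\Delta}$ generated by $M_{\Sigma \backslash \Sigma_I}$ and $W$; this requires the two nontrivial claims $W_{\Lambda} \ne M_{\Lambda}$ (a dimension count at $a$ via minimal injective coresolutions) and ${\rm supp}(W) \not\subseteq \Lambda$. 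Without a substitute for this chain of arguments, case (b) remains open, and the indirect route the paper takes suggests a direct factorization of $f_{\Delta}$ is not readily available.

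Second, your treatment of case (a) ($M$ finite dimensional, $N$ doubly-infinite) does not work as stated: a \emph{finite} successor-closed subquiver $\Omega$ of ${\rm supp}(N)$ containing ${\rm supp}(M)$ need not exist, since ${\rm supp}(N)$ may contain a right-infinite path starting in ${\rm supp}(M)$; and even when such an $\Omega$ exists, irreducibility only forces $M \to N_{\Omega}$ to be a section, which is not by itself a contradiction. The efficient argument, implicit in the paper's opening paragraph, is that a finite-dimensional indecomposable $M$ admits a minimal left almost split morphism in $\rep(Q)$ whose target lies in $\rep^+(Q)$, forcing $N \in \rep^+(Q)$ and excluding $N$ doubly-infinite. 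Finally, your list of remaining configurations omits ``$M \in \rep^+(Q)\setminus\rep^-(Q)$ infinite dimensional and $N$ doubly-infinite'' (for $f$ a monomorphism): this is not one of (1)--(4), and none of your support arguments excludes it, since a top-finite infinite support sits without obstruction inside a doubly-infinite one. This configuration needs its own exclusion.
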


\begin{proof}
Suppose that $N \in \rep^+(Q)$.  If $N$ is projective, then the
inclusion ${\rm rad}(N) \to N$ is minimal right almost split in
$\rep(Q)$ and hence also minimal right almost split in $\rrep(Q)$.
Therefore, $M$ is a direct summand of the radical of $N$ and hence
is projective in $\rep^+(Q)$. If $N$ is not projective, then one has
an almost split sequence
$$0 \to N' \to E \to N \to 0$$
in $\rep(Q)$ with $N' \in \rep^-(Q)$, which is also an almost split
sequence in $\rrep(Q)$ by Lemma \ref{Ext+-}. Then $M$ is a direct
summand of $E$. If $N'$ is finite dimensional, then $M \in
\rep^+(Q)$. If $N$ is finite dimensional, then $M \in \rep^-(Q)$.
Otherwise, by \cite[Corolarry 3.3]{BLP}, either $M$ is finite
dimensional or doubly-infinite. Hence, if $N \in \rep^+(Q)$, then
(1), (2) or (4) hold. Dually, if $M \in \rep^-(Q)$, (1), (2) or (3)
hold.

We need to show that $M,N$ cannot be both doubly-infinite.  Suppose
it is the case.  We only consider the case where $f$ is a
monomorphism.  The case where $f$ is an epimorphism is treated in a
similar way.

We may assume that $f$ is an inclusion, that is, $M$ is a
sub-representation of $N$. We set $\Sigma$ to be the support of $M
\oplus N$ and use the notation introduced at the beginning of this
section for the subquivers $\Sigma_P, \Sigma_I, \Delta, \Delta',
\Theta$ and $\Lambda$ of $\Sigma$. By Lemma \ref{indecDelta},
$M_{\Delta},N_{\Delta}$ are indecomposable. By Lemma
\ref{sublemma1}, $f_{\Delta}$ is an irreducible monomorphism.
Let us first assume that $N_{\Delta}$ is projective in
$\rep^+(\Delta)$. Then $M_{\Delta}$ is also projective in
$\rep^+(\Delta)$ and $M_{\Delta}$ is a direct summand of the radical
of $N_{\Delta}$. Since $a$ is a source vertex in $\Delta$,
$N_{\Delta} \cong P_a$ and hence $a$ is not in the support of
$M_{\Delta}$. Now, there exists an arrow $a \to c$ in $\Delta$ with
$c \in {\rm supp}(M_{\Delta})$.  By the construction of $\Lambda$,
$c \in \Lambda$. Since $M_{\Sigma_I}$ is injective in $\rep(Q)$,
$M_{\Lambda}$ is injective in $\rep(\Lambda)$. Therefore,
$f_{\Lambda}$ is a section.  Since $N_{\Lambda}$ has a simple top,
it is indecomposable.  Hence $f_{\Lambda}$ is an isomorphism. But
this is impossible since $M_{\Lambda}(a)=0$. This contradiction
shows that $N_{\Delta}$ is not projective. Therefore, we have an
almost split sequence
$$\eta: \quad 0 \to W' \to E' \to N_{\Delta} \to 0$$ in $\rep(\Delta)$.

Observe that $M_{\Delta}$ is a direct summand of $E'$ and $W'$ is
finite dimensional since $W' \in \rep^-(\Delta) = \rep^b(\Delta)$.
Thus, we get an irreducible monomorphism $W' \to M_{\Delta}$ whose
image $W$ is a proper sub-representation of $M_\Delta$. We claim
that $W_{\Lambda} \ne M_{\Lambda}$. Suppose the contrary. In
particular, $W_{\Lambda}$ is injective in $\rep(\Lambda)$, and also
in $\rep^+(\Delta)$ since $\Lambda$ is predecessor-closed in
$\Delta$. By the dual of \cite[Lemma 2.5]{BLP}, $N_{\Delta}$ is
constructed from $W \cong W'$ in the following way. Take a minimal
injective co-resolution
$$(*): \quad0 \to W \to \textstyle{\bigoplus}_{i=1}^rI'_{x_i} \stackrel{}{\rightarrow} \textstyle{\bigoplus}_{j=1}^sI'_{y_j} \to 0$$
of $W$ where the $x_i,y_j$ are vertices in $\Delta$ and $I'_x$, for
$x \in \Delta_0$, denotes the injective representation at $x$ in
$\rep^-(\Delta)$. Then $N_{\Delta}$ is (isomorphic to) the cokernel
of the corresponding map
$$\textstyle{\bigoplus}_{i=1}^rP'_{x_i} \to
\textstyle{\bigoplus}_{j=1}^sP'_{y_j}$$ where $P'_x$, for $x \in
\Delta_0$, denotes the projective representation at $x$ in
$\rep^+(\Delta)$.  In particular, the support of ${\rm
top}(N_{\Delta})$ consists of the $y_j$. Recall that $\Lambda$ is
finite, contains a source vertex $a$, an arrow $a \to b$ and has the
property that every arrow in $Q$ starting in $a$ and supporting $M
\oplus N$ has an ending point in $\Lambda$.  Moreover, $a$ does not
lie in ${\rm supp}({\rm soc}(M \oplus N)_{\Lambda}) = {\rm
supp}({\rm soc}(N_{\Sigma_I}))$. Since $a$ is a source vertex in
$\Delta$ supporting $N_{\Delta}$, $a \in {\rm supp}({\rm top}
N_{\Delta})$, which means that $a = y_t$ for some $t$. For
simplicity, set $I_0 = \textstyle{\bigoplus}_{i=1}^rI'_{x_i}$ and
$I_1=\textstyle{\bigoplus}_{j=1}^sI'_{y_j}$. Let
$\alpha_1,\ldots,\alpha_q$ be the arrows in $\Delta$ starting in
$a$, where $\alpha_i : a \to b_i$, $i=1,2,\ldots,q$. Since $W$ is
indecomposable non-injective in $\rep(\Delta)$ and $a$ is a source
vertex in $\Delta$, $a \not \in {\rm supp}({\rm soc}W)$.
This means that no $x_i$ is equal to $a$. Thus, by the injectivity
of $I_0$, we get that
$$\sum_{i=1}^qI_0(\alpha_i): I_0(a) \to
\textstyle{\bigoplus}_{i=1}^qI_0(b_i)$$ is bijective. In particular,
$${\rm dim}_k(I_0(a)) = \sum_{i=1}^q {\rm dim}_k(I_0(b_i)).$$ Now, since $W_{\Lambda} = M_{\Lambda}$, $${\rm supp}({\rm soc}(W_{\Lambda})) \subseteq {\rm supp}({\rm
soc}(N_{\Lambda})) = {\rm supp}({\rm soc}(N_{\Sigma_I})).$$ By the
definition of the vertex $a$, $a \not \in {\rm supp}({\rm
soc}W_{\Lambda})$. Also, since $W_{\Lambda}$ is injective and every
non-zero $W(\alpha_i)$ is equal to $W_{\Lambda}(\alpha_i)$, one also
has
$${\rm dim}_k(W(a)) = \sum_{i=1}^q {\rm dim}_k(W(b_i)).$$  Therefore, using
$(*)$,
$${\rm dim}_k(I_1(a)) = \sum_{i=1}^q {\rm dim}_k(I_1(b_i)).$$
The last equality is true if and only if $a$ does not support the
socle of $I_1$, that is, if and only if $I'_a$ is not a direct
summand of $I_1$.  This means that $a \ne y_j$ for all $j$, a
contradiction to $a=y_t$. This proves the claim, that is,
$W_{\Lambda} \ne M_{\Lambda}$.

\smallskip

Suppose now that the support of $W$ is contained in $\Lambda$. By
restricting $\eta$ to ${\Sigma \backslash \Sigma_I}$, one gets
$(E')_{\Sigma \backslash \Sigma_I} \cong N_{\Sigma \backslash
\Sigma_I}$. This yields $M_{\Sigma \backslash \Sigma_I} = N_{\Sigma
\backslash \Sigma_I}$ since $N_{\Sigma \backslash \Sigma_I}$ is
indecomposable. But as observed above, $f_{\Sigma \backslash
\Sigma_I}$ is not an isomorphism. This contradiction shows that
${\rm supp}(W)$ is not contained in $\Lambda$. Let $L$ be the
sub-representation of $M_{\Delta}$ generated by $M_{\Sigma
\backslash \Sigma_I}$ and $W$. Since $W$ is finite dimensional, we
have $L \in \rep^+(\Delta)$. Since $M_{\Sigma \backslash \Sigma_I}$
is indecomposable, any proper decomposition of $L$ would yield a
proper section $s_L: L' \to L$ with $L'$ supported by $\Lambda$.
Hence $s_L$ factors through $W$, which means that $L'$ is a direct
summand of $W$. Since $W$ is indecomposable, $L' = W$, contradicting
${\rm supp}(W) \not \subseteq \Lambda$. This shows that $L$ is
indecomposable.
Hence, we have a proper inclusion $W \to L$ between indecomposable
representations. Since $W_{\Lambda} \ne M_{\Lambda}$, we have
another proper inclusion $L \to M_{\Delta}$ between indecomposable
representations of $\rep^+(\Delta)$. This contradicts the fact that
the inclusion $W \to M_{\Delta}$ is irreducible. \qedhere

\end{proof}

\section{The Auslander-Reiten quiver of $\rrep(Q)$}

In this section, we assume that $Q$ is a connected strongly locally
finite quiver and $\rrep(Q)$ is the full abelian subcategory of
$\rep(Q)$ of those objects being finite extension-representations of objects in $\rep^-(Q)$ by
objects in $\rep^+(Q)$. We give a complete description of the
Auslander-Reiten quiver of $\rrep(Q)$ by giving the possible shapes
of its connected components.
\smallskip

Let us recall some definitions. Let $\mathcal{C}$ be any skeletally
small abelian $k$-category such that every indecomposable object has
a local endomorphism algebra. We do not assume that $\C$ is
Hom-finite. Let us denote by ${\rm rad}_{\C}$ (or
simply ${\rm rad}$ when no risk of confusion) the ideal of $\C$
defined as follows. A morphism $f : X \to Y$ lies in ${\rm
rad}_{\C}(X,Y)$ if and only if, for every morphism $g : Y \to X$,
$\id_X-gf$ is an isomorphism.  Now, if $\id_X - gf$ is an
isomorphism of inverse $h$, then a straightforward argument yields
that $\id_Y - fg$ is an isomorphism of inverse $\id_Y + fhg$.
Hence, $f \in {\rm rad}(X,Y)$ if and only if, for every morphism $g:
Y \to X$, $\id_Y - fg$ is an isomorphism. The ideal ${\rm rad}_{\C}$
is known as the \emph{radical of $\C$} and a morphism $f \in {\rm
rad}(X,Y)$ is said to be a \emph{radical morphism}. When $\C$ is
Hom-finite, the description of the radical of $\C$ is given in
\cite{GR}.

It is well known that when $\C$ is Hom-finite with
$X,Y \in \C$, then $f:X \to Y$ is non-zero in $ {\rm
rad}_{\C}(X,Y)/{\rm rad}_{\C}^2(X,Y)$ if and only if $f$ is
irreducible.  This is also true in our setting when $X,Y$ are
indecomposable.

\begin{Lemma}
A morphism $f : X \to Y$ in $\C$ with $X,Y$ indecomposable is
irreducible if and only if it is a radical morphism whose class in $
{\rm rad}_{\C}(X,Y)/{\rm rad}_{\C}^2(X,Y)$ is non-zero.
\end{Lemma}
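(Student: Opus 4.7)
My plan is to reduce both directions to the following clean criterion, which holds because $X$ and $Y$ are indecomposable (hence have local endomorphism algebras): for any object $Z$, a morphism $h\colon X\to Z$ belongs to $\operatorname{rad}(X,Z)$ if and only if $h$ is not a section, and dually a morphism $g\colon Z\to Y$ belongs to $\operatorname{rad}(Z,Y)$ if and only if $g$ is not a retraction. To prove this, I observe that $h$ is a section iff some composition $h'h$ is an isomorphism in $\operatorname{End}(X)$, iff some $h'h$ is a unit in the local ring $\operatorname{End}(X)$. Since in a local ring every non-unit lies in the unique maximal ideal, ``$h$ not a section'' is equivalent to $h'h$ lying in the Jacobson radical of $\operatorname{End}(X)$ for every $h'\colon Z\to X$; then the local-ring identity $1-\mathfrak{m}\subseteq R^\times$ gives that $\id_X-h'h$ is an isomorphism for every $h'$, i.e.\ $h\in\operatorname{rad}(X,Z)$. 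The converse uses the same identity in the other direction. The dual statement is proved in the same way using the equivalent characterization $\id_Y-fg$ isomorphism (already recorded in the paragraph defining $\operatorname{rad}_{\mathcal C}$).

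With this lemma in hand, the reverse direction is immediate. Suppose $f\in\operatorname{rad}(X,Y)\setminus\operatorname{rad}^2(X,Y)$. Then $f$ is not an isomorphism (take $g=f^{-1}$ to see isomorphisms are not radical), so by indecomposability $f$ is neither a section nor a retraction. If $f=gh$ with $h\colon X\to Z$ and $g\colon Z\to Y$, and if $h$ were not a section and $g$ not a retraction, then by the criterion above $h\in\operatorname{rad}(X,Z)$ and $g\in\operatorname{rad}(Z,Y)$, whence $f=gh\in\operatorname{rad}^2(X,Y)$, a contradiction. Hence $h$ is a section or $g$ is a retraction, so $f$ is irreducible.

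For the forward direction, assume $f$ is irreducible. Then $f$ is not a section, so by the criterion $f\in\operatorname{rad}(X,Y)$. It remains to show $f\notin\operatorname{rad}^2(X,Y)$. Suppose to the contrary that $f=\sum_{i=1}^{n}g_ih_i$ with $h_i\in\operatorname{rad}(X,Z_i)$ and $g_i\in\operatorname{rad}(Z_i,Y)$. Set $Z=\bigoplus_{i=1}^{n}Z_i$ and assemble the $h_i$ and $g_i$ into $h\colon X\to Z$ and $g\colon Z\to Y$ with $f=gh$. By irreducibility, $h$ is a section or $g$ is a retraction. If $h$ is a section with retraction $h'=(h'_1,\dots,h'_n)$, then $\id_X=h'h=\sum_i h'_ih_i$; but each $h'_ih_i$ lies in $\operatorname{rad}(X,X)=\operatorname{rad}(\operatorname{End}(X))$ because $\operatorname{rad}_{\mathcal C}$ is an ideal and $h_i$ is radical, contradicting that $\id_X$ is a unit. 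The case where $g$ is a retraction is handled dually, using the ideal property on the right. Thus $f\notin\operatorname{rad}^2(X,Y)$, completing the proof.

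The only mildly technical point is the criterion ``not a section $\Leftrightarrow$ radical'' for morphisms out of an indecomposable object, which requires the characterization of units in a local ring and is where the hypothesis that indecomposables have local endomorphism algebras is used. Once this is established, both implications are quick via the ideal axiom for $\operatorname{rad}_{\mathcal C}$.
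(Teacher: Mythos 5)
Your proof is correct and follows essentially the same route as the paper's: both directions rest on the observation that, because $\End(X)$ and $\End(Y)$ are local, a morphism out of (resp.\ into) an indecomposable is radical precisely when it is not a section (resp.\ not a retraction), combined with assembling a $\rad^2$-decomposition into a single factorization through a direct sum. The only difference is organizational: you isolate that observation as a preliminary criterion, while the paper inlines it.
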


\begin{proof} Let $f : X \to Y$ with $X,Y$ indecomposable.  Assume
first that $f$ is irreducible.  Then $f$ is not an isomorphism.  For
$g: Y \to X$, $gf \in {\rm End}(X)$ is not an isomorphism, and hence
$\id_X - gf$ is an isomorphism since ${\rm End}(X)$ is a local
algebra. This shows that $f \in {\rm rad}(X,Y)$.  Suppose that $f
\in {\rm rad}^2(X,Y)$. Hence, $f = h_1g_1 + \cdots + h_rg_r$ where
$h_i : L_i \to Y$ and $g_i : X \to L_i$ are radical morphisms.  Let
$L = L_1 \oplus \cdots \oplus L_r$, $h=(h_1, \ldots, h_r)$ and $g =
(g_1, \ldots, g_r)^T$. Then $f = hg$ and $g$ is a section or $h$ is
a retraction.  Assume that $g$ is a section.  Then, for $1 \le i \le
r$, there exist $g_i':L_i \to X$ such that $g_1'g_1 + \cdots +
g_r'g_r = \id_X$. Since ${\rm End}(X)$ is local, this means that
$g_j'g_j$ is invertible (of inverse $q$) for some $j$, and hence
that $\id_X - qg_j'g_j$ is not invertible, contradicting the fact
that $g_j$ is a radical morphism. Hence, $g$ is not a section.
Similarly, $h$ is not a retraction. This shows that $f$ is non-zero
in ${\rm rad}(X,Y)/{\rm rad}^2(X,Y)$. Conversely, assume that $f$ is
non-zero in ${\rm rad}(X,Y)/{\rm rad}^2(X,Y)$.  It is clear that $f$
is not an isomorphism since $f \in {\rm rad}(X,Y)$.  Assume that $f
= hg$ with $g : X \to L$ and $h : L \to Y$.  By assumption, one of
$g,h$ is not a radical morphism. Assume that $g$ is not a radical
morphism. Hence, there exists $g' : L \to X$ such that $\id_X - g'g$
is not an isomorphism, meaning that $g'g$ is an isomorphism, since
${\rm End}(X)$ is local. But then $g$ is a section.  Similarly, if
$h$ is not a radical morphism, then $h$ is a retraction.
\end{proof}

Hence, for $X,Y$ indecomposable in $\C$, it makes sense to define
$${\rm irr}(X, Y):={\rm rad}(X, Y)/{\rm rad}^2(X, Y)$$ and call it the
set of irreducible maps from $X$ to $Y$ in $\C$. Let us now turn our
attention to the main object of study of the rest of the paper. The
\emph{Auslander-Reiten quiver} of $\mathcal{C}$, denoted $\it
\Gamma_{\mathcal{C}}$, is a partially valued translation quiver
defined as follows; compare \cite[(2.1)]{L3}. The vertex set is a
complete set of representatives of the isomorphism classes of the
indecomposable objects in $\mathcal{C}$. If $Z$ is a vertex of $\it
\Gamma_{\mathcal{C}}$, we denote by $k_Z$ the division $k$-algebra
${\rm End}(Z)/{\rm rad}(Z, Z)$. Let $X, Y$ be two vertices in
$\Ga_{\mathcal{C}}$.  By definition, there exists a unique arrow
$X\to Y$ in $\Ga_{\mathcal{C}}$ if and only if ${\rm irr}(X,Y)$ is
non-zero.  In this case, if ${\rm irr}(X,Y)$ is of finite length
over $k_X$ and $k_Y$, we attach to the arrow $X \to Y$ a valu\-ation
$(d_{_{XY}}, d'_{_{XY}})$ where $d_{_{XY}}'$ and $d_{_{XY}}$ are the
dimension of ${\rm irr}(X,Y)$ over $k_X$ and $k_Y$, respectively. In
this case, $d_{_{XY}}'$ and $d_{_{XY}}$ are the maximal integers
such that $\mathcal{C}$ admits an irreducible morphism
$X^{d'_{_{XY}}}\to Y$ and an irreducible morphism $X\to
Y^{d_{_{XY}}}$, respectively; see \cite[(3.4)]{Bau}. A valuation
$(d_{_{XY}}, d_{_{XY}}')$ is called {\it symmetric} if
$d_{_{XY}}=d_{_{XY}}'$, and {\it trivial} if
$d_{_{XY}}=d_{_{XY}}'=1$. For technical reasons, we replace each
arrow $X\to Y$ having a symmetric valuation $(d_{_{XY}}, d_{_{XY}})$
by $d_{_{XY}}$ unvalued arrows from $X$ to $Y$. The translation
$\tau$ is defined in such a way that $\tau Z=X$ if and only if
$\mathcal{C}$ has an almost split sequence
$$0 \to X \to Y \to Z \to 0.$$
Hence,
$\Ga_{\mathcal{C}}$ is actually a partially valued translation
quiver with multiple arrows in which all possible valuations are
non-symmetric.  If $\mathcal{C}$ is Hom-finite, then each arrow of
$\Ga_{\mathcal{C}}$ has a valuation attached to it (which is then
replaced by multiple arrows if it is symmetric). A connected
component of $\Ga_{\mathcal{C}}$ is called an {\it Auslander-Reiten
component} of $\mathcal{C}$.

\smallskip

In this section, we study the Auslander-Reiten quiver of $\rrep(Q)$,
which is a Hom-finite abelian $k$-category. We will
show that all arrows of $\Ga_{\rreps(Q)}$ have symmetric valuation,
and hence that $\Ga_{\rreps(Q)}$ is a quiver with no valuation. We
first need the following lemmas.

\begin{Lemma} \label{lemmairr}
An irreducible map between indecomposable objects of $\rrep(Q)$ is
irreducible in $\rep(Q)$.
\end{Lemma}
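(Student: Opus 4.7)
The plan is to leverage Proposition \ref{irred} together with the almost split sequences of $\rep(Q)$ whose middle terms lie in $\rrep(Q)$. First, since $\rrep(Q)$ is a full subcategory of $\rep(Q)$, the radicals agree: ${\rm rad}_{\rrep(Q)}(M,N) = {\rm rad}_{\rep(Q)}(M,N)$, both being defined intrinsically through the local endomorphism algebras of the indecomposable objects. Hence, given that $f$ is irreducible in $\rrep(Q)$, I only need to verify that $f$ does not lie in ${\rm rad}^2_{\rep(Q)}(M,N)$, i.e., that $f$ cannot be factored as $hg$ in $\rep(Q)$ with neither $g$ a section nor $h$ a retraction.

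Next, I would invoke Proposition \ref{irred} to observe that in each of its four cases, at least one of $M$ or $N$ lies in $\rep^-(Q)$ or $\rep^+(Q)$ respectively. By duality, I would assume without loss of generality that $N \in \rep^+(Q)$. Two sub-cases then arise. If $N$ is non-projective in $\rep(Q)$, then by the existence result recalled in Section $1$, there is an almost split sequence $0 \to N' \to E \to N \to 0$ in $\rep(Q)$ with $N' \in \rep^-(Q)$; by Lemma \ref{Ext+-}, this extension is finite, so $E \in \rrep(Q)$. If $N$ is projective in $\rep(Q)$, then necessarily $N \cong P_x$ (since the indecomposable projectives in $\rep^+(Q)$ are precisely the $P_x$), and the minimal right almost split morphism ending at $N$ in $\rep(Q)$ is the inclusion ${\rm rad}(P_x) \hookrightarrow P_x$ with ${\rm rad}(P_x) = \bigoplus_{\alpha:x\to y} P_y \in \rrep(Q)$, a finite direct sum as $Q$ is locally finite.

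In both sub-cases I would then argue that this minimal right almost split morphism remains minimal right almost split when viewed in $\rrep(Q)$. Consequently, the indecomposable objects admitting an irreducible morphism to $N$ (together with the dimensions of the spaces of irreducibles) coincide whether computed in $\rep(Q)$ or in $\rrep(Q)$: in both settings they correspond to the indecomposable summands of $E$ (or of ${\rm rad}(P_x)$). Since $f: M \to N$ is irreducible in $\rrep(Q)$, the indecomposable $M$ must appear as such a summand and $f$ must, up to scalar and automorphism, be the corresponding component of the right almost split morphism; this forces $f$ to be irreducible in $\rep(Q)$. The case $M \in \rep^-(Q)$ is covered by the dual argument using the minimal left almost split morphism starting at $M$, treating both the non-injective case (via an almost split sequence whose middle term lies in $\rrep(Q)$ by Lemma \ref{Ext+-}) and the injective case $M \cong I_x$ (via the quotient $I_x \twoheadrightarrow I_x/{\rm soc}(I_x) \in \rep^-(Q) \subseteq \rrep(Q)$).

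The main obstacle I anticipate is the verification that the almost split property transfers between the two categories: concretely, that every non-split epimorphism $g: X \to N$ with $X \in \rrep(Q)$ factors through the deflation $\pi: E \to N$ via a morphism lying in $\rrep(Q)$, and that right minimality together with the non-retraction property are detected the same way. These points all follow from fullness, since $\Hom_{\rep(Q)}(X, E) = \Hom_{\rrep(Q)}(X, E)$ and isomorphisms, sections, and retractions coincide in the two categories; but this structural input, while routine, is the decisive ingredient and deserves explicit attention.
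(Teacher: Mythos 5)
Your proposal is correct and follows essentially the same route as the paper: reduce via Proposition \ref{irred} to the case where one end lies in $\rep^+(Q)$ or $\rep^-(Q)$, note that the minimal (right or left) almost split morphism attached to that end in $\rep(Q)$ has its middle term in $\rrep(Q)$ (by Lemma \ref{Ext+-}, or because ${\rm rad}(P_x)$, resp.\ $I_x/{\rm soc}(I_x)$, lies there), so it is also minimal almost split in $\rrep(Q)$, and then read off that $f$ is a component of it. The only cosmetic difference is that you argue on the side $N \in \rep^+(Q)$ with right almost split maps, whereas the paper treats the dual side $M \in \rep^-(Q)$ with left almost split maps.
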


\begin{proof} Suppose that $f : M \to N$ is irreducible in $\rrep(Q)$ with
$M,N$ indecomposable.  From Lemma \ref{irred}, either $M \in
\rep^-(Q)$ or $N \in \rep^+(Q)$.  We only consider the first case,
that is, $M \in \rep^-(Q)$. If $M$ is injective in $\rep^-(Q)$, then
$h: M \to M/{\rm soc}M$ is a minimal left almost split map in
$\rep(Q)$ and hence also a minimal left almost split map in
$\rrep(Q)$. Hence, there exists a retraction $r: M/{\rm soc}M \to N$
such that $f=rh$ which shows that $f$ is irreducible in $\rep(Q)$.
Otherwise, we have an almost split sequence
$$\zeta: \;\; 0 \to M \stackrel{h}{\rightarrow} E \to L \to 0$$ in
$\rep(Q)$ with $L \in \rep^+(Q)$. In particular, $\zeta$ is almost
split in $\rrep(Q)$. Hence, there exists a retraction $r: E \to N$
such that $f=rh$ which shows that $f$ is irreducible in $\rep(Q)$.
\end{proof}

Recall from \cite{BLP} that an indecomposable representation $M \in
\rep^+(Q)$ is \emph{regular} in $\Ga_{\rep^+(Q)}$ if the connected
component of $\Ga_{\rep^+(Q)}$ containing $M$ does not contain a
representation of the form $P_x$ or $I_x$, $x \in Q_0$.

\begin{Lemma} \label{infdimrep2} Let $M$ be an infinite dimensional regular representation in $\Ga_{\rep^+\hspace{-1pt}(Q)}\vspace{1pt}$. Then
$\rep(Q)$ has a minimal right almost split morphism $h: N_1\oplus
N_2 \to M$ with $N_1$ indecomposable doubly-infinite and $N_2$
finite dimensional.

\end{Lemma}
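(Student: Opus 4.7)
The plan is to realize $h$ as the right-hand map of the almost split sequence of $\rep(Q)$ ending at $M$, and then analyze the decomposition of its middle term. Since $M$ is regular in $\Ga_{\rep^+(Q)}$, in particular $M$ is not isomorphic to any $P_x$ (the indecomposable projectives of $\rep(Q)$ lying in $\rep^+(Q)$), so $M$ is non-projective in $\rep(Q)$. By the existence theorem for almost split sequences recalled at the end of Section 1, there is an almost split sequence
$$0 \to M' \to E \stackrel{h}{\longrightarrow} M \to 0$$
in $\rep(Q)$ with $M' \in \rep^-(Q)$, and $h : E \to M$ is then a minimal right almost split morphism in $\rep(Q)$.

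By \cite[Corollary 3.3]{BLP}, every indecomposable direct summand of $E$ is either finite dimensional or doubly-infinite. So I may write $E = F \oplus \bigoplus_{i=1}^{r} N_i$ with $F$ finite dimensional and each $N_i$ indecomposable doubly-infinite. The content of the lemma then reduces to showing $r = 1$. That $r \ge 1$ is immediate: if $r = 0$, then $E$ is finite dimensional, hence so is its quotient $M$, contradicting the hypothesis.

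The hard step is $r \le 1$. The plan is to match each doubly-infinite summand $N_i$ of $E$ with an infinite dimensional indecomposable immediate predecessor of $M$ in $\Ga_{\rep^+(Q)}$, and then to invoke the explicit description of the regular components of $\Ga_{\rep^+(Q)}$ from \cite{BLP} to see that $M$ admits at most one such predecessor. For the matching step, given an indecomposable doubly-infinite summand $N_i$, Proposition \ref{PropProjFiniteInj} provides a top-finite successor-closed subquiver $\Omega$ of $\mathrm{supp}(N_i)$ such that $(N_i)_{\Omega} \in \rep^+(Q)$ is projective, infinite dimensional and indecomposable; enlarging $\Omega$ to also cover a suitable successor-closed subquiver adapted to $M$, a restriction argument analogous to Lemma \ref{psi} and the proof of Lemma \ref{sublemma1} should transfer the irreducible map $N_i \to M$ in $\rep(Q)$ to an irreducible map $(N_i)_{\Omega} \to M_{\Omega}$ in $\rep^+(Q)$, where $(N_i)_{\Omega}$ is infinite dimensional indecomposable. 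Distinct $N_i$'s will give distinct such predecessors, forcing $r \le 1$.

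The main obstacle is the last step, namely making the matching rigorous: the almost split sequence in $\rep(Q)$ ending at $M$ generally differs from any almost split sequence in $\rep^+(Q)$ ending at $M$, whenever $M'$ is infinite dimensional. The delicate point is to restrict $N_i \hookrightarrow E \to M$ to a subquiver in a way that preserves both irreducibility and the distinction between different $N_i$, and to verify that the transferred morphism lands in a regular component of $\rep^+(Q)$ so that the shape restrictions from \cite{BLP} apply. Once the matching is established, uniqueness of the infinite dimensional immediate predecessor in the regular AR component forces $r = 1$, and taking $N_1$ to be the unique doubly-infinite summand and $N_2 = F$ completes the proof.
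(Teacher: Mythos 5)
Your setup (the almost split sequence $0 \to M' \to E \to M \to 0$ in $\rep(Q)$ with $M' \in \rep^-(Q)$, and the reduction to showing that $E$ has at most one infinite dimensional indecomposable summand) matches the paper's, but the crucial step is not proved: you explicitly defer the "matching" of each doubly-infinite summand to a predecessor of $M$ in $\Ga_{\rep^+(Q)}$, and that plan has real problems. The summands $N_i$ are not objects of $\rep^+(Q)$, so to get predecessors of $M$ in $\Ga_{\rep^+(Q)}$ you would have to show that suitable restrictions $(N_i)_{\Omega}$ are indecomposable, pairwise non-isomorphic, and that the restricted maps $(N_i)_{\Omega} \to M$ are irreducible in $\rep^+(Q)$ — none of which follows from Lemma \ref{psi} or Lemma \ref{sublemma1}, which are stated for morphisms between two doubly-infinite indecomposables of $\rrep(Q)$, not for maps into a finitely presented target. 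Moreover, the uniqueness statement you want to invoke ("$M$ admits at most one infinite dimensional immediate predecessor in its regular component") is essentially equivalent to the multiplicity statement you are trying to prove, so the argument risks circularity. A secondary inaccuracy: the appeal to \cite[Corollary 3.3]{BLP} to conclude that every indecomposable summand of $E$ is finite dimensional or doubly-infinite is only valid when $M'$ is infinite dimensional; when $M'$ is finite dimensional the sequence lies in $\rep^+(Q)$ and an infinite dimensional summand is finitely presented rather than doubly-infinite, a case your dichotomy silently excludes.

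The paper's argument avoids all of this with a direct dimension count. First it shows that for any infinite dimensional direct summand $L$ of $E$, the irreducible map $g: L \to M$ is an epimorphism: if $M'$ is finite dimensional this follows from \cite[Lemma 4.13(2)]{BLP} applied in $\rep^+(Q)$, and if $M'$ is infinite dimensional, a monomorphism $g$ would place infinitely many vertices of a left-infinite path of ${\rm supp}(M')$ inside the top-finite quiver ${\rm supp}(M)$, contradicting interval-finiteness. Then, choosing a vertex $y_j$ on a right-infinite path of ${\rm supp}(M)$ with $y_j \notin {\rm supp}(M')$ (possible since ${\rm supp}(M')$ is socle-finite), exactness gives ${\rm dim}\,E(y_j) = {\rm dim}\,M(y_j)$, whereas two infinite dimensional summands $L_1, L_2$ each surjecting onto $M$ would force ${\rm dim}\,E(y_j) \ge 2\,{\rm dim}\,M(y_j) > {\rm dim}\,M(y_j)$. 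You should replace your matching strategy with an argument of this kind; as it stands, the proposal does not constitute a proof.
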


\begin{proof}
There exists an almost split sequence
$$\eta: \;\; 0 \to M' \to E \to M \to 0$$
in $\rrep(Q)$ where $M'$ is finitely co-presented. Since $M$ is
infinite dimensional, $E$ is infinite dimensional. Let $L$ be an
infinite dimensional direct summand of $E$. There are irreducible
morphisms $f: M' \to L$ and $g: L \to M$ in $\rrep(Q)$. Let us first
assume that $M'$ is infinite dimensional. We claim that in this
case, there exists a left infinite path $p$ in ${\rm supp}(M')$ such
that $p \,\cap \, {\rm supp}(L)$ is infinite. Assume first that $f$
is an epimorphism. Since ${\rm supp}(L)$ is infinite and ${\rm
supp}(L) \subseteq {\rm supp}(M')$ with ${\rm supp}(M')$
socle-finite, there exists a left infinite path
$$p: \;\cdots \to x_3 \to x_2 \to x_1$$
in ${\rm supp}(M')$ such that infinitely many $x_i$ lie in the
support of $L$, showing the claim in this case. Suppose now that $f$
is a monomorphism.  Since $M'$ is infinite dimensional, ${\rm
supp}(M')$ contains a left infinite path $p$, and ${\rm supp}(M')
\subseteq {\rm supp}(L)$ yields the claim in this case.

Now, we show that $g$ is an epimorphism.  Suppose first that $M'$ is
finite dimensional.  Then $\eta$ is an almost split sequence in
$\rep^+(Q)$, and hence $L$ is finitely presented. Now, since
$\rep^+(Q)$ is Krull-Schmidt, $L$ has an indecomposable infinite
dimensional direct summand $L_0$ and from \cite[Lemma 4.13(2)]{BLP},
the restriction of $g$ to $L_0$ is an epimorphism.  In particular,
$g$ is an epimorphism. Suppose that $M'$ is infinite dimensional
while $g$ is a monomorphism. By the above claim, there exists a left
infinite path $p$ in ${\rm supp}(M')$ such that infinitely many
vertices of $p$ lie in ${\rm supp}(L) \subseteq {\rm supp}(M)$,
contradicting the fact that ${\rm supp}(M)$ is top-finite and $Q$ is
interval-finite. Hence, $g$ is an epimorphism.

Suppose now that $L_1,L_2$ are two infinite dimensional
representations such that $L_1 \oplus L_2$ is a direct summand of
$E$. Then we have epimorphisms $g_1: L_1 \to M$ and $g_2: L_2 \to
M$. Since $M$ is infinite dimensional, there exists a right infinite
path
$$y_1 \to y_2 \to y_3 \to \cdots$$
in ${\rm supp}(M)$. Since ${\rm supp}(M')$ is socle-finite, there
exists some $y_j$ with $y_j \not \in {\rm supp}(M')$. Then,
\begin{eqnarray*} {\rm dim}\, M(y_j) &=& {\rm
dim}\,(M')(y_j) + {\rm dim}\, M(y_j)\\ &=& {\rm dim}\, E(y_j)\\
&\ge& {\rm dim}\,L_1(y_j) + {\rm dim}\, L_2(y_j)\\ &\ge& {\rm dim}\,
M(y_j) + {\rm dim} \,M(y_j),
\end{eqnarray*}
which is a contradiction.  This shows that if $E=E_1 \oplus E_2$,
then one of $E_1,E_2$ is finite dimensional.  This proves the lemma.
\end{proof}

The following proposition says that the category $\rrep(Q)$ contains
\emph{most} of the Auslander-Reiten theory of $\rep(Q)$.

\begin{Prop} \label{propAR}
Let $\eta: \;\; 0 \to X \stackrel{u}{\rightarrow} Y
\stackrel{v}{\rightarrow} Z \to 0$ be a short exact sequence in
$\rep(Q)$. Then $\eta$ is almost split in $\rep(Q)$ if and only if
it is almost split in $\rrep(Q)$. In this case, $X \in \rep^-(Q)$,
$Z \in \rep^+(Q)$ and either
\begin{enumerate}[$(1)$] \item  $Y = Y_1 \oplus
    Y_2$ with $Y_1$ indecomposable doubly-infinite and $Y_2$ zero or
    indecomposable finite dimensional,
    \item The sequence lies in $\rep^-(Q)$,
    \item The sequence lies in $\rep^+(Q)$.
\end{enumerate}

\end{Prop}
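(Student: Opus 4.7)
The plan is to first establish the biconditional, and then use the structure of almost split sequences in $\rep(Q)$ to describe $Y$ case by case.

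For the forward direction, I would assume $\eta$ is almost split in $\rep(Q)$. The result of \cite{Pa} recalled in Section~1 forces $X \in \rep^-(Q)$ and $Z \in \rep^+(Q)$, so by Lemma~\ref{Ext+-} the extension is finite and $Y \in \rrep(Q)$. Because $\rrep(Q)$ is a full subcategory of $\rep(Q)$ containing $X$, $Y$, and $Z$, any non-retraction $h \colon W \to Z$ in $\rrep(Q)$ remains a non-retraction in the ambient category $\rep(Q)$, hence factors through $v$; fullness places the factoring morphism inside $\rrep(Q)$. The dual argument handles $u$, so $\eta$ is almost split in $\rrep(Q)$.

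For the backward direction, assume $\eta$ is almost split in $\rrep(Q)$. The key step is to prove $Z \in \rep^+(Q)$; dually one obtains $X \in \rep^-(Q)$. Once this is achieved, $Z$ is non-projective in $\rep(Q)$ as well (since it is non-projective in $\rrep(Q)$ and the indecomposable projectives $P_x$ of $\rep^+(Q)$ are also projective in $\rep(Q)$), so the BLP result from Section~1 yields an almost split sequence $\eta'$ in $\rep(Q)$ ending at $Z$; by the forward direction $\eta'$ is almost split in $\rrep(Q)$, and uniqueness of almost split sequences in $\rrep(Q)$ forces $\eta \cong \eta'$. To prove $Z \in \rep^+(Q)$, I decompose $Y = \bigoplus_i Y_i$ via Krull--Schmidt and observe that each projection induces an irreducible morphism $Y_i \to Z$ in $\rrep(Q)$. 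Proposition~\ref{irred} then shows that if $Z \notin \rep^+(Q)$, every $Y_i$ must lie in $\rep^-(Q)$, so $Y \in \rep^-(Q)$ and consequently $X$ and $Z$ do too by abelianness of $\rep^-(Q)$; this immediately rules out the doubly-infinite alternative for $Z$. The remaining subcase, namely $\eta$ lying entirely in $\rep^-(Q)$ with $Z$ infinite dimensional inside $\rep^-(Q)$, is the main obstacle. My plan is to transfer the almost split property from $\rrep(Q)$ to $\rep^-(Q)$ by fullness, and then invoke the dual of the BLP correspondence from Section~1, which identifies almost split sequences in $\rep^-(Q)$ with those almost split sequences of $\rep(Q)$ whose right-hand term is finite dimensional; concretely, I would apply the dual of Lemma~\ref{infdimrep2} to $Z$ to produce a minimal left almost split morphism $Z \to N_1 \oplus N_2$ with $N_1$ doubly-infinite in $\rep(Q)$, then combine this sequence (which is almost split in $\rrep(Q)$ by the forward direction) with $\eta$ via the AR-mesh relation in $\rrep(Q)$ to obtain the required contradiction.

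Once the equivalence is established, the three possibilities for $Y$ fall out from the structure of almost split sequences in $\rep(Q)$. If $Z$ is finite dimensional, then the BLP statement recalled in Section~1 says the sequence is almost split already in $\rep^-(Q)$ and therefore lies in $\rep^-(Q)$, giving case~$(2)$. Dually, if $X$ is finite dimensional, the sequence lies in $\rep^+(Q)$, giving case~$(3)$. Otherwise both $X$ and $Z$ are infinite dimensional, and Lemma~\ref{infdimrep2} (together with its analogues for the preprojective and preinjective components of $\rep^+(Q)$, proved by the same epimorphism-vs-pointwise-dimension argument as in that lemma) pins down $Y = Y_1 \oplus Y_2$ with $Y_1$ indecomposable doubly-infinite and $Y_2$ zero or indecomposable finite dimensional, which is case~$(1)$. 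The hard part, as indicated, will be completing the contradiction step in the backward direction, since it is there that the fullness-plus-duality argument alone does not suffice and a geometric witness coming from Lemma~\ref{infdimrep2} appears to be needed.
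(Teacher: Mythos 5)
Your overall architecture matches the paper's: the forward direction via the result of \cite{Pa} plus Lemma~\ref{Ext+-}, the backward direction via uniqueness of almost split sequences together with Proposition~\ref{irred}, and the trichotomy for $Y$ via Lemma~\ref{infdimrep2}. The genuine gap sits exactly where you flag it. By insisting on proving $Z\in\rep^+(Q)$ head-on, you strand yourself in the subcase where $\eta$ lies in $\rep^-(Q)$ with $Z$ infinite dimensional, and your proposed escape is not an argument: the dual of Lemma~\ref{infdimrep2} would describe the minimal \emph{left} almost split morphism out of $Z$, i.e.\ the mesh starting at $Z$, and it is not explained how information about that mesh contradicts anything about $\eta$, which is the mesh ending at $Z$; moreover that dual lemma requires $Z$ to be regular in $\Ga_{\rep^-(Q)}$, which you have not established.

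The subcase is in fact immediate with tools you already invoke. There $X\in\rep^-(Q)$ is indecomposable and is not one of the $I_x$ (otherwise $\eta$ would split), so the fact recalled in Section~1 yields an almost split sequence $\eta'\colon 0\to X\to E'\to X^+\to 0$ in $\rep(Q)$ with $X^+\in\rep^+(Q)$; by your forward direction $\eta'$ is almost split in $\rrep(Q)$, and uniqueness of the almost split sequence \emph{starting} at $X$ in $\rrep(Q)$ gives $\eta\cong\eta'$, hence $Z\cong X^+\in\rep^+(Q)$ after all. This is precisely how the paper organizes the backward direction: it handles symmetrically the cases $Z\in\rep^+(Q)$ (uniqueness at $Z$) and $X\in\rep^-(Q)$ (uniqueness at $X$), and only when $X\notin\rep^-(Q)$ \emph{and} $Z\notin\rep^+(Q)$ does it apply Proposition~\ref{irred}, which then forces every indecomposable summand of $Y$ into $\rep^+(Q)\cap\rep^-(Q)=\rep^b(Q)$, making $Y$, hence $X$ and $Z$, finite dimensional --- a contradiction. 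A smaller point: your inference ``$Y\in\rep^-(Q)$, hence $X$ and $Z$ too by abelianness'' is shaky as stated, since $\rep^-(Q)$ being abelian as a full subcategory does not by itself guarantee that subobjects and quotients computed in $\rep(Q)$ remain in it; apply Proposition~\ref{irred} to the irreducible maps $X\to Y_i$ instead, which yields $X\in\rep^-(Q)$ directly. The final case analysis for $Y$ agrees with the paper's.
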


\begin{proof}
Suppose that $\eta$ is almost split in $\rep(Q)$. By \cite[Theorem
3.5]{Pa}, $X \in \rep^-(Q)$ and $Z \in \rep^+(Q)$. By Lemma
\ref{Ext+-}, $\eta$ lies in $\rrep(Q)$ and hence is almost split in
$\rrep(Q)$. Suppose now that $\eta$ is almost split in $\rrep(Q)$.
Then $X,Z$ are (strongly) indecomposable.  If $Z \in \rep^+(Q)$,
there is an almost split sequence
$$\xi: \quad 0 \to X' \to Y' \to Z \to 0$$
in $\rep(Q)$ which lies in $\rrep(Q)$ by what we have shown. By the
unicity of almost split sequences, $\xi = \eta$ and we are done. We
can treat similarly the case where $X \in \rep^-(Q)$. Assume now
that $X \not \in \rep^-(Q)$ and $Z \not \in \rep^+(Q)$. By
Proposition \ref{irred}, we must have that $Y \in \rep^+(Q) \cap
\rep^-(Q)$, which is impossible.  This shows that $\eta$ is almost
split in $\rep(Q)$.

Assume now that $\eta$ is almost split in $\rrep(Q)$ (and hence in
$\rep(Q)$). If $X$ is finite dimensional, then (3) holds and if $Z$
is finite dimensional, then (2) holds. Otherwise, since both $X,Z$
are infinite dimensional, $Z$ is regular in $\Ga_{\rep^+(Q)}$ and
from Lemma \ref{infdimrep2}, $Y=Y_1 \oplus Y_2$ with $Y_1$
indecomposable infinite dimensional and $Y_2$ finite dimensional. If
$Y_1 \in \rep^+(Q)$, then $Y \in \rep^+(Q)$ and hence $X \in
\rep^+(Q)$. Being in $\rep^-(Q)$, we get $X \in \rep^b(Q)$, a
contradiction. Hence, $Y_1 \not \in \rep^+(Q)$. Similarly, $Y_1 \not
\in \rep^-(Q)$. Therefore, $Y_1$ is doubly-infinite. Now, if $Y_2$
is non-zero, then there is an irreducible map $Y_2 \to Z$ in
$\rep^+(Q)$.  From \cite[Theorem 4.14]{BLP} (see also Theorem
\ref{regcomponentgen+}), $Y_2$ is indecomposable. This proves that
one of (1), (2) or (3) hold.
\end{proof}

Unfortunately, there may be irreducible maps $M \to N$ in $\rep(Q)$
with $M,N$ indecomposable but not in $\rrep(Q)$.  Hence, the
Auslander-Reiten quiver of $\rrep(Q)$ misses some irreducible maps
of $\rep(Q)$.  However, in the next section, we shall see that these
irreducible morphisms are \emph{isolated} from the irreducible
morphisms in $\rrep(Q)$.

\begin{Exam}
Let $Q$ be the following quiver
$$\xymatrixcolsep{10pt}\xymatrixrowsep{2pt}\xymatrix{& 5\ar@{.}[dl] \ar[dr] && 3\ar[dl] \ar[dr] && 1\ar[dl] \ar[dr]
& \\
&& 4 && 2 && 0}$$ of type $\A_{\infty}$ with zigzag orientation.
Consider the indecomposable sincere representation $M$ such that
$M(i)=k$ for all $i \in \N$.  Let $N$ be the quotient of $M$ by the
simple representation at the vertex $0$. Then the morphism $M \to N$
is irreducible in $\rep(Q)$ with $M,N \not \in \rrep(Q)$.  Observe,
however, that there is no almost split sequence in $\rep(Q)$
starting or ending in $M$ or $N$, by Proposition \ref{propAR}.
\end{Exam}

\begin{Lemma} \label{IrrValuations}
Let $f: M \to N$ be a morphism in $\rep^+(Q)$.  Then $f$ is
irreducible in $\rep^+(Q)$ if and only if $f$ is irreducible in
$\rrep(Q)$.
\end{Lemma}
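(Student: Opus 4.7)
The plan is to treat the two directions separately, with the forward direction being the substantive one.

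For $(\Leftarrow)$: assume $f$ is irreducible in $\rrep(Q)$. Since $\rep^+(Q)$ is a full subcategory of $\rrep(Q)$ and $M,N\in\rep^+(Q)$, any section or retraction for $f$ in $\rep^+(Q)$ would be one in $\rrep(Q)$ too; and any factorization $f=vu$ in $\rep^+(Q)$ is a factorization in $\rrep(Q)$, in which the resulting section of $u$ or retraction of $v$ automatically lies in $\rep^+(Q)$ by fullness. Hence $f$ is irreducible in $\rep^+(Q)$.

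For $(\Rightarrow)$: assume $f$ is irreducible in $\rep^+(Q)$; the same fullness argument shows $f$ is neither a section nor a retraction in $\rrep(Q)$. Let $f=vu$ be a factorization in $\rrep(Q)$ with $u:M\to L$, $v:L\to N$, $L\in\rrep(Q)$. The strategy is to push this down to a factorization in $\rep^+(Q)$. By Lemma \ref{StandardExt}, pick a top-finite successor-closed subquiver $\Omega$ of $Q$ with $L_{\Omega}\in\rep^+(Q)$ and $L/L_{\Omega}\in\rep^-(Q)$, and set $L_1=u(M)+L_{\Omega}\subseteq L$. The quotient $L_1/L_{\Omega}$ is both a quotient of $u(M)\in\rep^+(Q)$ and a sub-representation of $L/L_{\Omega}\in\rep^-(Q)$, hence lies in $\rep^+(Q)\cap\rep^-(Q)=\rep^b(Q)$ and is finite-dimensional; consequently $L_1\in\rep^+(Q)$. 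Let $u_1:M\to L_1$ be the co-restriction of $u$ and $v_1:L_1\to N$ the restriction of $v$, so that $f=v_1u_1$ in $\rep^+(Q)$.

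By irreducibility of $f$ in $\rep^+(Q)$, either $u_1$ is a section or $v_1$ is a retraction in $\rep^+(Q)$. If $v_1$ is a retraction with splitting $t:N\to L_1$, composing with the inclusion $\iota:L_1\hookrightarrow L$ gives $v(\iota t)=v_1t=\operatorname{id}_N$, so $v$ is a retraction in $\rrep(Q)$. The remaining case, where $u_1$ is a section but $v_1$ is not a retraction, is the main technical obstacle: a retraction $s:L_1\to M$ of $u_1$ does not obviously extend along $L_1\hookrightarrow L$ to a retraction of $u$, since the extension $0\to L_1\to L\to L/L_1\to 0$ in $\rrep(Q)$ need not split. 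I expect to handle it by running the symmetric construction on the target side of $v$, producing an alternative factorization of $f$ through a $\rep^+(Q)$-quotient of $L$; combining this with the standing assumption that $v$ is not a retraction in $\rrep(Q)$ should force $u$ to be a section in $\rrep(Q)$ and complete the proof.
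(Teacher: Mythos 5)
Your backward direction and your construction of $L_1$ are fine (though you should justify that $L_1/L_\Omega$ is finite dimensional by noting it is finitely generated, hence top-finite-supported, and sits inside $L/L_\Omega\in\rep^-(Q)$, rather than by invoking closure of $\rep^+(Q)$ under arbitrary quotients, which is not automatic for infinite quivers). The genuine gap is exactly the case you flag and defer: when $u_1$ is a section, a retraction $L_1\to M$ need not extend along $L_1\hookrightarrow L$, and your proposed fix does not close the loop. The dual construction produces a \emph{quotient} $L\to L_2$ through which $f$ factors; there the problematic case flips (a section of $u_2$ descends to one of $u$, but a retraction of $v_2$ need not lift to one of $v$). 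So after running both constructions you are left with the combined bad case ``$u_1$ is a section and $v_2$ is a retraction, yet $u$ is not a section and $v$ is not a retraction,'' and nothing in your outline rules this out. As written, the forward direction is unproved.

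The paper avoids this by replacing $L$ not with the subrepresentation generated by $u(M)$ and $L_\Omega$, but with the restriction $L_\Sigma$ to a carefully chosen \emph{successor-closed, top-finite} subquiver $\Sigma$ of $Q$. Take $\Sigma$ to be generated by: (a) ${\rm supp}(M\oplus N)$; (b) a top-finite successor-closed $\Omega$ with $L_\Omega\in\rep^+(Q)$; and (c) the set $S$ of sources of arrows $\alpha:x\to y$ with $x\notin{\rm supp}(M\oplus N)$, $y\in{\rm supp}(M\oplus N)$ and $L(\alpha)\ne 0$ --- this $S$ contributes only finitely many new vertices precisely because $L\in\rrep(Q)$ is a finite extension over $L_{\Omega'}$ (Lemma \ref{FiniteExtLemma}). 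Then $L_\Sigma\in\rep^+(Q)$, $f=v_\Sigma u_\Sigma$, and the inclusion of $S$ guarantees that no arrow supporting $L$ crosses from outside $\Sigma$ into ${\rm supp}(M\oplus N)$; consequently a retraction $L_\Sigma\to M$ extends by zero to $L\to M$, and a section $N\to L_\Sigma$ composes with $L_\Sigma\hookrightarrow L$. This single factorization transfers \emph{both} alternatives back to $u$ and $v$ simultaneously, which is the step your choice of $L_1$ cannot deliver. To repair your proof, replace $L_1$ by such an $L_\Sigma$.
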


\begin{proof} We only need to prove the necessity. Suppose that $f$
is irreducible in $\rep^+(Q)$. Let $L \in \rrep(Q)$ with two
morphisms $u:M \to L$, $v: L \to N$ such that $f=vu$. Let $S$ be the
set of vertices $x$ in $Q$ such that there exists an arrow $\alpha:
x \to y$ with $x \not \in {\rm supp}(M \oplus N)$, $y \in {\rm
supp}(M \oplus N)$ and $L(\alpha) \ne 0$. Since $L \in \rrep(Q)$,
there exists a top-finite successor-closed subquiver $\Omega$ of
${\rm supp}(L)$ such that $L_{\Omega} \in \rep^+(Q)$ and there is a
finite number of arrows $\beta: a \to b$ with $a \in {\rm supp}(L)
\backslash \Omega$, $b \in \Omega$ and $L(\beta) \ne 0$.
The successor-closed subquiver $\Omega'$ of $Q$ generated by
$\Omega$ and ${\rm supp}(M \oplus N)$ is top-finite and is such that
$L_{\Omega'} \in \rep^+(Q)$. By Lemma \ref{FiniteExtLemma}, since $L
\in \rrep(Q)$, $L$ is a finite extension-representation of $L/L_{\Omega'}$ by
$L_{\Omega'}$.  In particular, $S \cap ({\rm supp}(L) \backslash
\Omega')$ is finite. Therefore, the successor-closed subquiver
$\Sigma$ of $Q$ generated by $S$ and $\Omega'$ is top-finite with
$L_{\Sigma} \in \rep^+(Q)$.  Thus, we have a factorization $f =
v_{\Sigma}u_{\Sigma}$ in $\rep^+(Q)$. Therefore, $u_{\Sigma}$ is a
section or $v_{\Sigma}$ is a retraction. Since $\Sigma$ is
successor-closed in $Q$ and contains the vertices in $S$, one easily
checks that $u_{\Sigma}$ is a section if and only if $u$ is a
section; and $v_{\Sigma}$ is a retraction if and only if $v$ is a
retraction. This shows that $f$ is irreducible in $\rrep(Q)$.
\end{proof}

Let $\Ga$ be a connected component of $\Ga_{\rreps(Q)}$.  Then $\Ga$
is said to be {\it preprojective} if it contains a projective object
in $\rep^+(Q)$ and {\it preinjective} if it contains an injective
object in $\rep^-(Q)$. Otherwise, it is called {\it regular}. A full
convex and connected subquiver $\Delta$ of $\Ga$ is a {\it section}
if it contains no oriented cycle and meets every $\tau$-orbit of
$\Ga$ exactly once. It is \emph{right-most} if $\tau X$ is not
defined for every $X \in \Delta$; and \emph{left-most} if $\tau^- X$
is not defined for every $X \in \Delta$.

\begin{Theo} \label{preproj}
Let $Q$ be connected infinite and strongly locally finite. Then
$\Ga_{\rreps(Q)}$ contains a unique preprojective component
$\mathcal{P}_Q$ having a left-most section $P_Q$ consisting of all
the indecomposable projective objects in $\rep^+(Q)$.  Moreover,
\begin{enumerate}[$(1)$]
    \item If
$Q$ has no right infinite path, then $\mathcal{P}_Q$ is of shape to
$\N Q^{\rm\,op}$.
    \item Otherwise, it
is a predecessor-closed subquiver of $\N Q^{\rm\,op}$ having a
right-most section consisting of the infinite dimensional
representations of $\mathcal{P}_Q$.
\end{enumerate}
\end{Theo}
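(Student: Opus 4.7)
The plan is to identify the preprojective component $\mathcal{P}_Q$ of $\Ga_{\rreps(Q)}$ with the preprojective component $\mathcal{P}_Q^+$ of $\Ga_{\rep^+(Q)}$ described in \cite[Theorem 4.14]{BLP}, and then transport the shape description from there. First, each $P_x$ is projective in $\rep^+(Q)$ by \cite{BLP} and remains projective in the full abelian subcategory $\rrep(Q) \subseteq \rep(Q)$, so no almost split sequence in $\rrep(Q)$ ends at $P_x$ and $\tau P_x$ is undefined. By Lemma \ref{IrrValuations} the irreducible morphisms among the $P_x$ in $\rrep(Q)$ agree with those in $\rep^+(Q)$, which by \cite{BLP} form a copy of $Q^{\rm op}$. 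Hence $P_Q = \{P_x : x \in Q_0\}$ is a connected, acyclic subquiver of $\Ga_{\rreps(Q)}$ on which $\tau$ is everywhere undefined, which establishes the left-most section property once we know the $P_x$ lie in a single component.

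Next, I would combine Proposition \ref{propAR} with Lemma \ref{IrrValuations} to embed the whole of $\mathcal{P}_Q^+$ as a full valued translation subquiver of $\Ga_{\rreps(Q)}$. Proposition \ref{propAR} identifies the almost split sequences of $\rrep(Q)$ ending at any non-projective $Y \in \rep^+(Q)$ with those of $\rep(Q)$, and hence with those of $\rep^+(Q)$ when $Y$ is finite-dimensional; together with Lemma \ref{IrrValuations} this shows that the arrows and valuations of $\mathcal{P}_Q^+$ coincide with those induced on its vertex set from $\Ga_{\rreps(Q)}$. In particular $\mathcal{P}_Q^+$ is contained in a single connected component $\mathcal{P}_Q$ of $\Ga_{\rreps(Q)}$ whose left-most section is $P_Q$.

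The crucial and harder step is the reverse inclusion $\mathcal{P}_Q \subseteq \mathcal{P}_Q^+$, which I would prove by induction on the walk-length in $\Ga_{\rreps(Q)}$ from $P_Q$ to a vertex $Y$. Each irreducible step is classified by Proposition \ref{irred}: starting from a vertex in $\rep^+(Q)$ the next vertex lies again in $\rep^+(Q)$ (case (1)), unless the step is case (4), in which case the target is infinite-dimensional in $\rep^+(Q)$ and the source is doubly-infinite. For finite-dimensional targets the AR sequences of $\rrep(Q)$ and $\rep^+(Q)$ coincide by Proposition \ref{propAR}, ruling out case (4) there. The remaining candidates are the infinite-dimensional vertices of $\mathcal{P}_Q^+$, which by \cite[Theorem 4.14]{BLP} are precisely the infinite-dimensional projectives $P_x$; being projective they have no AR sequence ending at them, and their minimal right almost split map is the inclusion of the radical, whose indecomposable summands are again projective rather than doubly-infinite. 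Hence case (4) attaches no new vertex, giving $\mathcal{P}_Q = \mathcal{P}_Q^+$, and the shape statements (1) and (2) then transfer verbatim from \cite[Theorem 4.14]{BLP}.
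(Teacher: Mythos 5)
Your overall strategy coincides with the paper's: show that the preprojective component $\mathcal{P}_Q^+$ of $\Ga_{\rep^+(Q)}$ is both predecessor-closed and successor-closed inside $\Ga_{\rreps(Q)}$, use Lemma \ref{IrrValuations} to match arrows and valuations, and import the shape from \cite{BLP}. The first two paragraphs are fine. The gap is in the third paragraph, in the claim that the infinite dimensional vertices of $\mathcal{P}_Q^+$ ``are precisely the infinite-dimensional projectives $P_x$''. This is false. Take $Q$ with vertices $0,1,2,\dots$ and arrows $1\to 0$ and $i\to i+1$ for $i\ge 1$. Then $P_0=S_0$ is finite dimensional and non-injective, and ${\rm rad}(P_1)=P_0\oplus P_2$, so $P_1$ is a direct summand of the middle term $E$ of the almost split sequence $0\to S_0\to E\to \tau^{-}S_0\to 0$; hence $\tau^{-}S_0$ is infinite dimensional, non-projective and preprojective. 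Your argument excludes a doubly-infinite predecessor only at projective vertices (via the radical inclusion) and at finite dimensional vertices (where case $(4)$ of Proposition \ref{irred} is vacuous), so it says nothing at a vertex such as $\tau^{-}S_0$, which is exactly where case $(4)$ threatens to attach a doubly-infinite representation to the component.

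The repair is short and is what the paper does. If $Y$ is any non-projective vertex of $\mathcal{P}_Q^+$, then $\tau Y$ (computed in $\rep(Q)$) lies in $\rep^-(Q)$, and since $Y$ is preprojective, $\tau Y$ also lies in $\rep^+(Q)$; hence $\tau Y$ is finite dimensional, the almost split sequence $0\to \tau Y\to E\to Y\to 0$ lies entirely in $\rep^+(Q)$ and is almost split in both $\rep^+(Q)$ and $\rrep(Q)$, so every indecomposable $Z$ admitting an irreducible map $Z\to Y$ in $\rrep(Q)$ is a summand of $E\in\rep^+(Q)$ and is never doubly-infinite. This uniform argument covers finite and infinite dimensional non-projective preprojectives at once and removes the need to split into cases by dimension. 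A second, smaller omission: for an arrow $X\to Y$ out of a finite dimensional preprojective $X$, case $(2)$ of Proposition \ref{irred} (both $X,Y\in\rep^-(Q)$, with $Y$ possibly not finitely presented) is not excluded by your case analysis; it is excluded because the minimal left almost split morphism out of such an $X$ (the middle term of the almost split sequence starting at $X$, which lies in $\rep^+(Q)$ since $X$ is finite dimensional, or $X\to X/{\rm soc}\,X$ if $X$ is injective) has all its indecomposable summands in $\rep^+(Q)$.
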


\begin{proof}
The statement has been proven for the category $\rep^+(Q)$ in
\cite{BLP}. Let $\Ga$ be the unique preprojective component of
$\Ga_{\rep^+(Q)}$ and $X \in \Ga$. If $X$ is not projective in
$\rep^+(Q)$, then one has an almost split sequence
$$\eta: \;\; 0 \to X' \to E \to X \to 0$$
in $\rep(Q)$.  Since $X$ is preprojective in $\rep^+(Q)$, $X'$ is
finite dimensional and the sequence is almost split in $\rep^+(Q)$
and also in $\rrep(Q)$. In particular, we have a minimal right
almost split morphism $E \to X$ in $\rep^+(Q)$ which is also minimal
right almost split in $\rrep(Q)$.  This will also be the case if $X$
is projective in $\rep^+(Q)$. Suppose first that we have an arrow
$\alpha: Y \to X$ in $\Ga_{\rreps(Q)}$. Using what we just proved,
$Y \in \rep^+(Q)$ and we get an arrow $\alpha': Y \to X$ in $\Ga$.
The valuations of $\alpha$ and $\alpha'$ need to coincide by Lemma
\ref{IrrValuations}. Hence, $\Ga$ is a predecessor-closed subquiver
of $\Ga_{\rreps(Q)}$.

Suppose now that we have an arrow $\beta: X \to Y$ in
$\Ga_{\rreps(Q)}$.  We have an irreducible map $f: X \to Y$ in
$\rrep(Q)$, which needs to be irreducible in $\rep^+(Q)$ by
Proposition \ref{irred}. Therefore, we have an arrow $\beta': X \to
Y$ in $\Ga$, and the valuations of $\beta$ and $\beta'$ coincide by
Lemma \ref{IrrValuations}. This shows that $\Ga$ is a
successor-closed subquiver of $\Ga_{\rreps(Q)}$.  Therefore, $\Ga$
is a connected component of $\Ga_{\rreps(Q)}$, and consequently,
since it contains all the $P_x$, $x \in Q_0$, is the unique
preprojective component of $\Ga_{\rreps(Q)}$.
\end{proof}

A dual argument yields the following dual result for the
preinjective component.

\begin{Theo} \label{preinj}
Let $Q$ be connected infinite and strongly locally finite. Then
$\Ga_{\rreps(Q)}$ contains a unique preinjective component
$\mathcal{I}_Q$ having a right-most section $I_Q$ consisting of all
the indecomposable injective objects in $\rep^-(Q)$. Moreover,
\begin{enumerate}[$(1)$]
    \item If
$Q$ has no left infinite path, then $\mathcal{I}_Q$ is isomorphic to
$\N^- Q^{\rm\,op}$.
    \item Otherwise, it
is a successor-closed subquiver of $\N^- Q^{\rm\,op}$ having a
left-most section consisting of the infinite dimensional
representations of $\mathcal{I}_Q$.
\end{enumerate}
\end{Theo}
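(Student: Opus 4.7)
The plan is to mirror the proof of Theorem \ref{preproj} via duality, relying on the fact that $D_Q \colon \rep(Q) \to \rep(Q^{\rm\,op})$ is a duality interchanging $\rep^+(Q)$ and $\rep^-(Q^{\rm\,op})$, and hence exchanging preprojective and preinjective components. The description of the unique preinjective component $\it\Gamma$ of $\it\Gamma_{\rep^-(Q)}$ with the stated shape in (1) and (2) already appears in \cite{BLP}; the task is to show that $\it\Gamma$ embeds as a connected component of $\it\Gamma_{\rreps(Q)}$ and that its right-most section consists of all indecomposable injectives in $\rep^-(Q)$.

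First I would verify that almost split sequences in $\rep^-(Q)$ ending in a preinjective object remain almost split in $\rrep(Q)$. Let $X \in \it\Gamma$. If $X$ is not injective in $\rep^-(Q)$, there exists an almost split sequence
\[
0 \to X \to E \to X'' \to 0
\]
in $\rep(Q)$ with $X'' \in \rep^+(Q)$ by the almost split sequence result recalled in Section~1. Since $X$ is preinjective in $\rep^-(Q)$, $X''$ is finite dimensional, so the sequence lies in $\rep^-(Q) \subseteq \rrep(Q)$ and is almost split there. If $X$ is injective in $\rep^-(Q)$, the canonical quotient $X \to X/\mathrm{soc}(X)$ is minimal left almost split in $\rep(Q)$ by \cite{BLP}, and (as in the proof of Lemma \ref{lemmairr}) it remains minimal left almost split in $\rrep(Q)$. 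In either case, the minimal left almost split morphism out of $X$ in $\rrep(Q)$ coincides with the one in $\rep^-(Q)$.

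Next I would show that both incoming and outgoing arrows at $X$ in $\it\Gamma_{\rreps(Q)}$ are arrows in $\it\Gamma$, with matching valuations. For an arrow $X \to Y$ in $\it\Gamma_{\rreps(Q)}$, Proposition \ref{irred} forces $X \in \rep^-(Q)$ or $Y \in \rep^+(Q)$; since $X \in \rep^-(Q)$, the irreducible morphism out of $X$ must land in a summand of the middle term $E$ above (or of $X/\mathrm{soc}(X)$), hence $Y \in \rep^-(Q)$, giving an arrow in $\it\Gamma_{\rep^-(Q)}$. For an arrow $Y \to X$, Proposition \ref{irred} together with the fact that the minimal right almost split morphism into $X$ in $\rrep(Q)$ agrees with the one in $\rep^-(Q)$ (using Proposition \ref{propAR} applied to the almost split sequence ending in $X$) yields $Y \in \rep^-(Q)$. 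The dual of Lemma \ref{IrrValuations}, which is immediate either by the same argument applied to the socle-finite, predecessor-closed situation, or directly via the duality $D_Q$, guarantees that an irreducible map between indecomposables in $\rep^-(Q)$ stays irreducible in $\rrep(Q)$ and vice versa, so the valuations agree. Consequently $\it\Gamma$ is both predecessor- and successor-closed in $\it\Gamma_{\rreps(Q)}$, hence a connected component. Since $\it\Gamma$ contains all the $I_x$, $x \in Q_0$, it is the unique preinjective component, and its right-most section $I_Q$ is exactly these injectives.

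The main obstacle is checking that incoming arrows at $X$ really come only from $\rep^-(Q)$; the almost split case is handled automatically, but when $X$ is injective in $\rep^-(Q)$ the sink map is the quotient by the socle, and one must confirm that any irreducible $Y \to X$ in $\rrep(Q)$ factors through this quotient, ruling out the doubly-infinite case of Proposition \ref{irred}(4). This is where the duality of the argument of Theorem \ref{preproj} must be applied carefully: since $X/\mathrm{soc}(X) \in \rep^-(Q)$, any factor must land in $\rep^-(Q)$, and the description of the shape, together with the equality of valuations in (1) and (2), transfers from Theorem \ref{preproj} applied to $Q^{\rm\,op}$ via $D_Q$.
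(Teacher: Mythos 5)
Your proposal is correct and is essentially the paper's own proof: the paper disposes of this theorem with the single sentence that ``a dual argument'' to Theorem \ref{preproj} applies, and your write-up is exactly that dualization (the preinjective component of $\Ga_{\rep^-(Q)}$ from \cite{BLP}, the source maps in $\rep^-(Q)$ surviving to $\rrep(Q)$, Proposition \ref{irred} controlling the incoming arrows, and the dual of Lemma \ref{IrrValuations} matching the valuations). The only blemish is in your final paragraph: $X\to X/{\rm soc}(X)$ is the \emph{source} map, not the sink map, and incoming arrows at an infinite-dimensional injective $X$ need no factorization argument at all, since cases (1) and (4) of Proposition \ref{irred} would force $X\in\rep^+(Q)\cap\rep^-(Q)=\rep^b(Q)$, contradicting infinite dimensionality.
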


Recall from \cite{BLP} or \cite{Ri} that a valued translation quiver
is said to be of (finite) {\it wing type} if it is isomorphic to the
following translation quiver with trivial valuations\,:
$$\xymatrixrowsep{16pt} \xymatrixcolsep{20pt}
\xymatrix@!=0.1pt{&&&& \circ \ar[dr]&&&&\\
&&&\circ \ar[ur]\ar[dr] && \circ \ar[dr]&&&\\
&& \circ\ar[ur]  && \circ\ar[ur]  && \circ
&&\\
}$$ \vspace{-15 pt}
$$\iddots \;\;\ddots \;\; \iddots \;\;\;  \ddots \; \;\iddots\;\;\;\, \ddots$$
\vspace{-21 pt}
$$\xymatrixrowsep{16pt}
\xymatrixcolsep{20pt}\xymatrix@!=0.1pt{& \circ\ar[dr]  && \circ   &\cdots& \circ\ar[dr]   && \circ \ar[dr] & \\
\circ \ar[ur] && \circ \ar[ur]  &\cdots&&\cdots& \circ\ar[ur]
 && \circ}\vspace{6pt}
$$

The following theorem was proven in \cite{BLP}.  An indecomposable
representation $M$ in $\rep^+(Q)$ is \emph{pseudo-projective} if it
is not projective and the almost split sequence
$$0 \to M' \to E \to M \to 0$$
in $\rep(Q)$ is such that $M'$ is infinite dimensional.

\begin{Theo}[Bautista, Liu and Paquette]\label{regcomponentgen+} Let $Q$ be an infinite, connected and strongly locally finite quiver. Let $\Ga$ be a regular component of $\Ga_{\rep^+(Q)}$.

\begin{enumerate}[$(1)$]
\item If $\Ga$ has no infinite dimensional or pseudo-projective representation, then it is of shape $\mathbb{Z} \mathbb{A}_\infty$.
\item If $\Ga$ has infinite dimensional but no pseudo-projective representations, then it is of shape $\mathbb{N}^- \mathbb{A}_\infty$ and its right-most section is a left infinite path.
\item If $\Ga$ has pseudo-projective but no infinite dimensional representations, then it is of shape $\mathbb{N}\mathbb{A}_\infty$ and its left-most section is a right infinite path.
\item  If $\Ga$ has both pseudo-projective and infinite dimensional representations, then $\Ga$ is finite of wing type.
\end{enumerate}
\end{Theo}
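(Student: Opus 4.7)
The proof proceeds by applying Riedtmann's structure theorem to $\Ga$ as a (stable-up-to-boundary) translation quiver, and the bulk of the work is to show that $\Ga$ is ``thin'' at every vertex. The first step is to prove that every indecomposable $M \in \Ga$ has at most two immediate predecessors and two immediate successors in $\Ga$, with trivial valuations on all arrows. When $M$ is regular and finite dimensional, the almost split sequence of $M$ in $\rep(Q)$ lies inside $\rep^b(Q)$ and the bound is the standard one for finite dimensional almost split sequences. When $M$ is regular and infinite dimensional, Lemma \ref{infdimrep2} supplies the key input: the minimal right almost split morphism $h : N_1 \oplus N_2 \to M$ in $\rep(Q)$ has exactly one doubly-infinite summand $N_1$ and at most one finite dimensional summand $N_2$. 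Dually, one handles the almost split sequence starting at a pseudo-projective $M$.

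The second step is to verify that no indecomposable in a regular component is $\tau$-periodic. This uses hereditariness of $\rep(Q)$ together with the Auslander-Reiten formula $D\Hom(\tau^- X, Y) \cong \Ext^1(Y, X)$: a periodic orbit would force the supports of representatives to stabilize, yet the supports must migrate across $Q$ because a regular component contains neither $P_x$ nor $I_x$, contradicting interval-finiteness of $Q$. Combining this with the previous step, Riedtmann's theorem then yields $\Ga \cong \Z\Delta$ for a tree $\Delta$ of maximum vertex-valency two, so $\Delta$ is of type $\mathbb{A}_\infty$, $\mathbb{A}_\infty^\infty$ or finite $\mathbb{A}_n$, after which one further excludes $\mathbb{A}_\infty^\infty$ by the one-sided existence of $\tau$ or $\tau^-$.

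The case distinction (1)--(4) is then controlled by which $\tau$-translates fail to exist inside $\rep^+(Q)$. Pseudo-projective representations are, by definition, exactly those non-projective $M \in \rep^+(Q)$ for which $\tau M$ leaves $\rep^+(Q)$, so they constitute a left boundary of $\Ga$. Dually, infinite dimensional regular representations in $\rep^+(Q)$ are exactly those for which $\tau^-$ is undefined in $\Ga_{\rep^+(Q)}$: they are not finitely co-presented, so the almost split sequence starting at them in $\rep(Q)$ does not restrict to $\rep^+(Q)$. In case (1) neither boundary occurs and $\Ga \cong \Z\mathbb{A}_\infty$; in cases (2) and (3) one boundary gives $\N^- \mathbb{A}_\infty$ or $\N\mathbb{A}_\infty$, and the boundary section is an infinite path whose vertices are all infinite dimensional (resp. all pseudo-projective); in case (4) both boundaries are present and the thin orbit structure forces $\Ga$ to be finite of wing type.

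The main obstacle is the second step: ruling out $\tau$-periodicity and, simultaneously, showing that the tree $\Delta$ cannot branch. Both rest on the combination of the support combinatorics for objects of $\rep^+(Q)$ (top-finite support, hence supports migrate in a controlled direction under $\tau$) with interval-finiteness of $Q$, which together prevent a finite region of $\Ga$ from absorbing arbitrarily long $\tau$-orbits or from supporting a vertex of valency $\geq 3$. The remaining details (trivial valuations, matching the boundary sections with the infinite paths of $Q$) then follow formally from Lemma \ref{infdimrep2} and the standard AR-theory of $\rep^+(Q)$ developed in \cite{BLP}.
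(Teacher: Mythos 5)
First, a structural point: the paper does not prove this statement. It is quoted verbatim from \cite{BLP} (it is Theorem 4.14 there); the header attributes it to Bautista, Liu and Paquette and the sentence immediately preceding it reads ``The following theorem was proven in \cite{BLP}.'' So there is no in-paper argument to compare yours against --- the intended ``proof'' is the citation. Your attempt to reprove it from scratch is therefore judged on its own merits, and as a roadmap it is plausible, but it has gaps at exactly the points where the real work of \cite{BLP} lies.

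Concretely: (a) The thinness claim for a finite dimensional regular vertex is not ``the standard bound for finite dimensional almost split sequences'' --- no such general bound of two middle summands exists; even for finite hereditary algebras the fact that regular components are thin is a theorem requiring additive-function or defect arguments, and for infinite $Q$ it needs the support and dimension-counting machinery of \cite{BLP}. (b) Your exclusion of $\tau$-periodicity asserts that ``supports must migrate across $Q$,'' but that is precisely what must be proven: a finite dimensional regular module lives on a finite subquiver, tame finite hereditary algebras do possess $\tau$-periodic regular modules, and you give no reason why embedding such a subquiver into an infinite connected $Q$ kills periodicity (it does, but via a comparison of $\tau$ computed in $\rep(Q)$ with $\tau$ over the subquiver, which you never make). (c) In case (4), finiteness of $\Ga$ does not follow from ``both boundaries are present'': a priori one could have an infinite wing-shaped region bounded on the left by an infinite path of pseudo-projectives and on the right by an infinite path of infinite dimensional representations, so the quasi-length must actually be bounded by an argument. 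On the positive side, your use of Lemma \ref{infdimrep2} is not circular within this paper (it is stated and proven before the theorem, and its analogue in \cite{BLP} precedes Theorem 4.14 there), and your identification of pseudo-projectives and infinite dimensional regulars as the left and right boundaries of the translation is correct and is indeed the mechanism behind the four cases.
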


We have a similar theorem for the category $\rrep(Q)$.

\begin{Theo}\label{regcomponentgen} Let $Q$ be a connected infinite and strongly locally
finite quiver. Let $\Ga$ be a regular component of $\Ga_{\rreps(Q)}$.
\begin{enumerate}[$(1)$]
\item If $\Ga$ has no infinite dimensional representation, then it is of shape $\mathbb{Z} \mathbb{A}_\infty$.
\item If $\Ga$ has infinite dimensional representations all lying in $\rep^+(Q)$, then it is of shape $\mathbb{N}^-\hskip -0.5pt \mathbb{A}_\infty$ and its right-most section is a left infinite path.
\item If $\Ga$ has infinite dimensional representations all lying in $\rep^-(Q)$, then it is of shape $\mathbb{N}\mathbb{A}_\infty$ and its left-most section is a right infinite path.
\item Otherwise, $\Ga$ is finite of wing type and contains exactly one doubly-infinite representation.
\end{enumerate}
\end{Theo}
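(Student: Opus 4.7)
The plan is to reduce each case to Theorem \ref{regcomponentgen+} (and its dual) by comparing $\Ga$ with the regular components of $\Ga_{\rep^+(Q)}$ and $\Ga_{\rep^-(Q)}$. Three tools drive the argument. Lemma \ref{IrrValuations} and its dual show that irreducibility and valuations in $\rep^+(Q)$ and in $\rrep(Q)$ agree on common pairs of indecomposables. Proposition \ref{propAR} identifies AR sequences in $\rrep(Q)$ with those in $\rep(Q)$, forces the left end of any such sequence into $\rep^-(Q)$ and the right end into $\rep^+(Q)$, and says that infinite dimensional ends produce a doubly-infinite summand in the middle term; consequently, a doubly-infinite indecomposable in $\rrep(Q)$ carries no $\tau$-translate on either side in $\Ga_{\rreps(Q)}$. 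Finally, Proposition \ref{irred} rules out a direct irreducible morphism between a representation in $\rep^-(Q)\setminus\rep^+(Q)$ and one in $\rep^+(Q)\setminus\rep^-(Q)$.

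In case (1), all vertices of $\Ga$ lie in $\rep^b(Q)$ and Lemma \ref{IrrValuations} identifies $\Ga$ with some regular component $\Ga^+$ of $\Ga_{\rep^+(Q)}$. A pseudo-projective vertex in $\Ga^+$ would, by Proposition \ref{propAR}(1), be linked by an irreducible morphism to a doubly-infinite middle-term summand, contradicting our assumption; hence $\Ga^+$ has neither pseudo-projective nor infinite dimensional vertices, and Theorem \ref{regcomponentgen+}(1) gives $\Ga\cong\Z\A_\infty$. In case (2), the hypothesis forces $\Ga\subseteq\rep^+(Q)$, so again $\Ga$ matches some regular component $\Ga^+$ of $\Ga_{\rep^+(Q)}$; pseudo-projective vertices are excluded as in case (1), while infinite dimensional ones are present, so Theorem \ref{regcomponentgen+}(2) yields the shape $\N^-\A_\infty$ with right-most section a left infinite path. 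Case (3) is symmetric.

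The main work is case (4). I first produce a doubly-infinite vertex $Y\in\Ga$ as follows: let $\Ga^+$ be the regular component of $\Ga_{\rep^+(Q)}$ containing an infinite dimensional representation of $\Ga\cap\rep^+(Q)$, and $\Ga^-$ the analogous component in $\Ga_{\rep^-(Q)}$. If $\Ga^+$ is of type (4) in Theorem \ref{regcomponentgen+} it contains a pseudo-projective vertex $Z$; the AR sequence ending at $Z$ has infinite dimensional left end in $\rep^-(Q)$, hence by Proposition \ref{propAR}(1) a doubly-infinite middle-term summand, which lies in $\Ga$. The dual argument applies if $\Ga^-$ is of type (4). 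If both $\Ga^\pm$ are of type (2) the connectivity of $\Ga$, together with the edge restrictions from Proposition \ref{irred} forbidding direct $\rep^+\setminus\rep^-$ to $\rep^-\setminus\rep^+$ edges, forces a bridging vertex in $\Ga$ which one shows must be doubly-infinite.

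Next, the arrows at $Y$ come only from AR sequences
$$0\longrightarrow X\longrightarrow Y\oplus W\longrightarrow Z\longrightarrow 0$$
with $X\in\rep^-(Q)$ and $Z\in\rep^+(Q)$ both infinite dimensional and $W$ zero or a finite dimensional indecomposable. This forces $Z$ to be both infinite dimensional and pseudo-projective in $\rep^+(Q)$, so by Theorem \ref{regcomponentgen+}(4) the regular component $\Ga^+$ is finite of wing type, and dually for $\Ga^-$. Gluing these two finite wings in $\Ga_{\rreps(Q)}$ along $Y$ using Lemma \ref{IrrValuations} and its dual produces a finite wing-shaped component of $\Ga$. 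The main obstacle is uniqueness of $Y$: the plan is to argue that $Y$ is the only vertex of $\Ga$ carrying no $\tau$-translate on either side, which in a finite wing forces it to be the unique apex at which the two legs meet; any second doubly-infinite representation would introduce a second apex, incompatible with the wing combinatorics inherited from Theorem \ref{regcomponentgen+}(4).
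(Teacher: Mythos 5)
Your overall strategy coincides with the paper's: cases (1)--(3) are reduced to Theorem \ref{regcomponentgen+} via Lemma \ref{IrrValuations}, and case (4) is assembled from a wing in $\Ga_{\rep^+(Q)}$, a wing in $\Ga_{\rep^-(Q)}$, and one doubly-infinite vertex supplied by Proposition \ref{propAR}. Cases (1)--(3) are essentially fine (one small slip: Proposition \ref{propAR}(1) produces a doubly-infinite middle summand only when \emph{both} ends of the almost split sequence are infinite dimensional, so for a pseudo-projective but finite dimensional end term you should argue instead that $\tau X$ itself would be an infinite dimensional vertex of $\Ga$ lying in $\rep^-(Q)\setminus\rep^+(Q)$, contradicting the hypothesis of case (2)).

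The genuine gap is in case (4), at the valuations of the two arrows attached to the doubly-infinite vertex $Y$. Being ``of wing type'' means isomorphic to the wing translation quiver \emph{with trivial valuations}, and Lemma \ref{IrrValuations} and its dual --- the only tools you invoke for the gluing --- say nothing about the arrows $X_{n,1}\to Y$ and $Y\to X_{n,2}$, since $Y$ lies in neither $\rep^+(Q)$ nor $\rep^-(Q)$. The paper closes this with a separate argument: if there were an irreducible map $E'\to X_{n,2}^{\,r}$ with $r\ge 2$, it would be an epimorphism, and evaluating dimensions at a vertex $x_j$ of a right-infinite path in ${\rm supp}(X_{n,2})$ chosen outside ${\rm supp}(X_{n,1}\oplus X_{n-1,2})$ gives ${\rm dim}\,E'(x_j)={\rm dim}\,X_{n,2}(x_j)\ge 2\,{\rm dim}\,X_{n,2}(x_j)$, a contradiction; dually for $X_{n,1}\to E'$. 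You need this (or an equivalent) step. Two smaller soft spots: your uniqueness argument for $Y$ (``a second doubly-infinite representation would introduce a second apex'') is circular as phrased --- the clean route is that Proposition \ref{irred} forbids arrows between doubly-infinite vertices, so any doubly-infinite vertex is adjacent to $\Ga'\cup\Ga''$, and Proposition \ref{propAR} shows the only vertices of $\Ga'\cup\Ga''$ admitting a doubly-infinite neighbour are $X_{n,1}$ and $X_{n,2}$, whose almost split sequence has exactly one doubly-infinite middle summand; and you omit the degenerate sub-case where $\Ga$ meets neither $\rep^+(Q)$ nor $\rep^-(Q)$ and is therefore a single doubly-infinite vertex.
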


\begin{proof}
If $\Ga$ contains only finite dimensional representations, then for
any $X \in \Ga$, one has almost split sequences
$$0 \to X' \to E \to X \to 0$$
and
$$0 \to X \to E' \to X'' \to 0$$
in $\rrep(Q)$ which are almost split in $\rep^+(Q)$. Therefore,
$\Ga$ is a component of the Auslander-Reiten quiver of $\rep^+(Q)$
by Lemma \ref{IrrValuations}.  Hence, $\Ga$ is of shape
$\Z\A_{\infty}$ by Theorem \ref{regcomponentgen+}.

Suppose now that $\Ga$ has infinite dimensional representations all
lying in $\rep^+(Q)$. Then for any $X \in \Ga$, the almost split
sequence
$$0 \to X' \to E \to X \to 0$$
in $\rrep(Q)$ is an almost split sequence in $\rep^+(Q)$. Moreover,
by Proposition \ref{irred}, every irreducible morphism $X \to Y$ in
$\rrep(Q)$ with $Y$ indecomposable is such that $Y \in \rep^+(Q)$.
This shows that $\Ga$ is a (predecessor-closed) connected component
of the Auslander-Reiten quiver of $\rep^+(Q)$ by Lemma
\ref{IrrValuations}. Hence $\Ga$ is of shape $\N^-\A_{\infty}$ and
its right-most section is a left infinite path by Theorem
\ref{regcomponentgen+}.

Dually, if $\Ga$ has infinite dimensional representations all lying
in $\rep^-(Q)$, then $\Ga$ is of shape $\mathbb{N}\mathbb{A}_\infty$
and its left-most section is a right infinite path.

Consider now the case where $\Ga$ contains an infinite dimensional
representation in $\rep^+(Q)$ and an infinite dimensional
representation in $\rep^-(Q)$.  From Proposition \ref{irred}, we get
that the full subquiver $\Ga'$ of $\Ga$ consisting of the
representations in $\rep^+(Q)$ is successor-closed in $\Ga$.
Similarly, the full subquiver $\Ga''$ of $\Ga$ consisting of the
representations in $\rep^-(Q)$ is predecessor-closed in $\Ga$. Now,
by Lemma \ref{IrrValuations}, $\Ga'$ is a connected component of the
Auslander-Reiten quiver of $\rep^+(Q)$. If $\Ga'$ is left stable as
a translation quiver, then we see that $\Ga = \Ga'$, a
contradiction.  Hence, $\Ga'$ is not left stable and contains
infinite dimensional representations in $\rep^+(Q)$. Since $\Ga'$
does not contain any of the $P_x$ and $I_x$, $x \in Q_0$, we see
that $\Ga'$ is a regular component of the Auslander-Reiten quiver of
$\rep^+(Q)$. Hence, $\Ga'$ is of wing type with trivial valuations
by Theorem \ref{regcomponentgen+}. We get similarly that $\Ga''$ is
of wing type. Hence, $\Ga' \cup \Ga''$ is a full subquiver of $\Ga$
which has trivial valuations and which is of the form
$$\xymatrixrowsep{22pt} \xymatrixcolsep{25pt}
\xymatrix@!=0.1pt{
&&&X_{n,1} \ar[dr] && X_{n,2} \ar[dr]&&&\\
&& X_{n-1,1} \ar[ur]  && X_{n-1,2} \ar[ur]  && X_{n-1,3}
&&\\
}$$ \vspace{-10 pt}
$$\iddots \;\;\;\;\ddots \;\;\;\; \iddots \;\;\;\;\;  \ddots \;\;\; \;\iddots\;\;\;\;\;\, \ddots$$
\vspace{-21 pt}
$$\;\;\xymatrixrowsep{22pt}
\xymatrixcolsep{25pt}\xymatrix@!=0.1pt{& X_{2,1}\ar[dr]  && X_{2,2}   &\hspace{-2 pt}\cdots& X_{2,n-1}\ar[dr]   && X_{2,n}\ar[dr] & \\
X_{1,1} \ar[ur] && X_{1,2} \ar[ur]  &\cdots&&\cdots& X_{1,n}\ar[ur]
 && X_{1,n+1}}\vspace{6pt}
$$
where the $X_{1,1}, \ldots, X_{n,1}$ are the infinite dimensional
representations in $\Ga''$ and $X_{n,2},X_{n-1,3},\ldots,X_{1,n+1}$
are the infinite dimensional representations in $\Ga'$. By
Proposition \ref{irred}, the vertices in $\Ga \backslash (\Ga' \cup
\Ga'')$ are all doubly-infinite representations. Since $\Ga'$ is
successor-closed in $\Ga$ and $\Ga''$ is predecessor-closed in
$\Ga$, the arrows in $\Ga$ attached to a vertex in $\Ga' \cup \Ga''$
and which are not in $\Ga' \cup \Ga''$ start in $X_{n,1}$ or end in
$X_{n,2}$. By Proposition \ref{propAR}, since $X_{n,1}, X_{n,2}$ are
infinite dimensional, there is an almost split sequence
$$\eta: \;\; 0 \to X_{n,1} \to E \to X_{n,2} \to 0$$
in $\rrep(Q)$ where $E \cong X_{n-1,2} \oplus E' $ and $E'$ is
doubly infinite.  By Proposition \ref{propAR}, it is clear that the
only arrow ending in $E'$ is $X_{n,1} \to E'$ and the only arrow
starting in $E'$ is $E' \to X_{n,2}$. Hence, $\Ga$ is of shape

$$\xymatrixrowsep{22pt} \xymatrixcolsep{25pt}
\xymatrix@!=0.1pt{&&&& \;E'\ar[dr] &&&&\\
&&&X_{n,1} \ar[dr] \ar[ur]&& X_{n,2} \ar[dr]&&&\\
&& X_{n-1,1} \ar[ur]  && X_{n-1,2} \ar[ur]  && X_{n-1,3}
&&\\
}$$ \vspace{-11 pt}
$$\iddots \;\;\;\;\ddots \;\;\;\; \iddots \;\;\;\;\;  \ddots \;\;\; \;\iddots\;\;\;\;\;\, \ddots$$
\vspace{-18 pt}
$$\;\xymatrixrowsep{22pt}
\xymatrixcolsep{25pt}\xymatrix@!=0.1pt{& X_{2,1}\ar[dr]  && X_{2,2}   &\hspace{-2 pt}\cdots& X_{2,n-1}\ar[dr]   && X_{2,n}\ar[dr] & \\
X_{1,1} \ar[ur] && X_{1,2} \ar[ur]  &\cdots&&\cdots& X_{1,n}\ar[ur]
 && X_{1,n+1}}\vspace{6pt}
$$
and it remains to show that the arrows $X_{n,1} \to E'$ and $E' \to
X_{n,2}$ are trivially valued. Let $(e,d)$ be the valuation of $E'
\to X_{n,2}$.  It is clear that $d=1$. Suppose that there is an
irreducible map $f: E' \to X_{n,2}^r$, $r \ge 2$, in $\rrep(Q)$.
Since ${\rm supp}(E')$ contains ${\rm supp}(X_{n,2})$ properly, $f$
is an epimorphism. Let
$$p: \;\; x_0 \to x_1 \to \cdots$$ be a right infinite
path in ${\rm supp}(X_{n,2})$.  Since ${\rm supp}(X_{n,1})$ is
socle-finite and $X_{n-1,2}$ is finite dimensional, there exists an
integer $j$ with $x_j \not \in {\rm supp}(X_{n,1} \oplus
X_{n-1,2})$. The almost split sequence $\eta$ then yields ${\rm
dim}E'(x_j) = {\rm dim}X_{n,2}(x_j) \ne 0$. However, since $f$ is an
epimorphism,
$${\rm dim}X_{n,2}(x_j) = {\rm dim}E'(x_j) \ge 2{\rm dim}X_{n,2}(x_j),$$
a contradiction.  Hence, we get that the valuation of the arrow $E'
\to X_{n,2}$ in $\Ga$ is $(1,1)$.  Similarly, the valuation of the
arrow $X_{n,1} \to E'$ in $\Ga$ is $(1,1)$.  This shows that $\Ga$
is of wing type.

If $\Ga$ does not contain representations in $\rep^+(Q) \cup
\rep^-(Q)$, then it contains only doubly-infinite representations.
By Proposition \ref{irred}, $\Ga$ is a trivial component, and hence
is necessarily of wing type. \qedhere

\end{proof}

\begin{Remark}
(1) The last theorem says in particular that there exists a
bijection between the isomorphism classes of doubly-infinite
representations and the regular components of wing type in
$\Ga_{\rreps(Q)}$.

\smallskip

\noindent (2) If $Q$ is connected strongly locally finite, then
there exists a doubly-infinite representation in $\rrep(Q)$ if and
only if $Q$ has a left-infinite path and a right-infinite path.
Hence, if $\Ga_{\rreps(Q)}$ has both a regular component of shape
$\N \A_\infty$ and of shape $\N^-\A_\infty$, then it needs to have a
regular component of wing type.

\smallskip

\noindent (3) The Auslander-Reiten quiver of $\rep^+(Q)$ is a
successor-closed subquiver of $\Ga_{\rreps(Q)}$.  It is obtained by
removing the infinite dimensional non-finitely presented
representations in $\Ga_{\rreps(Q)}$.  Similarly, the
Auslander-Reiten quiver of $\rep^-(Q)$ is a predecessor-closed
subquiver of $\Ga_{\rreps(Q)}$.  It is obtained by removing the
infinite dimensional non-finitely co-presented representations in
$\Ga_{\rreps(Q)}$.

\smallskip

\noindent (4) The Auslander-Reiten quiver of $\rrep(Q)$ has a
symmetric valuation by Theorems \ref{preproj}, \ref{preinj} and
\ref{regcomponentgen}.

\smallskip

\noindent (5) If $M$ is doubly-infinite and has no simple submodule
and no simple quotient, then $\{M\}$ is a trivial component of
$\Ga_{\rreps(Q)}$.
\end{Remark}

An additive Krull-Schmidt $k$-category $\mathcal{C}$ is said to be a
\emph{left Auslander-Reiten category} if every indecomposable object
in $\mathcal C$ is the domain of a minimal left almost split
epimorphism or is the starting term of an almost split sequence; a
\emph{right Auslander-Reiten category} if every indecomposable
object in $\mathcal C$ is the co-domain of a minimal right almost
split monomorphism or the ending term of an almost split sequence;
and an \emph{Auslander-Reiten category} if it is a left and a right
Auslander-Reiten category; compare \cite[(2.6)]{L3}.

\smallskip

The following proposition follows easily from our previous results.

\begin{Prop} Let $Q$ be a strongly locally finite quiver.
\begin{enumerate}[$(1)$]
    \item The category $\rrep(Q)$ is a left Auslander-Reiten category if and only
    if $Q$ has no right-infinite path.
    \item The category $\rrep(Q)$ is a right Auslander-Reiten category if and only
    if $Q$ has no left-infinite path.
    \item The category $\rrep(Q)$ is an Auslander-Reiten category if and only
    if $Q$ has no infinite path.
\end{enumerate}
\end{Prop}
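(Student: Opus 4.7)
The plan is to prove (1); assertion (2) follows by applying (1) to $Q^{\mathrm{op}}$ via the duality $D_Q: \rrep(Q) \to \rrep(Q^{\mathrm{op}})$ (which exchanges $\rep^+(Q)$ with $\rep^-(Q^{\mathrm{op}})$ and converts left almost split morphisms to right almost split ones), and (3) is the conjunction of (1) and (2). By Proposition \ref{propAR}, an indecomposable $M \in \rrep(Q)$ is the starting term of an almost split sequence if and only if $M \in \rep^-(Q)$ and $M \not\cong I_x$ for any $x \in Q_0$; and for $M = I_x$, the canonical epimorphism $I_x \to I_x/\mathrm{soc}(I_x)$ is minimal left almost split in $\rep(Q)$ with codomain in $\rep^-(Q) \subseteq \rrep(Q)$, hence in $\rrep(Q)$. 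Thus the left Auslander-Reiten property for $\rrep(Q)$ reduces to: every indecomposable $M \in \rrep(Q) \setminus \rep^-(Q)$ admits a minimal left almost split morphism in $\rrep(Q)$.

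For sufficiency, if $Q$ has no right-infinite path then any top-finite full subquiver of $Q$ is finite, so $\rep^+(Q) = \rep^b(Q) \subseteq \rep^-(Q)$; closure of $\rep^-(Q)$ under extensions gives $\rrep(Q) = \rep^-(Q)$, and the criterion above is vacuous.

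For necessity, assume $Q$ contains a right-infinite path $x_0 \to x_1 \to \cdots$, so that $P_{x_0}$ is infinite-dimensional, lies in $\rep^+(Q) \setminus \rep^-(Q)$, and does not start an almost split sequence. I claim $P_{x_0}$ admits no minimal left almost split morphism in $\rrep(Q)$, contradicting the hypothesis. Given a candidate $f: P_{x_0} \to E = \bigoplus_{i=1}^r E_i$ in Krull-Schmidt form, consider for each $n \geq 1$ the quotient $\pi_n: P_{x_0} \twoheadrightarrow P_{x_0}/\mathrm{rad}^n(P_{x_0})$. Its codomain is finite-dimensional (supported on the finite set of vertices at distance less than $n$ from $x_0$) and lies in $\rrep(Q)$, and the map is a non-section, so $\pi_n$ factors through $f$. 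Since the codomain of $\pi_n$ is nonzero at $x_{n-1}$, some indecomposable summand $E_{i_0}$ receives a nonzero morphism from $P_{x_0}$ (so $E_{i_0}(x_0) \neq 0$) and has $E_{i_0}(x_j) \neq 0$ for arbitrarily large $j$. By Proposition \ref{PropProjFiniteInj}, this infinite-support tail $\{x_j\}$ must lie in the top-finite projective part of $E_{i_0}$ (a socle-finite quiver contains no right-infinite path); and the requirement $E_{i_0}(x_0) \neq 0$, together with the projectivity of this part, pins down $E_{i_0} \cong P_{x_0}$. Locality of $\mathrm{End}(P_{x_0})$ then turns the nonzero component $f_{i_0}$ into an isomorphism, exhibiting $f$ as a section and yielding the contradiction.

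The main obstacle is the precise identification $E_{i_0} \cong P_{x_0}$ in the last step; carrying this out rigorously requires combining Proposition \ref{PropProjFiniteInj} with the interval-finiteness of $Q$ and a careful treatment of the case where $x_0$ has predecessors in $Q$ (which would introduce additional horizontal arrows in the preprojective component and complicate the summand bookkeeping).
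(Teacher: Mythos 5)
Your overall strategy (reduce via Proposition \ref{propAR} to the objects outside $\rep^-(Q)$, get sufficiency from $\rrep(Q)=\rep^-(Q)$ when there is no right-infinite path, and attack $P_{x_0}$ for necessity) is sound, and the first two thirds of the necessity argument are fine: the factorizations of the non-sections $\pi_n:P_{x_0}\to P_{x_0}/{\rm rad}^n P_{x_0}$ do force some indecomposable summand $E_{i_0}$ of the codomain to satisfy $E_{i_0}(x_j)\ne 0$ for infinitely many $j$, with a nonzero component $f_{i_0}:P_{x_0}\to E_{i_0}$. But the step you yourself flag as the ``main obstacle'' is not merely technical --- it is false as stated. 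Knowing that $E_{i_0}(x_0)\ne 0$, that $E_{i_0}$ is supported on infinitely many $x_j$, and that the infinite tail lies in the projective part $\Sigma_P$ of Proposition \ref{PropProjFiniteInj} does \emph{not} pin down $E_{i_0}\cong P_{x_0}$: if $y$ is a predecessor of $x_0$, then $P_y$ satisfies all three conditions and receives a nonzero non-invertible morphism from $P_{x_0}$; even when $x_0$ is a source, $E_{i_0}$ need only have a projective part containing $P_{x_0}$-like pieces, not be isomorphic to $P_{x_0}$. So the contradiction you aim for (forcing $f$ to be a section) is not reached, and the proof of necessity is incomplete.

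The gap disappears if you use the definition of a left Auslander--Reiten category more carefully: for the non-AR-sequence alternative one only needs to rule out a minimal left almost split \emph{epimorphism} out of $P_{x_0}$, not every minimal left almost split morphism. Your own factorizations already do this: $\pi_n=g_nf$ gives ${\rm Ker}\,f\subseteq{\rm Ker}\,\pi_n={\rm rad}^nP_{x_0}$ for all $n$, and $\bigcap_n{\rm rad}^nP_{x_0}=0$ because every element of $P_{x_0}(y)$ is a combination of the finitely many (hence bounded-length) paths from $x_0$ to $y$. Thus any left almost split morphism from $P_{x_0}$ is a monomorphism, hence not an epimorphism (a monic epi would be an isomorphism, contradicting that it is not a section); combined with $P_{x_0}\notin\rep^-(Q)$ and Proposition \ref{propAR}, this finishes necessity with no need to identify $E_{i_0}$. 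For context, the paper gives no written proof of this proposition; it is meant to be read off from Theorems \ref{preproj}, \ref{preinj} and \ref{regcomponentgen}, which show that $\tau^-$ is undefined precisely on the infinite dimensional representations of $\rep^+(Q)$ (the right-most sections), so your route is genuinely different from the intended one but salvageable as above.
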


\section{The Auslander-Reiten quiver of $\rep(Q)$}

In this section, again, $Q$ stands for a connected strongly locally
finite quiver.  Although $\rep(Q)$ is, in general, not
Hom-finite, it is true that every indecomposable
object in $\rep(Q)$ has a local endomorphism algebra; see\cite{GR}.
 Thus, one can construct the Auslander-Reiten quiver $\Ga_{\rep(Q)}$ of
$\rep(Q)$ as defined in Section $5$.
The objective of this section is to show that the Auslander-Reiten
components of $\rrep(Q)$ are connected components of
$\Ga_{\rep(Q)}$.

\smallskip

We start with the following lemma, where the proof is inspired from
the proof of \cite[Proposition 2.1]{Pa}.

\begin{Lemma} \label{lemmasection}
Let $N \in \rep(Q)$ with a sub-representation $M$ and suppose we
have a chain
$$0 = L_0 \subset L_1 \subset L_2 \subset \cdots$$
of finitely generated proper sub-representations of $N$ with every
inclusion $M \to M+L_i$ being a section.
Suppose moreover that the union of the $M + L_i$ is $N$. Then the
inclusion $M \to N$ is a section.
\end{Lemma}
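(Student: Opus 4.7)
The plan is to produce a retraction $r : N \to M$, witnessing the splitting. I will build, via a Mittag-Leffler inverse-limit argument, a compatible family of retractions $r_i : M + L_i \to M$ (meaning $r_{i+1}|_{M+L_i} = r_i$ for every $i \ge 0$) and then glue them. Once the family is in hand, the gluing step is automatic: since $N = \bigcup_i (M+L_i)$, I set $r(n) := r_i(n)$ for any $i$ with $n \in M + L_i$; compatibility makes this independent of the choice of $i$, and the starting choice $r_0 = \id$ on $M + L_0 = M$ guarantees $r|_M = \id$, so $r$ is the desired retraction.

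For the inverse-limit setup, let $R_i$ denote the (non-empty, by the section hypothesis) set of retractions $M + L_i \to M$. Any two elements of $R_i$ differ by a morphism vanishing on $M$, hence factoring through $\bar L_i := (M+L_i)/M$, so $R_i$ is a torsor under the $k$-vector space $G_i := \Hom(\bar L_i, M)$. The inclusions $M + L_i \hookrightarrow M + L_{i+1}$ induce restriction maps $R_{i+1} \to R_i$, equivariant under the restriction maps $G_{i+1} \to G_i$ coming from $\bar L_i \hookrightarrow \bar L_{i+1}$; a compatible family is exactly an element of $\varprojlim R_i$.

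The key technical step --- and the main obstacle --- is to show $\varprojlim R_i \neq \emptyset$. I plan to obtain this from the Mittag-Leffler condition on $\{G_i\}$. Since $L_i$ is finitely generated, so is $\bar L_i \cong L_i/(L_i \cap M)$, giving a surjection $\bigoplus_{j=1}^{n_i} P_{x_j} \twoheadrightarrow \bar L_i$ for finitely many vertices $x_j \in Q_0$. Applying the contravariant left-exact functor $\Hom(-,M)$ yields an injection
$$G_i = \Hom(\bar L_i, M) \hookrightarrow \bigoplus_{j=1}^{n_i} \Hom(P_{x_j}, M) = \bigoplus_{j=1}^{n_i} M(x_j),$$
which is finite-dimensional because $M \in \rep(Q)$ is locally finite dimensional. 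Hence the descending chain ${\rm Im}(G_j \to G_i) \supseteq {\rm Im}(G_{j+1} \to G_i)$ of subspaces of the finite-dimensional $G_i$ must stabilize, giving Mittag-Leffler. Passing to the stable images $R_i^* := \bigcap_{j \ge i} {\rm Im}(R_j \to R_i)$, which are non-empty cosets of the stabilized subspaces in $G_i$, one checks that the restriction maps $R_{i+1}^* \to R_i^*$ become surjective (given $r_i \in R_i^*$, take a preimage in $R_j$ with $j$ larger than the stabilization indices of both $R_i^*$ and $R_{i+1}^*$, and restrict to $R_{i+1}$). Countable choice then produces a coherent sequence $(r_i) \in \varprojlim R_i^* = \varprojlim R_i$, and the gluing described in the first paragraph concludes the proof.
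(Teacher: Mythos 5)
Your proposal is correct and follows essentially the same route as the paper: the paper also reduces to a Mittag--Leffler inverse-limit argument on the finite-dimensional spaces of (normalized) retractions $M+L_i \to M$, with finite-dimensionality coming from $L_i$ being finitely generated and $M$ being locally finite dimensional, followed by stabilization of images, surjectivity on the stable images, and gluing over the direct limit $N = \varinjlim (M+L_i)$. The only cosmetic difference is that you phrase the set of retractions as a torsor under $\Hom\bigl((M+L_i)/M, M\bigr)$, whereas the paper embeds it as an affine subset of the finite-dimensional space $V_i$ of maps $g$ with $g q_i$ a scalar multiple of $\id_M$.
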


\begin{proof}
Set $M_i = M + L_i$ for $i \ge 0$. For $0 \le i < j$, denote the
inclusion $M \to M_i$ by $q_i$ and the inclusion $M_i \to M_j$ by
$q_{i,j}$. For $i \ge 0$, we have a short exact sequence
$$0 \to M \cap L_i \to M \oplus L_i \to M_i \to 0$$
giving a monomorphism
$$\varphi_i : \Hom(M_i,M) \to \Hom(M,M) \oplus
\Hom(L_i,M),$$ sending a morphism $g$ to $(g_1,g_2)$ where $g_1$ is
the restriction of $g$ to $M$ and $g_2$ is the restriction of $g$ to
$L_i$. Let $V_i$ denote the subspace of $\Hom(M_i,M)$ of the
morphisms $g$ for which $gq_i$ is a scalar multiple of $\id_M$.
Since $V_i$ is the pre-image of $k\langle\id_M\rangle \oplus
\Hom(L_i,M)$, which is finite dimensional, we see that $V_i$ is
finite dimensional. A morphism $g \in V_i$ for which $gq_i=\id_M$ is
called \emph{normalized}. By assumption, each $V_i$ contains a
normalized morphism and hence is non-zero. Now, one has a non-zero
map
$$g_i : V_{i+1} \to V_i$$
which is induced by $q_{i,i+1}$ and sends a normalized map to a
normalized one. By assumption, we have a normalized map $v_i : M_i
\to M$ in $V_i$ such that
$$v_iq_{j,i} = v_iq_{i-1,i}q_{i-2,i-1}\cdots q_{j,j+1} = g_{j}\cdots g_{i-1}(v_i)$$ is
normalized in $V_{j}$ for $0 \le j < i$.  Let $0\ne M_{ij} = {\rm
Im}(g_jg_{j+1}\cdots g_{i-1})$ for $0 \le j < i$ with $M_{ii} =
V_i$. The chain
$$M_{jj} \supseteq M_{j+1,j} \supseteq M_{j+2,j} \cdots$$
of finite dimensional $k$-vector spaces yields an integer $r_j \ge
j$ for which $0 \ne M_{r_j,j} = M_{k,j}$ whenever $k \ge r_j$.
Moreover, each such $M_{r_j,j}$ contains a normalized map.  Then the
maps $g_i$ clearly induce non-zero maps $$\overline{g}_i :
M_{r_{i+1},i+1} \to M_{r_i,i}.$$  We claim that these maps are
surjective.  Let $u \in M_{r_i,i}$.  For every positive integer
$r>i+1$, $u \in {\rm Im}(g_ig_{i+1} \cdots g_{r-1})$ and hence,
there exists an element $u_r \in {\rm Im}(g_{i+1} \cdots g_{r-1})$
such that ${g_i}(u_r)=u$.  But then $u_{r_{i+1}} \in
M_{r_{i+1},i+1}$ is such that $\overline{g_i}(u_{r_{i+1}})=u$,
showing the claim.  Now, set $u_0 \in M_{r_0,0}$ be a normalized
map.  Then there exists $u_1 \in M_{r_1,1}$ such that
$\overline{g}_0(u_1) = u_0$.  Observe that if $u_1$ is not
normalized, then there exists $\alpha \in k\backslash\{0\}$ such
that $\alpha u_1$ is normalized and hence that
$\overline{g}_0(\alpha u_1)=\alpha u_0$ is normalized, showing that
$\alpha = 1$.  Hence, $u_1$ is normalized.  Choose such $u_i \in
M_{r_i,i}$ for all positive integers $i$.  Hence, for $i \ge 0$, we
have that $u_iq_i=\id_M$ and $u_{i+1}q_{i,i+1}=u_i$. Since $N$ is
the union of the $M_i$, it is also the direct limit of the $M_i$.
Therefore, the family of morphisms $u_i : M_i \to M$ yields a unique
morphism $h : N \to M$ such that $hr_i= u_i$ for $i\ge 0$, where
$r_i : M_i \to N$ is the inclusion. This shows that the inclusion $M
\to N$ is a section.
\end{proof}

As an immediate consequence of the preceding lemma, we get the
following.

\begin{Cor} \label{Corirred}
Let $f: M \to N$ be an irreducible monomorphism with $M,N \in
\rep(Q)$.  Then $N = {\rm Im}(f) + L$ where $L$ is finitely
generated. In particular, ${\rm Coker}(f)$ is finitely generated
indecomposable.
\end{Cor}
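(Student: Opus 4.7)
The plan is to argue by contradiction that $N = {\rm Im}(f) + L$ for some finitely generated $L \subset N$, and then to deduce the cokernel statements. Identifying $M$ with ${\rm Im}(f)$, I will assume that $N \neq M + L$ for every finitely generated sub-representation $L$ of $N$; this forces $N$ itself not to be finitely generated. Since $N$ is locally finite dimensional over the (countable) vertex set $Q_0$, it is countably generated, so I can fix a chain $0=L_0\subset L_1\subset L_2\subset\cdots$ of finitely generated proper sub-representations of $N$ whose union is $N$. The idea is to feed this chain into Lemma~\ref{lemmasection}.

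The key observation making the chain work is that for each $i$, factoring $f$ as $M\hookrightarrow M+L_i\hookrightarrow N$, the second map is a monomorphism; and in the abelian category $\rep(Q)$ a monomorphism which is also a retraction (split epimorphism) is automatically an isomorphism. Since $M+L_i\neq N$ by hypothesis, that inclusion cannot be a retraction, so the irreducibility of $f$ forces $M\hookrightarrow M+L_i$ to be a section for every $i$. All hypotheses of Lemma~\ref{lemmasection} are then in place (note $\bigcup_i(M+L_i)=M+\bigcup_iL_i=N$), and the lemma delivers that $M\hookrightarrow N$ is itself a section, contradicting $f$ being irreducible. This contradiction gives $N={\rm Im}(f)+L$ with $L$ finitely generated, and then ${\rm Coker}(f)\cong L/(L\cap {\rm Im}(f))$ is finitely generated.

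For the indecomposability of $C:={\rm Coker}(f)$, I would suppose $C=C_1\oplus C_2$ with both summands nonzero and set $N_i=\pi^{-1}(C_i)$, where $\pi:N\to C$ is the quotient. Then $M\subsetneq N_i\subsetneq N$, $N_1\cap N_2=M$, and $N_1+N_2=N$. The same observation (that a sub-representation inclusion which is a retraction is an isomorphism), applied to each factorization $M\hookrightarrow N_i\hookrightarrow N$ using $N_i\neq N$, forces each $M\hookrightarrow N_i$ to be a section, so $N_i=M\oplus A_i$ with $A_i\cong C_i$. A short check using $N_1\cap N_2=M$ yields $A_1\cap A_2=0$ and $M\cap(A_1+A_2)=0$, whence $N=M\oplus A_1\oplus A_2$; but then $M$ would be a direct summand of $N$, making $f$ a section and contradicting irreducibility.

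The main conceptual hinge, and the reason Lemma~\ref{lemmasection} applies at all, is the elementary fact that a sub-representation inclusion which is a retraction must already be an isomorphism. Once this is spotted, the two alternatives in the definition of irreducibility collapse to ``section or already all of $N$,'' and both parts of the corollary follow cleanly from the chain lemma together with the short indecomposability argument.
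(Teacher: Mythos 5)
Your proof is correct and follows essentially the same route as the paper: the first assertion is exactly the paper's reduction to Lemma~\ref{lemmasection}, and you usefully make explicit the step the paper leaves implicit, namely that each inclusion $M \to M+L_i$ must be a section because the proper monomorphism $M+L_i \to N$ cannot be a retraction. The only divergence is that for the indecomposability of ${\rm Coker}(f)$ you give a direct splitting argument ($N=M\oplus A_1\oplus A_2$ would make $f$ a section), whereas the paper simply cites the classical Auslander--Reiten result; your argument is sound.
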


\begin{proof}
Since $Q$ is connected and strongly locally finite, it has a
countable number of vertices and we can find, for $i \ge 0$, a chain
$$0 = L_0 \subseteq L_1 \subseteq L_2 \subseteq \cdots $$
of finitely generated sub-representations of $N$ such that the union
of the $M + L_i$ is equal to $N$.  If the above chain is not
stationary, then using Lemma \ref{lemmasection}, we get that $f$ is
a section, which is impossible. This shows the first part of the
statement. The fact that ${\rm Coker}(f)$ is indecomposable follows
from the well known result that the cokernel of an irreducible
monomorphism in an abelian category is indecomposable; see
\cite{AuR}.
\end{proof}

\begin{Lemma}
Let $f: M \to N$ be an irreducible monomorphism with $M \in
\rrep(Q)$ and $N \in \rep(Q)$.  Then $N \in \rrep(Q)$.
\end{Lemma}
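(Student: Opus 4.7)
The plan is to apply Corollary \ref{Corirred} to write $N = f(M) + L$ with $L \subseteq N$ finitely generated, and then to exploit the standard decomposition of $M \in \rrep(Q)$ from Lemma \ref{StandardExt}, which provides a top-finite successor-closed subquiver $\Omega$ of $Q$ with $M_\Omega \in \rep^+(Q)$ and $M/M_\Omega \in \rep^-(Q)$. Identifying $f$ with the inclusion, let $\Omega^*$ be the successor-closure of $\Omega \cup {\rm supp}(L)$ in $Q$; this is still top-finite (since ${\rm supp}(L)$ is top-finite, $L$ being finitely generated) and successor-closed. Set $N_1 := N_{\Omega^*}$, the restriction of $N$ to $\Omega^*$: it is a sub-representation of $N$ (because $\Omega^*$ is successor-closed), equals $M_{\Omega^*} + L$ at each vertex, and satisfies $N/N_1 \cong M/M_{\Omega^*}$ via the modularity identities $M + N_1 = N$ (because $L \subseteq N_1$) and $M \cap N_1 = M_{\Omega^*}$.

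I would first verify that this $N_1$ is the correct candidate on the $\rep^-$-side. The quotient $M_{\Omega^*}/M_\Omega$ has support in $\Omega^* \cap {\rm supp}(M_2)$, the intersection of a top-finite with a socle-finite subquiver, hence finite by the easy lemma recalled in Section~1; so $M_{\Omega^*}/M_\Omega$ is finite dimensional, $M_{\Omega^*}$ lies in $\rep^+(Q)$, and $M/M_{\Omega^*}$ is a quotient of $M_2 \in \rep^-(Q)$ by a finite-dimensional subrepresentation, hence lies in $\rep^-(Q)$. Thus $N/N_1 \in \rep^-(Q)$. Next I would verify that the short exact sequence $0 \to N_1 \to N \to N/N_1 \to 0$ is a finite extension via Lemma \ref{RecognizeFiniteExt}: an arrow $\alpha : x \to y$ with $x \in {\rm supp}(N/N_1) \subseteq {\rm supp}(M_2) \setminus \Omega^*$ and $y \in {\rm supp}(N_1)$ satisfies $N(x) = M(x)$ (since $x \notin {\rm supp}(L)$) and therefore $N(\alpha) \ne 0$ if and only if $M(\alpha) \ne 0$; splitting on whether $y \in \Omega$ or $y \in \Omega^* \setminus \Omega$, the first case contributes only finitely many such arrows by the finite-extension structure of $M$ applied to Lemma \ref{RecognizeFiniteExt}, and in the second case $y$ ranges over the finite set ${\rm supp}(M) \cap (\Omega^* \setminus \Omega)$, which has finitely many incoming arrows by local finiteness of $Q$.

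The main step, and the principal obstacle, is to show $N_1 = M_{\Omega^*} + L \in \rep^+(Q)$. From the short exact sequence $0 \to M_{\Omega^*} \to N_1 \to C \to 0$ (using $C_{\Omega^*} = C$ because ${\rm supp}(C) \subseteq {\rm supp}(L) \subseteq \Omega^*$) and the fact that $\rep^+(Q)$ is closed under extensions, it suffices to show that the cokernel $C = N/f(M)$ is in fact finitely presented, and not merely finitely generated as given by Corollary \ref{Corirred}. I expect this to be where the irreducibility of $f$ enters in an essential way: if $C$ failed to be finitely presented, one should be able to produce an intermediate subrepresentation $M \subsetneq M' \subsetneq N$ yielding a factorization $M \to M' \to N$ of $f$ in which $M \to M'$ is not a section and $M' \to N$ is not a retraction, contradicting the irreducibility of $f$. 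Once $C \in \rep^+(Q)$ is secured, $N_1 \in \rep^+(Q)$ follows, and the finite extension $0 \to N_1 \to N \to N/N_1 \to 0$ with $N_1 \in \rep^+(Q)$ and $N/N_1 \in \rep^-(Q)$ exhibits $N$ as a finite extension of the form defining $\rrep(Q)$, completing the proof.
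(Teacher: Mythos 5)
Your setup is sound and essentially parallels the paper's: reduce to the inclusion $N=M+L$ with $L$ finitely generated (Corollary \ref{Corirred}), restrict to a suitable top-finite successor-closed subquiver, check that the quotient is finitely co-presented and that the resulting extension is finite. Those peripheral verifications are correct. But the step you yourself flag as ``the principal obstacle'' --- showing that $C=N/f(M)$ is finitely presented rather than merely finitely generated --- is exactly the entire content of the lemma, and you have not proved it; you have only asserted that irreducibility ``should'' produce an intermediate subrepresentation $M\subsetneq M'\subsetneq N$ through which $f$ factors with neither map trivial. That heuristic does not work as stated: nothing about the failure of finite presentation of $C$ produces such an $M'$, and indeed the obvious candidates $M'=M+L_i$ are already handled by Lemma \ref{lemmasection}/Corollary \ref{Corirred}, which only yield finite generation of $C$. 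The factorizations that actually exploit irreducibility in the paper do not pass through subrepresentations of $N$ at all.

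What the paper does instead is substantially more delicate. It first refines the subquiver (via Proposition \ref{PropProjFiniteInj}) so that the restriction $M_{\Theta}$ is \emph{projective}, not just finitely presented. Then, assuming $N_{\Theta}$ is finitely generated but not finitely presented, it takes a projective presentation whose syzygy is an infinite sum $\bigoplus_{i\ge 1}P_{y_i}$, and for infinitely many $i$ pushes out along $P_{y_i}\to S_{y_i}$ to get non-split extensions $0\to S_{y_i}\to E_i\to N_{\Theta}\to 0$, which are then extended to non-split extensions of $N$ itself by $S_{y_i}$. Pulling back along $f$ and using the projectivity of $M_{\Theta}$, the pullback splits, giving factorizations $f=v_i'u_iw_i'$ with $v_i'$ not a retraction; irreducibility forces $u_iw_i'$ to be a section, whence $f$ restricted to $\Sigma\setminus\{y_i\}$ is a section. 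Finally a limit argument in the style of Lemma \ref{lemmasection}, run over an exhaustion by predecessor-closed subquivers each omitting some $y_{i_j}$, shows $f$ itself is a section --- the contradiction. None of this machinery (projectivity of the restriction, extensions by simple modules, the splitting of the pullback, the second limit argument) appears in your proposal, so the proof has a genuine gap at its central step.
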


\begin{proof}
We may assume that $f$ is the inclusion. By Corollary
\ref{Corirred}, $N = M + L$ where $L$ is finitely generated.  In
particular, ${\rm supp}(L)$ is top-finite.

Let $\Sigma$ be the support of $N$.  Since $M \in \rrep(Q)$ and $N =
M+L$, there exists a top-finite successor-closed subquiver $\Omega$
of $\Sigma$ such that $M$ is a finite extension-representation of
$M/M_{\Omega} \in \rep^-(Q)$ by $M_{\Omega}\in \rep^+(Q)$ and
$\Omega$ contains the support of $L$. Moreover, by Proposition
\ref{PropProjFiniteInj}, there exists a co-finite successor-closed
subquiver $\Sigma_P$ of $\Omega$ such that $M_{\Sigma_P}$ is
projective in $\rep^+(Q)$. Take $\Theta$ to be the successor-closed
subquiver of $Q$ generated by $\Sigma_P$. We still have that
$M_{\Theta} \in \rep^+(Q)$ is projective and $M$ is a finite
extension-representation of $M/M_{\Theta}$ by $M_{\Theta}$. Using
this and since $N=M+L$ with ${\rm supp}(L)$ top-finite, there exists
a co-finite successor-closed subquiver $\Sigma'$ of $\Theta$ such
that every arrow $x \to y$ supporting $N$ with $y \in \Sigma'$ is
such that $x \in \Theta$. We want to show that $N_{\Omega}$ is
finitely presented, or equivalently, that $N_{\Theta}$ is finitely
presented. Suppose it is not the case. First, since $L$ is finitely
generated, we see that $L_{\Theta}$ is finitely generated since the
inclusion $L_{\Theta} \to L$ has a finite dimensional cokernel.
Therefore, $N_{\Theta}$ is finitely generated since $N_{\Theta} =
M_{\Theta} + L_{\Theta}$. Hence, we have a projective resolution of
the form
$$0 \to \textstyle{\bigoplus}_{i=1}^{\infty}P_{y_i} \stackrel{h}{\rightarrow} \textstyle{\bigoplus}_{j=1}^{r}P_{x_j} \to N_{\Theta} \to 0$$
where $h$ is a radical morphism and all $y_i$ lie in $\Theta$ since
$\Theta$ is successor-closed. There exists infinitely many $i$ with
$y_i \in \Sigma'$. Set $I$ to be the set of all such $i$. For each
$i \in I$, the pushout of the the projection
$\textstyle{\bigoplus}_{j=1}^{\infty}P_{y_j} \to S_{y_i}$ with $h$
then yields an exact sequence
$$0 \to S_{y_i} \to E_i \stackrel{v_i}{\rightarrow} N_{\Theta} \to 0$$
which is non-split since $h$ is a radical morphism. Let $E_i'$ be
the following representation. For $x \in Q_0$, we set
$$(E_i')(x) = \left\{%
\begin{array}{ll}
    E_i(x), & \hbox{if $x \in \Theta_0$;} \\
    N(x), & \hbox{otherwise.} \\
\end{array}%
\right.    $$ and for $\alpha \in Q_1$,
$$(E_i')(\alpha) = \left\{%
\begin{array}{ll}
    E_i(\alpha), & \hbox{if $\alpha \in \Theta_1$;} \\
    N(\alpha), & \hbox{if $\alpha: x \to y$ with $x,y \not \in \Theta_1$;} \\
    (v_i)_{y}^{-1}N(\alpha), & \hbox{if $\alpha: x \to y$ with $x \not \in \Theta_1$ and $y \in \Theta_1$, $y \ne y_j$;} \\
    0, & \hbox{otherwise.} \\
\end{array}%
\right.    $$
Since every arrow attached to $y_i$ and supporting $N$ is clearly in
$\Theta$, we can extend the last extension to a non-split short
exact sequence
$$0 \to S_{y_i} \to E_i' \stackrel{v_i'}{\rightarrow} N \to 0$$
where $(v_i')_x = (v_i)_x$ if $x \in \Theta_0$ and $(v_i')_x =
\id_{N(x)}$, otherwise.
Hence, we have a pullback diagram
$$\xymatrixrowsep{16pt} \xymatrixcolsep{20pt}\xymatrix@1{
0 \ar[r] & S_{y_i} \ar[r] \ar@{=}[d] &  F_i \ar[d]^{u_i} \ar[r]^{w_i} & M \ar[d]^{f} \ar[r] & 0\\
0 \ar[r] & S_{y_i} \ar[r] & E_i' \ar[r]^{v_i'} & N \ar[r] & 0}$$
where the restriction of the top row to $\Theta$ splits since
$M_{\Theta}$ is projective. Since $y_i \in \Sigma'$ and every arrow
attached to $y_i$ and supporting $E_i'$ (hence $F_i$) is entirely
contained in $\Theta$, we see that the top row splits. Let $w_i' : M
\to F_i$ such that $w_iw_i' = \id_M$. We have $f = v_i'u_iw_i'$
where $v_i'$ is not a retraction.
Hence, $u_iw_i'$ is a section since $f$ is irreducible in $\rep(Q)$. 
For $i \in I$, let $\Sigma(i) = \Sigma \backslash \{y_i\}$. Since
$(w_i')_{\Sigma(i)}$ and $(v_i')_{\Sigma(i)}$ are isomorphisms, we
see that $f_{\Sigma(i)} \cong (u_i)_{\Sigma(i)}$ is a section for
all $i \in I$. In particular, for $i \in I$, $y_i \in \Sigma$ since
$f$ is not a section. Hence, there exists a sequence
$$\Sigma \backslash \Theta = \Delta(0) \subset \Delta(1) \subset \Delta(2) \subset \dots$$
of predecessor-closed subquivers of $\Sigma$ such that $\Delta(0)$
is co-finite in $\Delta(j)$ for all $j \ge 1$ and the union of the
$\Delta(j)$ is $\Sigma$.  Moreover, for $j \ge 0$, there exists $i_j
\in I$ such that $y_{i_j}$ is not in $\Delta(j)$. By what we have
shown, $f_{\Delta(j)}$ is a section for all $j$.

For $j \ge 0$, set $M_j = M_{\Delta(j)}$, $N_j = N_{\Delta(j)}$ and
$f_j = f_{\Delta(j)}$. Observe that $M_j,N_j \in \rep^-(Q)$ for all
$j$. Let $V_j$ denote the subspace of $\Hom(N_j,M_j)$ of the
morphisms $g$ for which $gf_j$ is a multiple of $\id_{M_j}$. Observe
that $V_j$ is finite dimensional. A morphism $g \in V_j$ for which
$gf_j=\id_{M_j}$ is called normalized. By assumption, each $V_j$
contains a normalized morphism and hence is non-zero. Then one has a
non-zero map
$$g_j : V_{j+1} \to V_j$$
which is the restriction to $\Delta(j)$ and sends a normalized map
to a normalized one.

By assumption, each $V_j$ contains a normalized map $v_j : N_j \to
M_j$ such that
$$(v_j)_{\Delta(l)} = g_{l}\cdots g_{j-1}(v_j)$$ is
normalized in $V_{l}$ for $0 \le l < j$.  Using a similar argument
as in the proof of Lemma \ref{lemmasection},
we can choose $u_j \in V_j$, for $j \ge 0$, such that
$u_jf_j=\id_{M_j}$ and $(u_{j+1})_{\Delta(j)}=u_j$. Since $\Sigma$
is the union of the $\Delta(j)$, we see that $f$ is a section, a
contradiction.  This shows that $N_{\Theta}$ is finitely presented,
and so is $N_{\Omega}$.  Since $N_{\Sigma \backslash \Omega} =
M_{\Sigma \backslash \Omega}$, $N_{\Sigma \backslash \Omega}$ is
finitely co-presented.  Since $M$ is a sub-representation of
$N=M+L$, we see that
$$0 \to N_{\Omega} \to N \to N_{\Sigma
\backslash \Omega} \to 0$$ is finite, showing that $N \in \rrep(Q)$.
\qedhere

\end{proof}

\begin{Lemma}
Let $f: M \to N$ be an irreducible monomorphism in $\rep(Q)$ with
$M$ indecomposable and $N \in \rrep(Q)$.  Then $M \in \rrep(Q)$.
\end{Lemma}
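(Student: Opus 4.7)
The plan is to produce a top-finite successor-closed subquiver $\Omega$ of $Q$ so that $M_{\Omega} \in \rep^+(Q)$ and $M/M_{\Omega} \in \rep^-(Q)$ with finite extension, thereby placing $M$ in $\rrep(Q)$. First, Corollary \ref{Corirred} yields that $C := {\rm Coker}(f)$ is finitely generated and indecomposable, so ${\rm supp}(C)$ is top-finite. Since $N \in \rrep(Q)$, combining Lemma \ref{StandardExt} with Proposition \ref{PropProjFiniteInj} produces a top-finite successor-closed subquiver $\Omega_0$ of $Q$ with $N_{\Omega_0}$ projective in $\rep^+(Q)$ and $N/N_{\Omega_0} \in \rep^-(Q)$, the extension being finite. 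Now ${\rm supp}(C) \setminus \Omega_0 \subseteq {\rm supp}(N/N_{\Omega_0})$, which is socle-finite; since ${\rm supp}(C)$ is top-finite and $Q$ is interval-finite, this intersection is finite (by the argument in the proof of Lemma \ref{Ext+-}). Enlarging $\Omega_0$ by this finite set together with its successors in the socle-finite support of $N/N_{\Omega_0}$ (still finite, by the same reasoning) yields a top-finite successor-closed $\Omega \supseteq \Omega_0$ with ${\rm supp}(C) \subseteq \Omega$ and $\Omega \setminus \Omega_0$ finite; the properties $N_\Omega \in \rep^+(Q)$ and $N/N_\Omega \in \rep^-(Q)$ persist, since $\rep^+(Q)$ is closed under finite dimensional extensions and $\rep^-(Q)$ is closed under quotients by finite dimensional subrepresentations.

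Next, applying the snake lemma to the diagram comparing $0 \to M \to N \to C \to 0$ with its $\Omega$-cutoff, and using ${\rm supp}(C) \subseteq \Omega$, one obtains a short exact sequence $0 \to M_\Omega \to N_\Omega \to C \to 0$ and an isomorphism $M/M_\Omega \cong N/N_\Omega \in \rep^-(Q)$. The arrows from ${\rm supp}(M/M_\Omega)$ to ${\rm supp}(M_\Omega)$ that support $M$ form a subset of the analogous arrows supporting $N$, a finite set since $N$ is a finite extension-representation. Hence, by Lemma \ref{RecognizeFiniteExt}, once $M_\Omega \in \rep^+(Q)$ is proved, the sequence $0 \to M_\Omega \to M \to M/M_\Omega \to 0$ is a finite extension, giving $M \in \rrep(Q)$.

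The main obstacle is thus to show $M_\Omega \in \rep^+(Q)$. With $N_\Omega$ projective in $\rep^+(Q)$, the subrepresentation $M_\Omega$ is itself projective in $\rep(Q)$ by heredity, so $M_\Omega \cong \bigoplus_{j\in J} P_{y_j}$, and it suffices to show that $J$ is finite. I would argue by contradiction, dualizing the strategy of the preceding lemma: an infinite $J$ would permit one to manufacture, for infinitely many $j \in J$, a non-split short exact sequence $0 \to S_{y_j} \to E_j \to N \to 0$ (constructed from the $j$-th projective summand of $M_\Omega$) whose pullback along $f$ splits on the $\Omega$-part thanks to the projectivity of $M_\Omega$ there; the irreducibility of $f$ then forces a section of $f$ on a co-finite predecessor-closed subquiver of ${\rm supp}(M)$ for each such $j$, and the direct-limit argument of Lemma \ref{lemmasection} assembles these partial sections into a global section of $f$, contradicting its irreducibility. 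The indecomposability of $M$ is essential here for organizing the pullback-splittings coherently through $f$.
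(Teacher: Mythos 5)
Your reduction is sound, and the first two paragraphs are essentially a correct (and in one respect tidier) version of what is needed: Corollary \ref{Corirred} gives $N={\rm Im}(f)+L$ with $L$ finitely generated, and enlarging $\Omega_0$ to a top-finite successor-closed $\Omega$ containing ${\rm supp}({\rm Coker}\,f)$ so that $M/M_{\Omega}\cong N/N_{\Omega}\in\rep^-(Q)$, with the supporting arrows for $M$ crossing from ${\rm supp}(M/M_\Omega)$ to ${\rm supp}(M_\Omega)$ forming a subset of those for $N$, cleanly disposes of the co-presented part and of the finiteness of the extension. (Two small repairs: after the enlargement $N_{\Omega}$ is only in $\rep^+(Q)$, not projective, so the decomposition into copies of $P_{y_j}$ should be taken for $M_{\Omega_0}$, the difference $M_\Omega/M_{\Omega_0}$ being finite dimensional; and that decomposition itself needs the observation that the support is top-finite, since $\rep(Q)$ may contain indecomposable projectives not of the form $P_x$.)

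The gap is in your third paragraph, which is where the lemma actually lives. The proposed "dualization" of the preceding lemma does not transfer: there, the non-split sequences $0\to S_{y_i}\to E_i\to N\to 0$ were manufactured by pushout from infinitely many \emph{relations} in a projective presentation of $N_{\Theta}$, i.e.\ from $N_\Theta$ failing to be finitely \emph{presented}. Here $N_{\Omega_0}$ is projective, so it has no relations; what must be bounded is the number of \emph{generators} of the subobject $M_{\Omega_0}$, and an infinite set of generators of a subobject of a finitely generated projective does not produce extensions of $N$ by simples. The assembly step is also not Lemma \ref{lemmasection} as stated (one needs the partial sections to be chosen compatibly along a chain, which in the preceding proof required the separate normalized-morphism inverse-limit argument). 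More importantly, no such machinery is needed, and irreducibility is not the operative hypothesis here: indecomposability of $M$ is. Choose a co-finite successor-closed subquiver $\Omega'$ of $\Omega_0$ such that every arrow supporting $N$ (hence $M$) with endpoint in $\Omega'$ starts in $\Omega_0$; this exists because the extension defining $N$ is finite. If $J$ were infinite, local finite-dimensionality of $M$ would force infinitely many distinct vertices $y_j$, hence some $y_j\in\Omega'$; the summand $P_{y_j}$ of $M_{\Omega_0}$ has support in $\Omega'$ and therefore receives no arrow supporting $M$ from outside $\Omega_0$, so it splits off from $M$ itself, giving $M=P_{y_j}$ and hence $M_{\Omega_0}$ finitely generated --- a contradiction. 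This is the paper's argument; irreducibility of $f$ enters the proof only through Corollary \ref{Corirred}.
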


\begin{proof} By Corollary \ref{Corirred}, we can assume that $f$ is the inclusion and $N =
M+L$ with $L$ finitely generated.  Let $\Sigma$ be the support of
$N$ and $\Sigma_P$ be a successor-closed subquiver of $\Sigma$ such
that $N_{\Sigma_P}$ is projective in $\rep^+(Q)$ and
$N/N_{\Sigma_P}$ is finitely co-presented.  Since $N$ is a finite
extension-representation of $N/N_{\Sigma_P}$ by $N_{\Sigma_P}$,
there exists a co-finite successor-closed subquiver $\Omega$ of
$\Sigma_P$ such that every arrow supporting $N$ with an endpoint in
$\Omega$ lies entirely in $\Sigma_P$. Being a sub-representation of
$N_{\Sigma_P}$, $M_{\Sigma_P}$ is projective. If $M_{\Sigma_P}$ is
not finitely generated, then there exists a vertex $x$ in $\Omega$
such that $P_x$ is a direct summand of $M_{\Sigma_P}$.  Since every
arrow supporting $M$ and attached to ${\rm supp}(P_x) \subseteq
\Omega$ lies in $\Sigma_P$, we see that $P_x$ is a direct summand of
$M$, giving $M=P_x$, a contradiction. Hence, $M_{\Sigma_P}$ is
finitely generated and being projective, is finitely presented.
Since $N=M+L$ where $L$ is finitely generated, there exists a
co-finite predecessor-closed subquiver $\Sigma'$ of $\Sigma
\backslash \Sigma_P$ such that $M_{\Sigma'} = N_{\Sigma'}$ is
finitely co-presented.  Thus, $M/M_{\Sigma_P}$ is also finitely
co-presented. The extension
$$0 \to M_{\Sigma_P} \to M \to M/M_{\Sigma_P} \to 0$$
is finite since
$$0 \to N_{\Sigma_P} \to N \to N/N_{\Sigma_P} \to 0$$
is finite.
\end{proof}

The preceding lemmas with their dual versions and Proposition
\ref{irred} then give the following interesting result.

\begin{Prop}
Every irreducible morphism $M \to N$ between indecomposable objects
in $\rep(Q)$ with one of $M,N$ in $\rrep(Q)$ lies entirely in
$\rrep(Q)$.  In particular, either $M \in \rep^-(Q)$ or $N \in
\rep^+(Q)$.
\end{Prop}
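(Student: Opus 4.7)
The plan is to reduce to Proposition \ref{irred} by first upgrading the hypothesis ``one of $M,N$ is in $\rrep(Q)$'' to ``both $M$ and $N$ are in $\rrep(Q)$'', and then observing that an irreducible morphism in $\rep(Q)$ between two objects of $\rrep(Q)$ is automatically irreducible in $\rrep(Q)$.

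First I would recall the standard fact that an irreducible morphism $f:M\to N$ between indecomposable objects in an abelian category is either a monomorphism or an epimorphism. Together with the two preceding lemmas and their duals, this handles all four possibilities. Explicitly: if $f$ is a monomorphism and $M\in\rrep(Q)$, the first lemma gives $N\in\rrep(Q)$; if $f$ is a monomorphism and $N\in\rrep(Q)$, the second lemma gives $M\in\rrep(Q)$; and the two dual lemmas (obtained by applying the pointwise duality $D_Q$ and using that $\rrep(Q)$ is self-dual with the roles of $\rep^+(Q)$ and $\rep^-(Q)$ swapped) treat the epimorphism case symmetrically. In every branch, both $M$ and $N$ lie in $\rrep(Q)$, which already gives the first conclusion that the morphism lies entirely in $\rrep(Q)$.

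Next I would verify that $f$ is irreducible in the full subcategory $\rrep(Q)$. This is the routine step: any factorization $f=gh$ through an object $L\in\rrep(Q)$ is in particular a factorization in $\rep(Q)$, so the irreducibility of $f$ in $\rep(Q)$ forces $h$ to be a section or $g$ to be a retraction in $\rep(Q)$, and since $\rrep(Q)$ is full in $\rep(Q)$, a one-sided inverse of a morphism between objects of $\rrep(Q)$ is automatically a morphism in $\rrep(Q)$. Thus $f$ is irreducible in $\rrep(Q)$, and Proposition \ref{irred} applies, yielding that either $M\in\rep^-(Q)$ or $N\in\rep^+(Q)$ as required.

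The only genuine content here is the invocation of the two preceding lemmas and their duals; the main obstacle in the argument was in fact already overcome in those lemmas (in particular, ruling out the pathological case where $M$ lies in $\rrep(Q)$ but $N$ has, say, an infinitely generated projective part using the pushout/section construction at simple tops of $y_i$). Once those lemmas are in hand, the present proposition is a purely formal corollary combining them with Proposition \ref{irred}.
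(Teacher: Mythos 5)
Your proposal is correct and follows exactly the route the paper intends: the paper gives no written proof beyond the remark that ``the preceding lemmas with their dual versions and Proposition \ref{irred}'' yield the result, and your argument simply fills in the implicit steps (irreducible maps between indecomposables are mono or epi, the two lemmas and their $D_Q$-duals put both endpoints in $\rrep(Q)$, and irreducibility descends to the full subcategory so that Proposition \ref{irred} applies).
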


Therefore, we have proven the promised main result of this section.

\begin{Theo}
Any Auslander-Reiten component of $\Ga_{\rreps(Q)}$ is a connected
component of $\Ga_{\rep(Q)}$.
\end{Theo}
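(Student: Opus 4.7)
The plan is to verify three things about a given connected component $\Gamma$ of $\Ga_{\rreps(Q)}$: that its vertex set and arrows embed correctly into $\Ga_{\rep(Q)}$, that the translation agrees on both sides, and that no arrow of $\Ga_{\rep(Q)}$ escapes or enters $\Gamma$ from outside. The bulk of the work has already been done by the preceding results, so the argument should be essentially formal, assembling Lemma \ref{lemmairr}, Proposition \ref{propAR}, and the Proposition immediately above the theorem.

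First I would fix $\Gamma$ and observe that each vertex $X \in \Gamma$, being indecomposable in $\rrep(Q)$, is also indecomposable in $\rep(Q)$: this is because $\rrep(Q)$ is a full subcategory closed under direct summands by Proposition \ref{closedext}, so the local endomorphism algebra of $X$ computed in $\rrep(Q)$ coincides with the one computed in $\rep(Q)$. Next, every arrow $X \to Y$ of $\Gamma$ comes from an irreducible morphism in $\rrep(Q)$, which is irreducible in $\rep(Q)$ by Lemma \ref{lemmairr}; hence the arrow persists in $\Ga_{\rep(Q)}$. For the translation, any almost split sequence ending in a non-projective $X \in \Gamma$ is almost split in $\rrep(Q)$ and so, by Proposition \ref{propAR}, is almost split in $\rep(Q)$, giving $\tau_{\rep(Q)} X = \tau_{\rreps(Q)} X$. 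The same check applies to minimal right/left almost split morphisms into/out of relative projectives and injectives using the description of irreducible maps in the Auslander-Reiten sequences.

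The main step, and the only one where something could go wrong, is closure of $\Gamma$ under edges of $\Ga_{\rep(Q)}$. Suppose $M \in \Gamma$ and there is an arrow $M \to N$ (respectively $N \to M$) in $\Ga_{\rep(Q)}$ with $N$ indecomposable in $\rep(Q)$; then there is an irreducible morphism $f$ with one endpoint in $\rrep(Q)$. By the Proposition immediately preceding the theorem, $f$ lies entirely in $\rrep(Q)$ and in particular $N \in \rrep(Q)$. A factorization of $f$ inside $\rrep(Q)$ is a fortiori a factorization inside $\rep(Q)$, so irreducibility of $f$ in $\rep(Q)$ forces irreducibility of $f$ in $\rrep(Q)$. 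Consequently there is an arrow $M \to N$ (respectively $N \to M$) in $\Ga_{\rreps(Q)}$, and $N$ must belong to the same connected component $\Gamma$. This shows $\Gamma$ is both closed and open in $\Ga_{\rep(Q)}$ and therefore is a connected component of it.

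The only subtlety I anticipate is making sure that the valuation (or the multiplicity replacing a symmetric valuation) on each arrow is the same in both Auslander-Reiten quivers, but this is automatic because $\mathrm{irr}(X,Y) = \mathrm{rad}(X,Y)/\mathrm{rad}^2(X,Y)$ is computed identically in $\rrep(Q)$ and in $\rep(Q)$: by the preceding Proposition any radical factorization in $\rep(Q)$ between objects of $\rrep(Q)$ can be replaced by one whose intermediate object is in $\rrep(Q)$, so the two radical ideals agree on $\rrep(Q)$, and the same is true for their squares. With this, the identification of the two sets of arrows (and their valuations) at each pair of vertices in $\Gamma$ is complete, finishing the proof.
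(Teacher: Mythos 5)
Your proposal is correct and follows essentially the same route as the paper, which deduces the theorem immediately from Lemma \ref{lemmairr}, Proposition \ref{propAR}, and the proposition directly preceding the theorem (itself obtained from the two lemmas of Section 6 and their duals); you have simply spelled out the formal assembly of these ingredients. Your extra remark on valuations goes slightly beyond what the paper records here, and its justification via agreement of $\mathrm{rad}^2$ is a bit loose (the cited proposition concerns irreducible morphisms, not arbitrary radical factorizations), but this does not affect the validity of the main argument.
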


By Proposition \ref{propAR}, any other connected component of
$\Ga_{\rep(Q)}$ is such that the translation $\tau$ is nowhere
defined.

\begin{Exam}
Let $Q$ be the quiver
$$\xymatrixcolsep{10pt}\xymatrixrowsep{2pt}\xymatrix{& 5\ar@{.}[dl] \ar[dr] && 3\ar[dl] \ar[dr] && 1\ar[dl] \ar[dr]
& \\
&& 4 && 2 && 0}$$ of the last example. For $i \ge 0$, let $M_i$ be
the indecomposable representation of $\rep(Q)$ such that $M(j)=k$
for all $j \ge i$ and $M(j)=0$, otherwise. Since $Q$ has no infinite
path, $\rrep(Q) = \rep^b(Q)$ are the finite dimensional
representations. The only indecomposable infinite dimensional
representations of $\rep(Q)$, up to isomorphisms, are the $M_i$. The
only connected component of $\Ga_{\rep(Q)}$ which is not a connected
component of $\Ga_{\rreps(Q)}$ is the following component with
trivial valuations:
$$\xymatrixrowsep{10 pt}\xymatrixcolsep{10 pt}\xymatrix{\cdots\ar[r] & M_4 \ar[r] &  M_2 \ar[r]  & M_0\ar[r]
 &  M_1 \ar[r] & M_3\ar[r] & M_5 \ar[r]   & \cdots \\
}
 $$
\end{Exam}

We end this paper with the following conjecture.

\begin{Conj}
Let $Q$ be a strongly locally finite quiver.  The connected
components of $\Ga_{\rep(Q)}$ containing representations not in
$\rrep(Q)$ are connected subquivers of the quiver
$$\cdots \to \circ \to \circ \to \circ \to \cdots$$
and all have trivial valuations.
\end{Conj}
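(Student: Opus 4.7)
Let $\Gamma$ be a connected component of $\Ga_{\rep(Q)}$ containing an indecomposable $M_0 \notin \rrep(Q)$. The first step is to observe, using the proposition immediately preceding the theorem, that every vertex of $\Gamma$ lies outside $\rrep(Q)$: if some arrow of $\Gamma$ joined an object outside $\rrep(Q)$ to one inside, that proposition would force both endpoints into $\rrep(Q)$. Combining this with Proposition \ref{propAR}, no almost split sequence of $\rep(Q)$ has a term in $\Gamma$, so the translation $\tau$ is nowhere defined on $\Gamma$.

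Next I would classify the irreducible morphisms in $\Gamma$. Because $\rep(Q)$ is hereditary, any irreducible $f\colon M\to N$ between indecomposables is either a monomorphism or an epimorphism; one sees this by factoring $f$ through its image and invoking irreducibility. When $f$ is a monomorphism, Corollary \ref{Corirred} guarantees that $\Coker f$ is finitely generated indecomposable; dually, when $f$ is an epimorphism, $\Ker f$ is finitely cogenerated indecomposable. In particular the endomorphism rings of these kernels/cokernels are local, so up to scalar there is, in a natural sense, ``one'' such short exact sequence at each step. I would then analyze extensions of finite-dimensional indecomposables by $M$ to bound $\dim_k \Ext(C,M)$ at each such cokernel $C$, with the aim of showing this dimension is $1$ (yielding trivial valuation on each arrow of $\Gamma$).

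The central structural claim is the absence of branching: each $M\in\Gamma$ admits at most one irreducible morphism going out and at most one coming in (up to scalar). To prove this I would assume two irreducible morphisms $M\to N_1$ and $M\to N_2$ with $N_1,N_2\in\Gamma$ and argue by a factorization: using that any composition of two monomorphisms out of $M$ whose cokernels have compatible filtrations must factor through an intermediate sub-representation, I would construct a proper factorization $M\to L\to N_i$ in which neither piece is a section or retraction, contradicting irreducibility. The dual argument handles irreducible epimorphisms. Combining this with the hereditary alternative (mono or epi), one then concludes that $\Gamma$ is a linearly ordered translation-free quiver whose underlying graph is a connected subquiver of
$$\cdots \to \circ \to \circ \to \circ \to \cdots,$$
all of whose arrows carry trivial valuation.

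\textbf{Main obstacle.} The crux of the plan, and the reason the statement is only conjectural, is establishing the no-branching property without an Auslander-Reiten translate to lean on. In every component of $\Ga_{\rreps(Q)}$ the translation and the associated meshes force the local shape of arrows, but here $\tau$ is undefined on $\Gamma$, so the argument must be carried out purely at the level of short exact sequences in $\rep(Q)$. The example in the text shows that cokernels of irreducible monomorphisms in $\Gamma$ need not be simple (they can be arbitrarily large finite-dimensional indecomposables), so the usual simple-top/socle tricks are unavailable. A genuine proof therefore seems to require a structural description of the indecomposables of $\rep(Q)\setminus\rrep(Q)$ — presumably a string/band description tied to two-sided infinite paths in $Q$ — from which both the uniqueness of irreducible maps out (and in) and the triviality of the valuations would follow combinatorially. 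Producing such a classification is, I expect, the real heart of the conjecture.
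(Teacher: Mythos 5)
The statement you are addressing is stated in the paper as a \emph{Conjecture}; the paper offers no proof of it, so there is nothing to compare your argument against on the paper's side. What you have written is, by your own admission, a proof plan rather than a proof, and the plan leaves the entire substance of the conjecture open.

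Your preliminary reductions are sound and consistent with what the paper does establish. That every vertex of such a component $\Gamma$ lies outside $\rrep(Q)$ follows, as you say, from the proposition that an irreducible morphism between indecomposables with one endpoint in $\rrep(Q)$ lies entirely in $\rrep(Q)$; that $\tau$ is nowhere defined on $\Gamma$ is exactly the remark the paper makes after its final theorem, via Proposition \ref{propAR}. The mono/epi dichotomy for irreducible morphisms and the finite generation of the cokernel (Corollary \ref{Corirred}) are also correctly invoked. But none of this touches the two claims that constitute the conjecture: that each vertex of $\Gamma$ has at most one arrow in and one arrow out (so that $\Gamma$ is a subquiver of a doubly infinite linear quiver), and that every valuation is trivial. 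Your paragraph on the ``central structural claim'' describes a strategy (a factorization $M \to L \to N_i$ contradicting irreducibility) without constructing $L$ or explaining why neither factor would be a section or retraction; your remark on bounding $\dim_k \Ext(C,M)$ is likewise only an intention. You correctly identify that the missing ingredient is a structural classification of the indecomposables of $\rep(Q)$ outside $\rrep(Q)$ — that is precisely why the statement is a conjecture and not a theorem. In short: your writeup is an accurate account of the state of the problem, but it is not a proof, and it should not be presented as one.
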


\end{document}